\newtheorem{thm}{Theorem}[section]
\newtheorem{lem}[thm]{Lemma}
\newtheorem{prop}[thm]{Proposition}
\newtheorem{defn}[thm]{Definition}
\newtheorem{theorem}{Theorem}[section]
\newtheorem{remark}[theorem]{Remark}
\newtheorem{corollary}[theorem]{Corollary}
\newtheorem{example}[theorem]{Example}
\let \bb=\mathbb{}
\def\rm{\textrm{}}
\def\fudge{\mathchoice{}{}{\mkern.5mu}{\mkern.8mu}}
\def\bbc#1#2{{\rm \mkern#2mu\vbar\mkern-#2mu#1}}
\def\bbb#1{{\rm I\mkern-3.5mu #1}}
\def\bba#1#2{{\rm #1\mkern-#2mu\fudge #1}}
\def\bb#1{{\count4=`#1 \advance\count4by-64 \ifcase\count4\or\bba
A{11.5}\or \bbb B\or\bbc C{5}\or\bbb D\or\bbb E\or\bbb F \or\bbc
G{5}\or\bbb H\or \bbb I\or\bbc J{3}\or\bbb K\or\bbb L \or\bbb
M\or\bbb N\or\bbc O{5} \or \bbb P\or\bbc C{5}\or\bbb B\or\bbc
S{4.2}\or\bba T{10.5}\or\bbc U{5}\or \bba V{12}\or\bba
W{16.5}\or\bba X{11}\or\bba Y{11.7}\or\bba Z{7.5}\fi}}
\begin{document}

\title{\vspace{-15mm}\fontsize{17pt}{10pt}\selectfont\textbf{ Morphisms Cohomology and Deformations
of  Hom-algebras}} 

\author{
\textsc{Anja Arfa\thanks{arfaanja.mail@gmail.com}}\\[2mm] 
\normalsize Universit\'e de Sfax (Tunisia) \\
\normalsize Facult\'{e} des Sciences \\ 
\vspace{-5mm}
\and
\textsc{Nizar Ben Fraj\thanks{benfraj\_nizar@yahoo.fr}}\\[2mm] 
\normalsize Universit\'{e} de Carthage (Tunisia) \\
\normalsize Institut Pr\'eparatoire aux Etudes d'Ing\'enieur de Nabeul\\ 
\vspace{-5mm}
\and
\textsc{Abdenacer Makhlouf\thanks{abdenacer.makhlouf@uha.fr}}\\[2mm] 
\normalsize Universit\'e de Haute Alsace (France) \\
\normalsize Laboratoire de Math\'ematiques, Informatique et Applications\\ 
\vspace{-5mm}
}


\date{}
\maketitle

\begin{abstract}
The purpose of this paper is to study  deformation theory
of
Hom-associative algebra morphisms and Hom-Lie algebra morphisms. We introduce a suitable cohomology and discuss  Infinitesimal deformations, equivalent deformations and obstructions. Moreover, we provide some examples.
\end{abstract}
\bigskip
\thispagestyle{empty}

  \section*{Introduction}

The first instance of Hom-type algebras appeared in Physics literature when looking for quantum deformations of some algebras of vector fields, like Witt and Virasoro algebras, in connection with oscillator algebras. A quantum deformation consists of replacing the usual derivation by a $\sigma$-derivation. The main examples use Jackson derivation, it turns out that  the obtained algebras no longer satisfy Jacobi identity but  a modified version involving a homomorphism.
 These algebras
were called Hom-Lie algebras and studied by Hartwig, Larsson and Silvestrov in \cite{JDS} and \cite{DS}.
Hom-associative algebras play the role of associative algebras in the Hom-Lie
setting.
They were introduced by the last author  and Silvestrov in \cite{ms}, where it is shown
that the commutator bracket defined by the multiplication in a Hom-associative
algebra leads naturally to a Hom-Lie algebra. The adjoint functor was considered by D. Yau \cite{D2}. Usually, we call these type of algebras Hom-algebras because of the homomorphism involving in their structure.

The original deformation theory  was developed
by Gerstenhaber for ring and algebras using formal power series
in \cite{m}. It  is closely related to Hochschild cohomology. Then, it was extended to Lie algebras, using Chevalley-Eilenberg cohomology, by Nijenhuis and Richardson  \cite{ar}. Notice that a more general setting was considered by Fialowski and her collaborators in \cite{Fialow}.
Deformation theory of associative algebra morphisms have been   studied by Gerstenhaber and Schack in a series of papers \cite{ms1,ms2,ms3}. Deformations of Lie algebra morphisms have been considered  by
 Nijenhuis and Richardson  in \cite{ar},
and more recently  by Fr\'egier \cite{f}, see also \cite{fmy,f2}. Cohomology and Deformations of Hom-associative algebras and Hom-Lie algebras were studied first in \cite{ms} then completed in \cite{aem}.

The purpose of this paper is to  provide  first  a Hochschild cohomology of Hom-associative
algebras and a Chevalley-Eilenberg cohomology  of Hom-Lie algebras which are compatible with
Hom-algebra morphisms and then study their deformations.
 We aim to generalize the cohomology theory associated
to deformations of Lie algebra morphisms given  by  Fr\'{e}gier for Lie algebras in \cite{f}
and that introduced by  Gerstenhaber and Schack in \cite{ms3} for associative
algebra morphisms. Moreover, we generalize the algebra valued cohomology theory in \cite{aem} to any  bimodule.

The paper is organized as follows. In Section $1$, we review some basic definitions about Hom-algebras and 
Hom-type Hochschild cohomology (resp. Hom-type Chevalley-Eilenberg cohomology).
In Section $2$, we introduce a cohomology complex with values in an adjoint bimodule related to
deformations of multiplicative Hom-associative algebras and give an explicit
formula for the coboundary operator.
In Section $3$, we construct a cohomology complex of
Hom-algebra morphisms $\phi:\mathcal{A}\rightarrow \mathcal{B}$ for both Hom-associative algebras and Hom-Lie algebras. We define the module of $n$-cochains
of a morphism $\phi$ by $C^{n}(\phi,\phi)=C^{n}(\mathcal{A},\mathcal{A})
\otimes C^{n}(\mathcal{B},\mathcal{B})\otimes C^{n-1}(\mathcal{A},\mathcal{B})$
and give the  coboundary operator formula related to these triples. The corresponding cohomology is denoted by $H^{*}(\mathcal{A},\mathcal{B})$. Moreover, we provide some examples.
 Section $4$ deals with  deformations of Hom-associative algebra
morphisms. We  generalize Gerstenhaber and Schack Theorems given in    \cite{ms1}.
We have observed that the infinitesimal is a $2$-cocycle in
the deformation complex of the Hom-algebra morphism. Also,
it is shown that if $H^{2}(\mathcal{A},\mathcal{B})=0$,
then  every formal deformation is equivalent to a trivial deformation. Furthermore, we prove
that the obstruction to extend a deformation of order $N$ to a deformation of order $N+1$
is a 3-cocycle. One can derive as a consequence that if $H^{3}(\mathcal{A},\mathcal{B})=0$ then any infinitesimal
deformation can be extended. The paper is ended with a section where we explicitly compute  for  several examples   a cohomology of Hom-Lie algebra morphisms and discuss some deformations.



  \section{Preliminaries}

In this  section, we recall some basic definitions and summarize
the Hom-type Hochschild cohomology and the Hom-type
Chevalley-Eilenberg cohomology. We refer to a Hom-algebra as a triple consisting of  a $\mathbb{K}$-vector  space or a module together with a bilinear map (a multiplication) and a linear map. We assume that $\mathbb{K}$ is an algebraically  closed field of characteristic 0, even if most of the result are valid for any field. These Hom-algebras aim to generalize classical algebraic structures and
the main feature  is that the identities
defining the structures are twisted by homomorphisms. In the sequel, we will write $\otimes$ for $\otimes_{\mathbb{K}}$,
$\mathcal{A}^{\otimes n}$ for the $n$-fold tensor product $\mathcal{A}
\otimes\cdots\otimes\mathcal{A}$ and $\mathcal{A}^{\times n}$ for $\mathcal{A}
\times\cdots\times\mathcal{A}$.

\begin{defn}
A Hom-associative algebra over $\mathcal{A}$ is a triple $(\mathcal{A},\mu,\alpha)$ consisting
of a $\mathbb{K}$-vector space $\mathcal{A}$, a bilinear map $\mu:\mathcal{A}\times \mathcal{A} \rightarrow \mathcal{A}$ and a linear map  $\alpha:\mathcal{A}\rightarrow \mathcal{A}$
satisfying
\begin{equation*}
\mu(\alpha(x),\mu(y,z))=\mu(\mu(x,y),\alpha(z)),~~\hbox{for all}~~x,y,z\in \mathcal{A}
\quad (\hbox{Hom-associativity}).
\end{equation*}
A Hom-associative algebra is called multiplicative
if $\alpha$ is an algebra morphism.
\end{defn}

\begin{defn}
A Hom-Lie algebra is a triple $(\mathcal{L},[\cdot, \cdot],\alpha)$ consisting of a $\mathbb{K}$-vector space $\mathcal{L}$, a bilinear map
$[\cdot, \cdot] :\mathcal{L}\times \mathcal{L} \rightarrow \mathcal{L}$ and a linear map  $\alpha :
\mathcal{L}\rightarrow \mathcal{L}$ satisfying
$$
[x,y]=-[y,x] ~\hbox{for all} ~x,y\in \mathcal{L}\quad \hbox{(skew-symmetry)},$$
$$\circlearrowleft_{x,y,z}[\alpha(x),[y,z]]=0 ~\hbox{for all}~ x,y,z\in \mathcal{L}\quad
\hbox{(Hom-Jacobi identity)},$$
 where $\circlearrowleft_{x,y,z}$
denotes summation over the cyclic permutation on $x, y, z$.
\end{defn}
\begin{defn}
Let $(\mathcal{A},\mu,\alpha)$ and $(\mathcal{A}',\mu',\alpha')~~
(\hbox{resp.}~(\mathcal{L},[\cdot, \cdot],\alpha) ~
\hbox{and}~(\mathcal{L}',[\cdot, \cdot]',\alpha'))$ be two Hom-associative
$(\hbox{resp. ~Hom-Lie })$ algebras. A linear map $\phi:\mathcal{A}\rightarrow
\mathcal{A}'$ (resp.  $\phi:\mathcal{L}\rightarrow
\mathcal{L}'$) is a
Hom-associative (resp. Hom-Lie) algebra morphism if
$$\mu'\circ(\phi\otimes\phi)=\phi\circ\mu~~(\hbox{resp. } [\cdot, \cdot]'\circ
(\phi\otimes\phi)=\phi\circ[\cdot, \cdot])
\quad\hbox{and}\quad\phi\circ\alpha=\alpha'\circ\phi.$$
\end{defn}

\begin{thm}[\cite{D1}]
Let $A=(A,\mu)$ be an associative algebra (resp. a Lie algebra) and $\alpha:A\rightarrow A$ be an algebra  morphism  with respect to
$\mu$, i.e. $\alpha\circ\mu=\mu\circ\alpha^{\otimes 2}$. Then $A_{\alpha}=(A,\mu_{\alpha}=\alpha\circ\mu,\alpha)$
is a Hom-associative algebra (resp. a Hom-Lie algebra).
\end{thm}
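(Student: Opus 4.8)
The plan is to verify directly the defining identity of each Hom-structure for $A_\alpha$, treating $\alpha$ purely as a book-keeping twist and exploiting that it commutes with the multiplication in the sense $\alpha\circ\mu=\mu\circ\alpha^{\otimes 2}$, i.e. $\alpha(\mu(a,b))=\mu(\alpha(a),\alpha(b))$ for all $a,b\in A$ (and the analogous morphism property for the bracket in the Lie case).

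For the associative case I would start from the left-hand side $\mu_\alpha(\alpha(x),\mu_\alpha(y,z))$, expand $\mu_\alpha=\alpha\circ\mu$ in both slots, and then use the morphism hypothesis repeatedly to push every copy of $\alpha$ inward across $\mu$ until both outer applications of $\alpha$ have landed on the arguments. This migration collapses the expression to $\mu\bigl(\alpha^2(x),\mu(\alpha^2(y),\alpha^2(z))\bigr)$. Performing the symmetric computation on the right-hand side $\mu_\alpha(\mu_\alpha(x,y),\alpha(z))$ yields $\mu\bigl(\mu(\alpha^2(x),\alpha^2(y)),\alpha^2(z)\bigr)$. These two quantities agree precisely by the ordinary associativity of $\mu$ applied to the triple $\bigl(\alpha^2(x),\alpha^2(y),\alpha^2(z)\bigr)$, which is exactly Hom-associativity for $A_\alpha$.

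For the Lie case, skew-symmetry of $[\cdot,\cdot]_\alpha=\alpha\circ[\cdot,\cdot]$ is immediate from linearity of $\alpha$ together with skew-symmetry of the original bracket. For the Hom-Jacobi identity I would evaluate a single cyclic term $[\alpha(x),[y,z]_\alpha]_\alpha$, expanding the twisted bracket and using that $\alpha$ is a Lie morphism, $\alpha([a,b])=[\alpha(a),\alpha(b)]$, to move both twists onto the arguments; the term reduces to $[\alpha^2(x),[\alpha^2(y),\alpha^2(z)]]$. Summing over the cyclic permutations of $(x,y,z)$ and writing $X=\alpha^2(x)$, $Y=\alpha^2(y)$, $Z=\alpha^2(z)$, the total becomes the ordinary Jacobi sum $\circlearrowleft_{X,Y,Z}[X,[Y,Z]]$, which vanishes by the Jacobi identity in the untwisted Lie algebra.

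There is no deep difficulty here: the entire content is bookkeeping, and the result is a direct verification. The one point demanding care — the only place a computation could slip — is ensuring that the morphism property is applied to \emph{both} nestings of $\alpha$ on each side, so that after migration the classical associative (resp. Jacobi) identity is invoked on arguments twisted by exactly $\alpha^2$ rather than a mismatched power. Once that symmetry is respected, both verifications are one-line reductions to the classical identities.
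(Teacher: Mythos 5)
Your verification is correct, and it is essentially the only proof this statement admits: the paper itself gives no argument here, simply citing Yau \cite{D1}, and your direct computation is exactly the standard one. Pushing both twists inward across $\mu$ via $\alpha\circ\mu=\mu\circ\alpha^{\otimes 2}$ reduces each side of Hom-associativity to $\mu(\alpha^{2}(x),\mu(\alpha^{2}(y),\alpha^{2}(z)))$ and $\mu(\mu(\alpha^{2}(x),\alpha^{2}(y)),\alpha^{2}(z))$ respectively, which agree by classical associativity, and the same bookkeeping handles the Hom-Jacobi identity; equivalently one could migrate the twists outward to obtain $\alpha^{2}$ applied to the untwisted identity, which is the form of the computation in \cite{D1}, but the two routes are trivially equivalent. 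Your closing caution about matching powers of $\alpha$ on both sides is indeed the one place such a check can go wrong, and you have handled it correctly.
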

This  theorem provides  an easy way to deform a usual associative algebra (resp.  Lie algebra) to a Hom-associative algebra (resp. Hom-Lie algebra). It could be also generalized to provide a new Hom-associative algebra (resp. Hom-Lie algebra) from a given Hom-associative algebra (resp. Hom-Lie algebra) along an algebra morphism.
\subsection{Representations of Hom-associative algebras}
\begin{defn}
Let $(\mathcal{A},\mu,\alpha)$ be a Hom-associative algebra. A (left)
$\mathcal{A}$-module  is a triple $(M,f,\gamma)$ where $M$ is
$\mathbb{K}$-vector space and $f:M\rightarrow M$, $\gamma:\mathcal{A}\otimes M\rightarrow M$,
are $\mathbb{K}$-linear maps, such that the
following identity is satisfied
$$\gamma\circ(\mu\otimes f)=\gamma\circ(\alpha\otimes\gamma).$$
\end{defn}
\begin{prop}[Left adjoint $\mathcal{A}$-module]\label{left}
Let $(\mathcal{A},\mu,\alpha)$ and $(\mathcal{A}',\mu',\alpha')$
be two Hom-associative algebras and $\phi:\mathcal{A}\rightarrow\mathcal{A}'$
be a Hom-associative algebra morphism.\\
We consider the triple $(M,f,\gamma)$, where $M=\mathcal{A}'$, $\gamma=\rho_{l}=\mu'
(\phi\otimes id)$ and   $f=\alpha'$. Then $M$ is an
$\mathcal{A}$-module called adjoint representation of $\mathcal{A}$ induced
by the Hom-associative algebra morphism $\phi$.
\end{prop}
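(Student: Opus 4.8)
The plan is to check directly that the triple $(M,f,\gamma)=(\mathcal{A}',\alpha',\rho_l)$ satisfies the single defining identity of a left $\mathcal{A}$-module, namely
$$\gamma\circ(\mu\otimes f)=\gamma\circ(\alpha\otimes\gamma),$$
with $\gamma=\rho_l=\mu'\circ(\phi\otimes id)$ and $f=\alpha'$. Since every map in sight is $\mathbb{K}$-linear, it is enough to evaluate both sides on a generator $x\otimes y\otimes m$, where $x,y\in\mathcal{A}$ and $m\in M=\mathcal{A}'$, and compare.

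First I would unwind the left-hand side. Applying $\mu\otimes f$ sends $x\otimes y\otimes m$ to $\mu(x,y)\otimes\alpha'(m)$, and then $\gamma=\rho_l$ yields $\mu'\bigl(\phi(\mu(x,y)),\alpha'(m)\bigr)$. At this point I would invoke the morphism condition $\phi\circ\mu=\mu'\circ(\phi\otimes\phi)$ to replace $\phi(\mu(x,y))$ by $\mu'(\phi(x),\phi(y))$, so that the left-hand side becomes $\mu'\bigl(\mu'(\phi(x),\phi(y)),\alpha'(m)\bigr)$.

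For the right-hand side, $\alpha\otimes\gamma$ sends $x\otimes y\otimes m$ to $\alpha(x)\otimes\mu'(\phi(y),m)$, and a further application of $\gamma$ gives $\mu'\bigl(\phi(\alpha(x)),\mu'(\phi(y),m)\bigr)$. Here the second morphism condition $\phi\circ\alpha=\alpha'\circ\phi$ turns $\phi(\alpha(x))$ into $\alpha'(\phi(x))$, so the right-hand side becomes $\mu'\bigl(\alpha'(\phi(x)),\mu'(\phi(y),m)\bigr)$.

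The crux---indeed the only substantive step---is to reconcile these two expressions, and this is precisely where Hom-associativity of $(\mathcal{A}',\mu',\alpha')$ enters. Applying it to the triple $\phi(x),\phi(y),m\in\mathcal{A}'$ gives
$$\mu'\bigl(\alpha'(\phi(x)),\mu'(\phi(y),m)\bigr)=\mu'\bigl(\mu'(\phi(x),\phi(y)),\alpha'(m)\bigr),$$
whose right-hand side is exactly the form to which I reduced the left-hand side of the module axiom. Hence both sides of the module identity coincide, which completes the verification. I expect no genuine obstacle here: the entire argument is a short computation whose sole nontrivial ingredient is Hom-associativity in $\mathcal{A}'$, the two compatibility relations defining $\phi$ serving only to bring the arguments into the shape where that identity can be applied.
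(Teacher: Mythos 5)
Your proof is correct and is essentially the paper's argument carried out in full: the paper's proof merely asserts the commutativity of the diagram encoding $\rho_{l}\circ(\mu\otimes\alpha')=\rho_{l}\circ(\alpha\otimes\rho_{l})$, and your element-wise computation --- using $\phi\circ\mu=\mu'\circ(\phi\otimes\phi)$, $\phi\circ\alpha=\alpha'\circ\phi$, and Hom-associativity of $(\mathcal{A}',\mu',\alpha')$ applied to $\phi(x),\phi(y),m$ --- is exactly the verification the paper leaves implicit. No gaps; you correctly identify Hom-associativity in $\mathcal{A}'$ as the sole substantive ingredient.
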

\begin{proof}
Indeed, for
$ \rho_{l}: \mathcal{A}\times\mathcal{A}'\rightarrow\mathcal{A}'$ such that
$(a,a')\rightarrow \mu'(\phi(a),a'),
$
we prove that the following diagram commutes
$$
\xymatrix{
\mathcal{A}\otimes\mathcal{A}\otimes\mathcal{A}'
\ar[r]^{\; \; \;  \;  \mu\otimes\alpha'} \ar[d] _{\alpha\otimes
\rho_{l}} & \mathcal{A}\otimes\mathcal{A}'\ar[d]^{\rho_{l}} \\
\mathcal{A}\otimes\mathcal{A}' \ar[r] ^{\rho_{l}} & \mathcal{A}'
}
$$
\end{proof}
\begin{remark}\label{right} Similarly, we define the right adjoint module by considering the triple
$(\mathcal{A}',\rho_{r}=\mu'(id\otimes\phi),\alpha')$.
These
$\mathcal{A}$-modules are also called  adjoint representations.
\end{remark}
A bimodule structure is given by  left and right  actions with maps $\rho_{r}$ and $\rho_{l}$
 that satisfy the following additional condition
$\rho_{r}(\rho_{l}(x,y),\alpha(z))=\rho_{l}(\alpha(x),\rho_{r}(y,z)).$
Actually,  left and right modules are special cases of bimodules, one may set
$\rho_{r}=0 $ (resp. $\rho_{l}=0$).
\subsection{Representation of Hom-Lie algebras}
Let $(\mathcal{L},[\cdot, \cdot],\alpha)$ be a Hom-Lie algebra and
$\beta\in\mathfrak{g}l(V)$ be an arbitrary linear self map
on $V$, where $V$ is an arbitrary vector space. We denote  a left action
of $\mathcal{L}$ on $V$ by the following bracket
$
  [\cdot, \cdot]_{V}:\mathcal{L}\times V\rightarrow V$ such that
 $ (g,V)\rightarrow[g,v]_{V}$.

\begin{defn}
A triple $(V,[\cdot, \cdot]_{V},\beta)$ is called a left Hom-module
on the Hom-Lie algebra $\mathcal{L}$ or $\mathcal{L}$-Hom-module
$V$, with respect to $\beta\in\mathfrak{g}l(V)$ if it satisfies
$[\alpha(u),\beta(v)]_{V}=\beta([u,v]_{V})$
and
\begin{equation}\label{condition}
  [[u,v],\beta(z)]_{V}=[\alpha(u),[v,z]_{V}]_{V}-[\alpha(v),[u,z]_{V}]_{V}.
\end{equation}
We say that $(V,[\cdot, \cdot]_{V},\beta)$ is a representation of $\mathcal{L}$.
\end{defn}
\begin{prop}\label{representation}
Let $(\mathcal{L},[\cdot, \cdot],\alpha)$ and $(\mathcal{L}',[\cdot, \cdot]',\alpha')$ be
two Hom-Lie algebras and $\phi:\mathcal{L}\rightarrow\mathcal{L}'$
be a Hom-Lie algebra morphism.
Let  $(\Pi,\beta,\rho_{l})$ be a triple  where $\Pi=\mathcal{L}', \beta=\alpha',
\rho_{l}=[\cdot, \cdot]_{V}=[\phi, id]'$. Then, $\Pi$ is a left $\mathcal{L}$-module
called left adjoint representation of $\mathcal{L}$ via $\phi$.
\end{prop}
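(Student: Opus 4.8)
The plan is to check directly that the triple $(\Pi,\beta,\rho_{l})=(\mathcal{L}',\alpha',[\phi,\mathrm{id}]')$ satisfies the two axioms of a left Hom-module from the preceding definition: the intertwining condition $[\alpha(u),\beta(v)]_{V}=\beta([u,v]_{V})$ and the action identity \eqref{condition}. Writing the action as $[u,v]_{V}=[\phi(u),v]'$ for $u\in\mathcal{L}$ and $v\in\Pi=\mathcal{L}'$, the verification will rest on the three defining properties of the morphism $\phi$, namely $[\phi(x),\phi(y)]'=\phi([x,y])$ and $\phi\circ\alpha=\alpha'\circ\phi$, together with the multiplicativity of $\alpha'$ on $\mathcal{L}'$.

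For the intertwining condition I would substitute $\beta=\alpha'$ and unfold both sides. The left-hand side becomes $[\phi(\alpha(u)),\alpha'(v)]'$, which equals $[\alpha'(\phi(u)),\alpha'(v)]'$ by $\phi\circ\alpha=\alpha'\circ\phi$; the right-hand side is $\alpha'([\phi(u),v]')$, which multiplicativity of $\alpha'$ turns into $[\alpha'(\phi(u)),\alpha'(v)]'$ as well. Hence the two sides agree and the first axiom holds.

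The substantive step is the action identity \eqref{condition}, and this is where I expect the real content to lie. Unfolding its left-hand side and using that $\phi$ is a bracket morphism gives $[\phi([u,v]),\alpha'(z)]'=[[\phi(u),\phi(v)]',\alpha'(z)]'$, while the two terms on the right become $[\alpha'(\phi(u)),[\phi(v),z]']'$ and $[\alpha'(\phi(v)),[\phi(u),z]']'$ after applying $\phi\circ\alpha=\alpha'\circ\phi$. Putting $a=\phi(u)$, $b=\phi(v)$ and $c=z$, the claim reduces to the purely internal identity in $\mathcal{L}'$
$$[[a,b]',\alpha'(c)]'=[\alpha'(a),[b,c]']'-[\alpha'(b),[a,c]']'.$$
The remaining task is to recognise this as a reformulation of the Hom-Jacobi identity $\circlearrowleft_{a,b,c}[\alpha'(a),[b,c]']'=0$: isolating the summand $[\alpha'(c),[a,b]']'$, rewriting it by skew-symmetry as $-[[a,b]',\alpha'(c)]'$, and using $[c,a]'=-[a,c]'$ yields exactly the displayed equation. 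This identifies \eqref{condition} with the Hom-Jacobi identity of $\mathcal{L}'$ and finishes the verification; no obstruction beyond the bookkeeping of these substitutions arises.
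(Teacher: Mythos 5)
Your verification is correct and is exactly the direct check that the paper leaves to the reader (the proposition is stated there without proof): the intertwining axiom reduces, via $\phi\circ\alpha=\alpha'\circ\phi$, to the multiplicativity of $\alpha'$ --- which you rightly flag as the hypothesis being used, and which is a standing assumption in this paper, where the Hom-algebras entering the cohomology constructions are taken multiplicative --- while the identity \eqref{condition} reduces, after the substitution $a=\phi(u)$, $b=\phi(v)$, $c=z$, to the Hom-Jacobi identity of $(\mathcal{L}',[\cdot,\cdot]',\alpha')$ rearranged by skew-symmetry, precisely as in your last display. Nothing further is needed.
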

\begin{remark}
[Coadjoint representation of $\mathcal{L}$]
Let $(\mathcal{L},[\cdot, \cdot],\alpha)$ be a Hom-Lie algebra and
$(V,[\cdot, \cdot]_{V},\beta)$ be a representation of $\mathcal{L}$. Let
$V^{*}$ be the dual vector space of $V$. We define a  bilinear
map $[\cdot, \cdot]_{V^{*}}:\mathcal{L}\times V^{*}\rightarrow V^{*}$ by
$[x,f]_{V^{*}}(v)=-f([x,v]_{V}), \forall x\in \mathcal{L}, f\in V^{*} \hbox{ and } v\in V.$
Let $f\in V^{*}$, $x,y\in\mathcal{L}$ and $v\in V$, we compute the right
hand side of the identity \eqref{condition}.
\begin{eqnarray*}
[\alpha(x),[y,f]_{V^{*}}]_{V^{*}}-[\alpha(y),[x,f]_{V^{*}}]_{V^{*}}(v)&=&
-[y,f]([\alpha(x),v]_{V})+[x,f]([\alpha(y),v]_{V})\\[1pt]
&=&f([y,[\alpha(x),v]_{V}]_{V})-f([x,[\alpha(y),v]_{V}]_{V}).
\end{eqnarray*}
On the other hand, we set  $\beta^{*}=^{t}\beta$ then the left
hand side of \eqref{condition} gives
\begin{eqnarray*}
[ [x,y],\beta^{*}(f)]_{V^{*}}(v)=-\beta^{*}(f)([[x,y],v]_{V})
 =-^{t}\beta(f)([[x,y],v]_{V})
 =-f\circ\beta([[x,y],v]_{V}).
\end{eqnarray*}
\end{remark}
\begin{prop}
Let $(\mathcal{L},[\cdot, \cdot],\alpha)$ be a Hom-Lie algebra
and $(V,[\cdot, \cdot]_{V},\beta)$ be a representation of $\mathcal{L}$.
The triple $(V^{*},[\cdot, \cdot]_{V^{*}},\beta^{*})$, where
$[x,f]_{V^{*}}(v)=-f([x,v]_{V}), \forall x \in\mathcal{L}, f\in V^{*}, v\in V$,
defines a representation of the Hom-Lie algebra $(\mathcal{L},[\cdot, \cdot],\alpha)$
if and only if
$$[[x,y],\beta(v)]_{V}=[x,[\alpha(y),v]_{V}]_{V}-[y,[\alpha(x),v]_{V}]_{V}.$$
\end{prop}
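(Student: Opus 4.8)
The plan is to verify the two axioms that, by the definition of a representation, the triple $(V^{*},[\cdot,\cdot]_{V^{*}},\beta^{*})$ must satisfy: the twisting compatibility $[\alpha(x),\beta^{*}(f)]_{V^{*}}=\beta^{*}([x,f]_{V^{*}})$ and the Hom-module identity \eqref{condition}, the latter being where the asserted equivalence lives. The single device I would use throughout is that an identity between elements of $V^{*}$ holds if and only if it holds after evaluation against every $v\in V$, since the evaluation pairing $V^{*}\times V\to\mathbb{K}$ is non-degenerate. Each such evaluation is then pushed through the defining formula $[x,f]_{V^{*}}(v)=-f([x,v]_{V})$ and the relation $\beta^{*}={}^{t}\beta$ until it becomes a statement purely about the action $[\cdot,\cdot]_{V}$ of $\mathcal{L}$ on $V$. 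This is exactly what lets me trade the a priori inaccessible dual identity for a concrete identity on $V$.

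Concretely, I would write \eqref{condition} for the candidate triple, namely
\[
[[x,y],\beta^{*}(f)]_{V^{*}}=[\alpha(x),[y,f]_{V^{*}}]_{V^{*}}-[\alpha(y),[x,f]_{V^{*}}]_{V^{*}},
\]
and evaluate both sides at an arbitrary $v\in V$. The right-hand side has already been expanded in the preceding remark, by two applications of the defining formula, to $f([y,[\alpha(x),v]_{V}]_{V})-f([x,[\alpha(y),v]_{V}]_{V})$; the left-hand side reduces, via $\beta^{*}={}^{t}\beta$, to $-f\bigl(\beta([[x,y],v]_{V})\bigr)$. Equating these for all $f\in V^{*}$ and removing the common functional $f$ by non-degeneracy of the pairing produces exactly the displayed relation on $V$. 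Because every step of this reduction is reversible, the two implications of the ``if and only if'' come out simultaneously, and no separate argument is needed for either direction.

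The one remaining point is the twisting axiom $[\alpha(x),\beta^{*}(f)]_{V^{*}}=\beta^{*}([x,f]_{V^{*}})$, which I would handle by the very same evaluate-and-strip method, rewriting each side as a functional at $v$ and comparing the resulting expressions on $V$. I expect the genuine difficulty of the whole argument to be purely in the bookkeeping of these reductions rather than in any idea: one must track the two sign reversals introduced by the dual bracket and keep straight which argument carries $\alpha$ and which carries $\beta$ through the transpose, since the coadjoint action intertwines the two maps. Once these signs and placements are controlled, the equivalence asserted in the proposition follows directly from the comparison already carried out in the remark.
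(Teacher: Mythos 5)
Your proposal is correct and follows essentially the same route as the paper: the paper's proof of this proposition is precisely the computation in the preceding remark, which evaluates both sides of \eqref{condition} for $(V^{*},[\cdot,\cdot]_{V^{*}},\beta^{*})$ on an arbitrary $v\in V$, expands via $[x,f]_{V^{*}}(v)=-f([x,v]_{V})$ and $\beta^{*}={}^{t}\beta$, and then strips the arbitrary functional $f$ using the separating pairing. One caveat worth recording: stripping $f$ literally yields $\beta([[x,y],v]_{V})=[x,[\alpha(y),v]_{V}]_{V}-[y,[\alpha(x),v]_{V}]_{V}$, which agrees with the proposition's display (whose left side is $[[x,y],\beta(v)]_{V}$) only up to the same silent identification the paper itself makes, so your claim that stripping produces ``exactly the displayed relation'' inherits the paper's imprecision rather than introducing a new gap; your explicit check of the twisting axiom $[\alpha(x),\beta^{*}(f)]_{V^{*}}=\beta^{*}([x,f]_{V^{*}})$ is a point the paper leaves implicit.
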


\subsection{Hom-type Hochschild Cohomology}
Let $(\mathcal{A},\mu,\alpha )$ be a multiplicative Hom-associative algebra.
We call a $n$-cochain of $\mathcal{A}$, with coefficients
in itself, any  $n$-linear map  from $\mathcal{A}^{\times n}$ to $\mathcal{A}$.
We  denote by $C^{n}_{Hom}(\mathcal{A},\mathcal{A})$ the space of $n$-cochains
defined to be the set of elements  $f\in Hom(\mathcal{A}^{\times n},\mathcal{A})$,
satisfying
$$\small{\alpha\circ f(x_{0},\cdots,x_{n-1})=f(\alpha(x_{0}),\cdots,
\alpha(x_{1}),\cdots,\alpha(x_{n-1}))~\hbox{for all}~x_{0},\cdots,x_{n-1}
\in\mathcal{A}}.$$
For $n=0$,  $ C^{0}_{Hom}(\mathcal{A},\mathcal{A})=\mathcal{A}$.
Let $f\in C^{n}_{Hom}(\mathcal{A},\mathcal{A})$, define
$\delta^{n}_{Hom}f\in C^{n+1}_{Hom}(\mathcal{A},\mathcal{A}) $ by
\begin{eqnarray}\label{hom}
&& \delta^{n}_{Hom}\varphi(x_{0},\cdots,x_{n})=\mu(\alpha^{n-1}(x_{0}),\varphi(x_{1},x_{2}
,\cdots,x_{n}))\\[1pt]\nonumber
&&+\sum\limits_{k=1}^{n}(-1)^{k}\varphi(\alpha(x_{0}),\alpha(x_{1}),\cdots,\alpha(x_{k-2}),
\mu(x_{k-1},x_{k}),\alpha(x_{k+1}),\cdots,\alpha(x_{n}))\\[1pt]\nonumber
&&+(-1)^{n+1}\mu(\varphi(x_{0},\cdots,x_{n-1}),\alpha^{n-1}(x_{n})).
\end{eqnarray}
So $(C^*_{Hom}(\mathcal{A},\mathcal{A})=\oplus_{n\geq 0}C^{n}_{Hom}(\mathcal{A},\mathcal{A}),\delta_{Hom})$ is a cochain complex defining a so called  Hom-type Hochschild
complex of $\mathcal{A}$ with coefficients in itself.


More generally, let $M$ be a vector space and $\beta:M\rightarrow M$ be a linear map.
We denote by $C_{\alpha,\beta}(\mathcal{A},M)$ the space of all
$(n+1)$-linear maps $\varphi:\mathcal{A}^{\times n+1}\rightarrow M$, satisfying
$$\small{\beta(\varphi(x_{0},\cdots,x_{n}))=\varphi(\alpha(x_{0}),\cdots,\alpha(x_{n}))
\quad\hbox{for all}~x_{0},\cdots,x_{n}\in \mathcal{A}}.$$
Let $M$ be a $\mathcal{A}$-bimodule.  For
$\psi\in C_{\alpha,\beta}^{b}(\mathcal{A},M)$ and $\varphi\in C_{\alpha}^{a}(\mathcal{A},\mathcal{A})$,
where $a\geq0, b\geq0$, we define $j^{\alpha}_{\varphi}\psi \in C^{a+b+1}_{\alpha}(\mathcal{A},M)$ to be
 the composition product  given by the operator
\begin{eqnarray}
&& j_{\varphi}^{\alpha}(\psi)(x_{0},\cdots,x_{a+b})=  \nonumber \\ &&  \sum\limits_{k=0}^{b}
(-1)^{ak}\psi(\alpha^{a}(x_{0}),\cdots,\alpha^{a}(x_{k-1}),\varphi(x_{k},\cdots,
x_{k+a}),
\alpha^{a}(x_{a+k+1}),\cdots,\alpha^{a}(x_{a+b})).\label{cup produit 1}
\end{eqnarray}
When $M=\mathcal{A}$, we set,
$$[\varphi,\psi]^{\Delta}_{\alpha}=j_{\psi}^{\alpha}(\varphi)
-(-1)^{ab}j_{\varphi}^{\alpha}(\psi).$$
The bracket $[\varphi,\psi]^{\Delta}_{\alpha}$ is called Gerstenhaber bracket. It satisfies
$j_{[\varphi,\psi]^{\Delta}_{\alpha}}=[j_{\varphi}^{\alpha},j^{\alpha}_{\psi}]$. Moreover,
if $(\mathcal{A},\mu,\alpha)$  is a Hom-algebra, then
$[\mu,\mu]^{\Delta}_{\alpha}=0$ if and only if $(\mathcal{A},\mu,\alpha)$ is a
Hom-associative algebra.
\begin{remark}\cite{aem}.
The pair $(C_{\alpha}(\mathcal{A},\mathcal{A}),[\cdot, \cdot]_{\alpha}^{\Delta})$ is
a graded Lie algebra.
\end{remark}
\subsection{Hom-type Chevalley-Eilenberg Cohomology}
\begin{defn}
Let $(\mathcal{L},[\cdot, \cdot],\alpha])$ be a Hom-Lie algebra.
We call a $n$-cochain of the  Hom-Lie  algebra $\mathcal{L}$,
with coefficients in itself, any $n$-linear alternating map
$f:\mathcal{L}^{\times n}\rightarrow \mathcal{L}$
satisfying
$$\small{\alpha\circ f(x_{0},\cdots,x_{n-1})=f(\alpha(x_{0}),\cdots,
\alpha(x_{1}),\cdots,\alpha(x_{n-1}))~\hbox{for all}~x_{0},\cdots,x_{n-1}
\in\mathcal{L}}.$$
The space of $n$-cochains is denoted by $C^{n}_{HL}(\mathcal{L},\mathcal{L})$. We set $C_{HL}(\mathcal{L},\mathcal{L})=\oplus_{n\geq 0} C^{n}_{HL}(\mathcal{L},\mathcal{L})$.\\
Let $\varphi\in C^{n}_{HL}(\mathcal{L},\mathcal{L})$, define $\delta^{n}\varphi
\in C^{n+1}_{HL}(\mathcal{L},\mathcal{L})$ by
\begin{eqnarray}\label{alpha}
\delta^{n}_{HL}\varphi(x_{0},\cdots,x_{n})&=&\sum\limits_{i=0}^{n}
(-1)^{i}[\alpha^{n-1}(x_{i})),\varphi (x_{0},\cdots,\widehat{x_{i}},\cdots,x_{n})]\\[2pt]
\nonumber
&+&\sum\limits_{0\leq i<j\leq n+1}(-1)^{i+j}\varphi([x_{i},x_{j}],\alpha(x_{0})
,\cdots,\widehat{x_{i}},\cdots,\widehat{x_{j}},\cdots,\alpha(x_{n})).
\end{eqnarray}
\end{defn}
Then, we have a cohomology  complex $(C^{*}_{HL}(\mathcal{L},\mathcal{L}),\delta)$ which we call
Hom-type Chevalley-Eilenberg complex.


More generally, let $A$ be a vector space and  $\alpha:A \rightarrow A$ be a linear map. Let $\varphi:A^{\times(n+1)}\rightarrow A$ be a $(n+1)$-linear alternating map satisfying
for all $x_{0},\cdots,x_{n}\in A,$
$\alpha(\varphi(x_{0},\cdots,x_{n}))=\varphi(\alpha(x_{0}),\cdots,\alpha(x_{n})).$
We  denote their set by $\widetilde{C}^{n}_{\alpha}(A,A)$  and
$$\widetilde{C}_{\alpha}(A,A)=\bigoplus_{n\geq-1}\widetilde{C}^{n}_{\alpha}(A,A).$$
We define the alternator $\lambda:C_{\alpha}(A,A)\rightarrow\widetilde{C}_{\alpha}
(A,A)$ by
$$(\lambda\varphi)(x_{0},\cdots,x_{a})=\frac{1}{(a+1)!}\sum_{\sigma\in\mathcal{S}_{a+1}}
\varepsilon(\sigma)\varphi(x_{\sigma(0)},\cdots,x_{\sigma(a)})~\text{for}~\varphi\in
C^{a}_{\alpha}(A,A),$$
where $\mathcal{S}_{a+1}$ is a permutation group and $\varepsilon(\sigma)$ is the signature of the permutation $\sigma$.

We define an operator and a bracket for $\varphi\in\widetilde{C}^{a}_{\alpha}(A,A)$,
 $\psi\in\widetilde{C}^{b}_{\alpha}(A,A)$ by
$$[\varphi,\psi]^{\wedge}_{\alpha}:=i_{\varphi}^{\alpha}(\psi)-
(-1)^{ab}i_{\psi}^{\alpha}(\varphi) \text{ where }
i_{\varphi}(\psi):=\frac{(a+b+1)!}{(a+1)!(b+1)!}\lambda(j_{\varphi}^{\alpha}\psi)
.$$
Thus $i_{\varphi}^{\alpha}(\psi)\in\widetilde{C}_{\alpha}^{a+b+1}$.
The bracket $ [\varphi,\psi]^{\wedge}_{\alpha}$ is called Nijenhuis-Richardson
bracket.
\begin{remark}\cite{aem}.
The pair $(\widetilde{C}_{\alpha}(A,A),[\cdot, \cdot]^{\wedge}_{\alpha})$
is a graded Lie algebra.
\end{remark}

\section{Cohomology of Hom-algebras with values in an adjoint bimodule}
The first and the second cohomology groups of Hom-associative algebras and
Hom-Lie algebras were introduced in \cite{ms4}. An $\mathcal{A}$-valued
cohomology complex were introduced for multiplicative Hom-algebras
in \cite{aem}.  The purpose of this section is to construct a
cochain complex cohomology for  multiplicative Hom-associative algebras (resp. Hom-Lie algebras)
with values in any $\mathcal{A}$-bimodule $M$ (resp. $\mathcal{L}$-left module $\Pi$).
\subsection{Cohomology of Hom-associative algebras with values in an adjoint $\mathcal{A}$-bimodule}

We  construct a cochain complex $C^{*}_{\alpha,\alpha'}(\mathcal{A},M)$ that defines a Hom-type Hochschild cohomology
for multiplicative Hom-associative algebras in an adjoint $\mathcal{A}$-bimodule $M$.
Let $(\mathcal{A},\mu,\alpha)$ and $(\mathcal{A}',\mu',\alpha')$ be two
Hom-associative algebras over $\mathbb{K}$ and $\phi:\mathcal{A}\rightarrow\mathcal{A}'$
be a Hom-associative algebra morphism.
Let $M=(\mathcal{A}',\rho_{l},\rho_{r})$ be a $\mathcal{A}$-bimodule, where
$\rho_{l}~\hbox{and}~\rho_{r}$ are defined in Proposition \ref{left} (resp. Remark \ref{right}).
Regard $\mathcal{A}'$ as a $\mathcal{A}$-bimodule via the adjoint representation
of $\mathcal{A}$ induced by $\phi$.\\
The set of $n$-cochains on $\mathcal{A}$ with values in a $\mathcal{A}$-bimodule
$M$, is defined to be the set of $n$-linear maps which are
 compatible with $\alpha$ and $\alpha'$ in the sense that $\alpha'\circ f=f\circ\alpha^{\otimes n}$ , i.e.
$$\alpha' \circ f(u_{1},\cdots, u_{n}) = f(\alpha(u_{1}),\cdots , \alpha(u_{n}))
\quad \hbox{for all}~~u_{1},\cdots,u_{n}\in\mathcal{A}.$$
We denote by $C^{n}_{Hom}(\mathcal{A},M)$ the set of $n$-linear maps from $\mathcal{A}$ to $M$
and  by $C^{n}_{\alpha,\alpha'}(\mathcal{A},M)$ the set of $n$-Hom-cochains:
$$C^{n}_{\alpha,\alpha'}(\mathcal{A},M)=\{f\in C^{n}_{Hom}(\mathcal{A},M): \alpha'\circ f=f\circ\alpha^{\otimes n}\}.$$
For $n=0$, we have $C^{0}_{\alpha,\alpha'}(\mathcal{A},M)=M$.
\begin{defn}
We call, for $n\geq1$, $n$-coboundary operator associated to the triple
$(\mathcal{A},M,\phi)$, the linear map $\delta^{n}_{Hom}:C^{n}_{\alpha,\alpha'}
(\mathcal{A},M)\rightarrow C^{n+1}_{\alpha,\alpha'}(\mathcal{A},M)$
defined by
\begin{eqnarray}\label{hom-morphism}
&& \delta^{n}_{Hom}\varphi(x_{0},x_{1},\cdots,x_{n})=
 \mu'(\phi(\alpha^{n-1}(x_{0}))),\varphi(x_{1},x_{2},\cdots,x_{n}))\\[2pt]
 \nonumber
  &&+\sum\limits_{k=1}^{n}(-1)^{k}\varphi(\alpha(x_{0}),\alpha(x_{1}),\cdots,
  \alpha(x_{k-2}),\mu(x_{k-1},x_{k}),\alpha(x_{k+1}),\cdots,\alpha(x_{n}))
 \\[2pt] \nonumber
 &&+(-1)^{n+1}\mu'(\varphi(x_{0},\cdots,x_{n-1}),\phi(\alpha^{n-1}(x_{n}))).
\end{eqnarray}
\end{defn}
\begin{lem}\label{lem}
Let $D_{i}:C^{n}_{\alpha,\alpha'}(\mathcal{A},M)\rightarrow C^{n+1}_{\alpha,\alpha'}
(\mathcal{A},M)$ be  linear operators defined for $\varphi\in C^{n}
 _{\alpha,\alpha'}(\mathcal{A},M)$ and $x_{0},x_{1},\cdots,x_{n}\in\mathcal{A}$ by
\small{\begin{equation*}\begin{array}{lllll}
&D_{0}^{n}\varphi(x_{0},x_{1},\cdots,x_{n})=-\mu'(\phi(\alpha^{n-1}(x_{0})),\varphi(x_{1},\cdots,x_{n}))
+\varphi(\mu(x_{0},x_{1}),\alpha(x_{2}),\cdots,\alpha(x_{n})),\\[2pt]
&D_{i}^{n}\varphi(x_{0},x_{1},\cdots,x_{n})=\varphi(\alpha(x_{0}),\cdots,\mu(x_{i},x_{i+1}),\cdots,
\alpha(x_{n}))\quad\hbox{for}\quad1\leq i\leq n-2,\\[2pt]
&D_{n-1}^{n}\varphi(x_{0},\cdots,x_{n})=\varphi(\alpha(x_{0}),\cdots,
\alpha(x_{n-2}),\mu(x_{n-1},x_{n}))-\mu'(\varphi(x_{0},\cdots,x_{n-1}),\phi(\alpha^{n-1}
(x_{n}))),\\[2pt]
&D_{i}^{n}\varphi=0\quad\hbox{for}\quad i\geq n.
\end{array}
\end{equation*}}
\end{lem}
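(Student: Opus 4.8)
\emph{Reading of the statement.} I read Lemma~\ref{lem} as introducing the partial (coface-type) operators $D_i^n$ that refine the coboundary $\delta^n_{Hom}$ of \eqref{hom-morphism}, together with the implicit assertions that each $D_i^n$ is well defined (it lands in $C^{n+1}_{\alpha,\alpha'}(\mathcal{A},M)$) and that the $D_i^n$ carry exactly the structure witnessing $\delta^{n+1}_{Hom}\circ\delta^n_{Hom}=0$. Accordingly my plan is threefold: first verify well-definedness of the $D_i^n$; second recover the decomposition $\delta^n_{Hom}=\sum_{i=0}^{n-1}(-1)^{i+1}D_i^n$ by matching the two formulas term by term; third establish the simplicial relations among the $D_i^n$ and deduce the complex property by the standard alternating cancellation.

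\emph{Steps one and two.} For an interior operator $D_i^n$ ($1\le i\le n-2$) well-definedness is immediate: applying $\alpha'$ to $D_i^n\varphi(x_0,\dots,x_n)$ and using successively the cochain condition $\alpha'\circ\varphi=\varphi\circ\alpha^{\otimes n}$ and multiplicativity $\alpha\circ\mu=\mu\circ\alpha^{\otimes2}$ reproduces $D_i^n\varphi(\alpha(x_0),\dots,\alpha(x_n))$. For $D_0^n$ and $D_{n-1}^n$ one additionally pushes $\alpha'$ through the action terms $\mu'(\phi(\alpha^{n-1}(\cdot)),\cdot)$ and $\mu'(\cdot,\phi(\alpha^{n-1}(\cdot)))$ using $\alpha'\circ\mu'=\mu'\circ\alpha'^{\otimes2}$ and the morphism relations $\phi\circ\alpha=\alpha'\circ\phi$ and $\mu'\circ(\phi\otimes\phi)=\phi\circ\mu$; the powers of $\alpha$ then match because $\alpha$ is multiplicative. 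The decomposition of step two is purely formal: the term of $D_i^n$ containing $\mu(x_i,x_{i+1})$ equals, up to the sign $(-1)^{i+1}$, the $k=i+1$ summand of \eqref{hom-morphism}, while the action terms of $D_0^n$ and $D_{n-1}^n$ supply, with signs $(-1)^{0+1}$ and $(-1)^n$, the first and last terms of $\delta^n_{Hom}$.

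\emph{Step three: the main point.} The substantive content is the family of simplicial identities
\[
D_i^{n+1}\circ D_j^{n}=D_{j+1}^{n+1}\circ D_i^{n}\qquad(0\le i\le j\le n-1),
\]
from which $\delta^{n+1}_{Hom}\circ\delta^n_{Hom}=\sum_{i,j}(-1)^{i+j}D_i^{n+1}D_j^n$ vanishes, because the assignment $(i,j)\mapsto(j+1,i)$ is a bijection of $\{i\le j\}$ onto $\{i>j\}$ reversing the sign $(-1)^{i+j}$, so the terms cancel in pairs. I would verify the identities by a short case analysis. For off-diagonal interior indices $i<j$ (both in $\{1,\dots,n-2\}$) the equality is a relabeling in which multiplicativity $\alpha\circ\mu=\mu\circ\alpha^{\otimes2}$ carries the $\alpha$'s past the two inserted products. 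For the diagonal $i=j$ the two sides produce $\mu(\mu(x_i,x_{i+1}),\alpha(x_{i+2}))$ and $\mu(\alpha(x_i),\mu(x_{i+1},x_{i+2}))$, so the equality is precisely Hom-associativity of $\mu$. The delicate cases are those touching a boundary slot, where a doubly nested action appears: for instance at $i=j=0$ the left-hand side yields $\mu'(\phi(\alpha^{n}(x_0)),\mu'(\phi(\alpha^{n-1}(x_1)),\varphi(\cdots)))$, and reconciling it with the right-hand side forces one to invoke Hom-associativity of $\mu'$ together with the bimodule axioms of the adjoint representation (Proposition~\ref{left} and Remark~\ref{right}); the symmetric situation at the last slot uses the right action $\rho_r$ in the same way. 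Throughout, the morphism relations and multiplicativity are exactly what make the $\alpha$-powers dressing each argument coincide on the two sides. I expect the bookkeeping of the boundary operators $D_0^n$ and $D_{n-1}^n$—matching the powers of $\alpha$ and applying Hom-associativity of $\mu'$ to the nested module actions—to be the principal obstacle, the interior relations being essentially automatic once multiplicativity and the single instance of Hom-associativity on the diagonal are in hand.
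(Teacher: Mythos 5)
Your proposal is correct and follows essentially the same route as the paper: your identity $D_i^{n+1}\circ D_j^{n}=D_{j+1}^{n+1}\circ D_i^{n}$ for $i\le j$ is just a re-indexing of the paper's $D_i^{n+1}D_j^{n}=D_j^{n+1}D_{i-1}^{n}$ for $j<i$, and your sign-pairing cancellation is exactly the computation the paper uses to get $\delta^{n+1}_{Hom}\circ\delta^{n}_{Hom}=0$. You in fact supply details the paper leaves implicit (the case analysis via Hom-associativity of $\mu$ on the diagonal, the adjoint bimodule axioms at the boundary slots, and the well-definedness check, which the paper defers to a separate lemma), all of which are accurate.
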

Then
$D_{i}^{n+1}D_{j}^{n}=D_{j}^{n+1}D_{i-1}^{n} \text{ for }{ 0\leq j< i\leq n},\quad\hbox{and}\quad
\delta^{n}_{Hom}=\sum\limits_{i=0}^{n}(-1)^{i+1}D_{i}^{n}.$
\begin{prop}
Let $(\mathcal{A},\mu,\alpha)$ be a Hom-associative algebra and $
\delta_{Hom}^{n}:C^{n}_{\alpha,\alpha'}(\mathcal{A},M)\rightarrow C^{n+1}_{\alpha,\alpha'}
(\mathcal{A},M)$ be the operator defined in \eqref{hom-morphism}.  Then, 
$\delta^{n+1}_{Hom}\circ\delta^{n}_{Hom}=0\quad\hbox{for}~~n\geq1.$
\end{prop}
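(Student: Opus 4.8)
The plan is to deduce $\delta^{n+1}_{Hom}\circ\delta^n_{Hom}=0$ entirely from the two facts packaged in Lemma~\ref{lem}: the factorization $\delta^n_{Hom}=\sum_{i=0}^n(-1)^{i+1}D_i^n$ of the coboundary into the elementary operators $D_i^n$, together with the simplicial commutation relations $D_i^{n+1}D_j^n=D_j^{n+1}D_{i-1}^n$ for $0\le j<i\le n$. This is the standard mechanism by which cosimplicial-type identities force the square of the total differential to vanish; once the Lemma is granted, the Proposition is purely formal.

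First I would substitute the factorization into the composite and expand, using $(-1)^{i+1}(-1)^{j+1}=(-1)^{i+j}$, to obtain
\[
\delta^{n+1}_{Hom}\circ\delta^n_{Hom}=\sum_{i=0}^{n+1}\sum_{j=0}^{n}(-1)^{i+j}\,D_i^{n+1}D_j^n .
\]
I would note that the extreme indices carry no content: $D_j^n=0$ for $j\ge n$ and $D_i^{n+1}=0$ for $i\ge n+1$ by the last clause of Lemma~\ref{lem}, so in particular every summand with $i=n+1$ already vanishes. I would then split the double sum according to whether $j<i$ or $i\le j$, writing $\delta^{n+1}_{Hom}\circ\delta^n_{Hom}=A+B$ with $A=\sum_{0\le j<i}(-1)^{i+j}D_i^{n+1}D_j^n$ and $B=\sum_{0\le i\le j\le n}(-1)^{i+j}D_i^{n+1}D_j^n$.

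The crux is then to show $A=-B$. In $A$ only indices with $i\le n$ contribute (those with $i=n+1$ vanish), so the commutation relation of Lemma~\ref{lem} applies verbatim and gives $A=\sum_{0\le j<i\le n}(-1)^{i+j}D_j^{n+1}D_{i-1}^n$. Reindexing by $p=j$ and $q=i-1$ turns the constraint $j<i$ into $p\le q$ and replaces the sign $(-1)^{i+j}$ by $(-1)^{(q+1)+p}=-(-1)^{p+q}$, whence $A=-\sum_{0\le p\le q\le n}(-1)^{p+q}D_p^{n+1}D_q^n=-B$; the boundary index $q=n$ contributes $D_n^n=0$ on both sides, so the two summation ranges match exactly. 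Therefore $A+B=0$, i.e. $\delta^{n+1}_{Hom}\circ\delta^n_{Hom}=0$ for all $n\ge1$.

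I expect the genuine difficulty to lie entirely within Lemma~\ref{lem} rather than in the formal cancellation above. The relations $D_i^{n+1}D_j^n=D_j^{n+1}D_{i-1}^n$ are routine for interior indices, where both sides merely relabel which adjacent pair $x_k,x_{k+1}$ is multiplied; the delicate cases occur at the boundary, when one of the operators involved is $D_0$ or the top operator $D_{n-1}$, which carry the module actions $\rho_l=\mu'(\phi\otimes\mathrm{id})$ and $\rho_r=\mu'(\mathrm{id}\otimes\phi)$ rather than an internal $\mu$. There the equality rests on Hom-associativity of $(\mathcal A',\mu',\alpha')$, the multiplicativity of $\alpha$ and $\alpha'$, and the morphism identities $\mu'\circ(\phi\otimes\phi)=\phi\circ\mu$ and $\phi\circ\alpha=\alpha'\circ\phi$, which are precisely what make the twisting factors $\alpha^{n-1}$ and $\phi\alpha^{n-1}$ align. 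Granting Lemma~\ref{lem}, the only residual check is that $\delta^n_{Hom}$ really maps $C^n_{\alpha,\alpha'}(\mathcal A,M)$ into $C^{n+1}_{\alpha,\alpha'}(\mathcal A,M)$, which follows from the same compatibility relations.
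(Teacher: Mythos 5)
Your proposal is correct and coincides with the paper's own proof: the authors likewise expand $\delta^{n+1}_{Hom}\circ\delta^{n}_{Hom}=\sum_{0\leq i,j\leq n}(-1)^{i+j}D_{i}^{n+1}D_{j}^{n}$, split into the ranges $j<i$ and $i\leq j$, apply the relation $D_{i}^{n+1}D_{j}^{n}=D_{j}^{n+1}D_{i-1}^{n}$ from Lemma~\ref{lem}, and reindex $k=i-1$ to produce the sign flip that cancels the two sums. Your additional remarks on the vanishing boundary terms and on where the real work sits (the boundary cases of Lemma~\ref{lem}, resting on Hom-associativity, multiplicativity, and the morphism identities) only make explicit what the paper leaves implicit.
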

\begin{proof}
Indeed
\begin{small}
\begin{eqnarray*}
\delta^{n+1}_{Hom}\circ\delta^{n}_{Hom}&=&\sum\limits_{0\leq i,j\leq n}(-1)^{i+j}
D_{i}^{n+1}D_{j}^{n}=\sum\limits_{0\leq j<i\leq n}(-1)^{i+j}D_{i}^{n+1}D_{j}^{n}+
\sum_{0\leq i\leq j\leq n}(-1)^{i+j}D_{i}^{n+1}D_{j}^{n}\\[2pt]
&=&\sum_{0\leq j<i\leq n}(-1)^{i+j}D_{j}^{n+1}D_{i-1}^{n}+\sum\limits_{0\leq i\leq j\leq n}
(-1)^{i+j}D_{i}^{n+1}D_{j}^{n}\\[2pt]
&=&\sum_{0\leq j\leq k \leq n}(-1)^{k+j+1}D_{j}^{n+1}D_{k}^{n}+\sum_{0\leq i\leq j\leq n}
(-1)^{i+j}D_{i}^{n+1}D_{j}^{n}
=0.
\end{eqnarray*}
\end{small}
\end{proof}
\begin{lem}
With respect to the above notation, for $\varphi\in C^{n}_{\alpha,\alpha'}(\mathcal{A},M)$
and by the multiplicative property of these algebras,  we have
$\delta^{n}_{Hom}\varphi\circ\alpha^{\otimes{n+1}}=\alpha'\circ\delta^{n}_{Hom}\varphi.$
Thus, the  map
$\delta^{n}:C^{n}_{\alpha,\alpha'}(\mathcal{A},M)
\rightarrow C^{n+1}_{\alpha,\alpha'}(\mathcal{A},M)$ is well defined.
\end{lem}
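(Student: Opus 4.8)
The plan is to show directly that $\delta^{n}_{Hom}\varphi$ satisfies the cochain compatibility condition, i.e. that $\alpha'\circ(\delta^{n}_{Hom}\varphi)=(\delta^{n}_{Hom}\varphi)\circ\alpha^{\otimes(n+1)}$; this is exactly what ``well defined'' means, since it places $\delta^{n}_{Hom}\varphi$ in $C^{n+1}_{\alpha,\alpha'}(\mathcal{A},M)$. The verification rests on four structural inputs already available: the multiplicativity of $\alpha'$ on $\mathcal{A}'$, so that $\alpha'\circ\mu'=\mu'\circ(\alpha'\otimes\alpha')$; the multiplicativity of $\alpha$ on $\mathcal{A}$, giving $\alpha(\mu(x,y))=\mu(\alpha(x),\alpha(y))$; the cochain hypothesis on $\varphi$, namely $\alpha'\circ\varphi=\varphi\circ\alpha^{\otimes n}$; and the morphism identity $\alpha'\circ\phi=\phi\circ\alpha$, which iterates to $\alpha'\circ\phi\circ\alpha^{n-1}=\phi\circ\alpha^{n}$. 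I would then apply $\alpha'$ to each of the three parts of the defining formula \eqref{hom-morphism} and match them term by term with the corresponding parts of $\delta^{n}_{Hom}\varphi(\alpha(x_{0}),\cdots,\alpha(x_{n}))$.

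First I would treat the boundary term $\mu'(\phi(\alpha^{n-1}(x_{0})),\varphi(x_{1},\cdots,x_{n}))$. Pushing $\alpha'$ through $\mu'$ by multiplicativity of $\alpha'$ produces $\mu'(\alpha'(\phi(\alpha^{n-1}(x_{0}))),\alpha'(\varphi(x_{1},\cdots,x_{n})))$; the first slot becomes $\phi(\alpha^{n}(x_{0}))$ by the iterated morphism identity, and the second becomes $\varphi(\alpha(x_{1}),\cdots,\alpha(x_{n}))$ by the cochain hypothesis, which is precisely the first term of $\delta^{n}_{Hom}\varphi(\alpha(x_{0}),\cdots,\alpha(x_{n}))$. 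The final term $(-1)^{n+1}\mu'(\varphi(x_{0},\cdots,x_{n-1}),\phi(\alpha^{n-1}(x_{n})))$ is handled identically by symmetry, with the two arguments of $\mu'$ interchanged.

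For the middle sum I would apply $\alpha'$ to each summand $(-1)^{k}\varphi(\alpha(x_{0}),\cdots,\mu(x_{k-1},x_{k}),\cdots,\alpha(x_{n}))$ and invoke the cochain hypothesis to move $\alpha'$ inside $\varphi$ as one extra application of $\alpha$ on every slot; this turns each $\alpha(x_{j})$ into $\alpha^{2}(x_{j})$ and the distinguished entry $\mu(x_{k-1},x_{k})$ into $\alpha(\mu(x_{k-1},x_{k}))$. A single use of the multiplicativity of $\alpha$ rewrites that entry as $\mu(\alpha(x_{k-1}),\alpha(x_{k}))$, yielding exactly the $k$-th summand of $\delta^{n}_{Hom}\varphi(\alpha(x_{0}),\cdots,\alpha(x_{n}))$. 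Summing the three parts then gives the claimed identity. I do not expect a genuine obstacle here: the argument is a purely mechanical term-by-term matching, and the only point requiring care is the index bookkeeping in the middle sum, ensuring that the extra $\alpha$ furnished by the cochain condition is correctly distributed over both the $\alpha$-twisted slots and the $\mu$-slot before multiplicativity of $\alpha$ is applied.
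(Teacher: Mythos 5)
Your proof is correct and is essentially the argument the paper intends: the paper states this lemma without proof, but its proof of the exactly analogous Hom-Lie lemma (for $\delta^{n}_{HL}$ on $\widetilde{C}^{n}_{\alpha,\alpha'}(\mathcal{L},\Pi)$) is the same term-by-term verification, pushing $\alpha'$ through each piece of the coboundary formula using multiplicativity of $\alpha$ and $\alpha'$, the cochain condition $\alpha'\circ\varphi=\varphi\circ\alpha^{\otimes n}$, and the intertwining $\alpha'\circ\phi=\phi\circ\alpha$. Your identification of precisely these four structural inputs, and the matching of the first term, the middle sum, and the last term with the corresponding terms of $\delta^{n}_{Hom}\varphi(\alpha(x_{0}),\cdots,\alpha(x_{n}))$, is exactly right.
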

\begin{remark}
General case: Let $\mathcal{A}$ and $\mathcal{A}'$ be two Hom-associative
algebras. Let $M = (\mathcal{A}',\rho_{l},\rho_{r})$ be a $\mathcal{A}$-bimodule, where $\rho_{l}$ and $\rho_{r}$ are left
$\mathcal{A}$-module and 
right $\mathcal{A}$-module respectively. For $\varphi\in :C^{n}_{\alpha,\alpha'}(\mathcal{A},M)$, we set 
\begin{eqnarray*}
&& \delta^{n}_{Hom}\varphi(x_{0},x_{1},\cdots,x_{n})=
\rho_{l}(\alpha^{n-1}(x_{0}),\varphi(x_{1},x_{2},\cdots,x_{n}))\\[2pt]\nonumber
&&+\sum\limits_{k=1}^{n}(-1)^{k}\varphi(\alpha(x_{0}),\alpha(x_{1}),\cdots,
\alpha(x_{k-2}),\mu(x_{k-1},x_{k}),\alpha(x_{k+1}),\cdots,\alpha(x_{n}))
\\[2pt]\nonumber
&&+(-1)^{n+1}\rho_{r}(\varphi(x_{0},\cdots,x_{n-1}),\alpha^{n-1}(x_{n})).
\end{eqnarray*}
Then $\delta^{n+1}_{Hom}\circ\delta^{n}_{Hom}=0$.\\
The proof is similar to  that of  Lemma \ref{lem}.
and for $D_{i}^{n+1}\circ D_{j}^{n}=D_{j}^{n+1}\circ D_{i-1}^{n}$,
we use the compatibility between the left $\mathcal{A}$-module
and the right $\mathcal{A}$-module
and the multiplicativity  of the algebras.
\end{remark}
\begin{remark}
In the particular case where  $M=\mathcal{A}$ and $\rho_{l},\rho_{r}=\mu$,
the Hom-associative algebra is a $\mathcal{A}$-bimodule over itself
and $\delta^{n}_{Hom}$ is  the same as in  \eqref{hom}.
\end{remark}
The space of $n$-cocycles is
$Z^{n}_{Hom}(\mathcal{A},M)=\{\varphi\in C^{n}_{\alpha,\alpha'}(\mathcal{A},M)
:\delta^{n}_{Hom}\varphi=0\},$
and the space of $n$-coboundaries is
$B^{n}_{Hom}(\mathcal{A},M)=\{\psi=\delta^{n-1}_{Hom}\varphi:
\varphi\in C^{n-1}_{\alpha,\alpha'}(\mathcal{A},M)\}.$
Obviously $B^{n}_{Hom}(\mathcal{A},M)\subset Z^{n}_{Hom}(\mathcal{A},M).$
We call the $n^{th}$ Hochschild cohomology group of the Hom-associative
algebra $\mathcal{A}$ with values in an adjoint $\mathcal{A}$-bimodule the quotient
$H^{n}_{Hom}(\mathcal{A},M)=\frac{Z^{n}_{Hom}(\mathcal{A},M)}{B^{n}_{Hom}(\mathcal{A},M)}.$

\subsection{Cohomology complex of multiplicative Hom-Lie algebras with values
in a left $\mathcal{L}$-module}

Now, we construct a cochain complex $C^{*}_{HL}(\mathcal{L},\Pi)$
that defines a Chevalley-Eilenberg cohomology for multiplicative Hom-Lie algebras
with values in a left $\mathcal{L}$-module $\Pi$.\\
Let $\mathcal{L}$ and $\mathcal{L}'$ be two Hom-Lie algebras and
$\phi:\mathcal{L}\rightarrow\mathcal{L}'$ be a Hom-Lie morphism.
Regard $\mathcal{L}$ as a representation $\Pi$ of $\mathcal{L}$ via $\phi$ defined by
\eqref{representation}.
The set $C^{k}_{HL}(\mathcal{L},\Pi)$ of $k$-cochains on $\mathcal{L}$ with values in $\Pi$
is the set of skewsymmetric
$\mathbb{K}$-linear maps from $\mathcal{L}^{\times k}$ to $\Pi:$\\
$$C^{k}_{HL}(\mathcal{L};\Pi)=\{f:\wedge^{k}\mathcal{L}\rightarrow \Pi~~\hbox{ linear map}\}.$$
A $k$-Hom-cochain on $\mathcal{L}$ with values in $\Pi$ is defined to be a $k$-cochain
 $f \in C^{k}_{HL}(\mathcal{L},\Pi)$ such that it
is compatible with $\alpha$ and $\alpha'$ in the sense that $\alpha'\circ f=f\circ\alpha^{\otimes k}$ , i.e.
\begin{equation}\label{condition cochain}
\alpha'(f(u_{1}, \cdots, u_{k})) = f(\alpha(u_{1}),\cdots , \alpha(u_{k}))\quad\hbox{for all }~
u_{1},\cdots,u_{n}\in\mathcal{L}.
\end{equation}
We denote  by $\widetilde{C}^{k}_{\alpha,\alpha'}(\mathcal{L},\Pi)$ the set of $k$-Hom-cochains:
$$\widetilde{C}^{k}_{\alpha,\alpha'}(\mathcal{L},\Pi)=\{f\in C^{k}_{HL}(\mathcal{L},\Pi):
\alpha'\circ f=f\circ\alpha\}.$$
For $k=0$, we have $\widetilde{C}^{0}_{\alpha,\alpha'}(\mathcal{L},\Pi)=\Pi$.
\begin{defn}
Let $(\mathcal{L},[\cdot, \cdot],\alpha)$ and  $(\mathcal{L},[\cdot, \cdot]',\alpha')$ be two
Hom-Lie algebras. Let  $\phi:\mathcal{L}\rightarrow \mathcal{L}'$ be a Hom-Lie
algebra morphism.
Regard $\mathcal{L}'$ as a representation of $\mathcal{L}$ via $\phi$ wherever appropriate.
We call, for $n\geq1$, n-coboundary operator associated to the triple $(\mathcal{L},\Pi,\phi)$
the linear map
$
\delta^{n}:C^{n}_{HL}(\mathcal{L},\Pi)\rightarrow C^{n+1}_{HL}(\mathcal{L},\Pi)
$
defined by
\begin{eqnarray}\label{phi}
\delta^{n}_{HL}\varphi(x_{0},\cdots,x_{n})&=&\sum\limits_{i=0}^{n}
(-1)^{i}[\phi(\alpha^{n-1}(x_{i})),\varphi (x_{0},\cdots,\widehat{x_{i}},\cdots,x_{n})]^{'}\\[2pt]
\nonumber
&+&\sum\limits_{0\leq i<j \leq n} (-1)^{i+j}\varphi([x_{i},x_{j}],\alpha(x_{0})
,\cdots,\widehat{x_{i}},\cdots,\widehat{x_{j}},\cdots,\alpha(x_{n})).
\end{eqnarray}
\end{defn}
We show that  $(\widetilde{C}^{*}_{\alpha,\alpha'}(\mathcal{L},\Pi),\delta_{HL})$ is a
cochain complex. The corresponding cohomology denoted by $H^{*}_{HL}(\mathcal{L},\Pi)$,
is called the cohomology of the Hom-Lie algebra
$\mathcal{L}$ with coefficients in the representation $\Pi$.
\begin{lem}
With respect to the above notation, for any
$f\in \widetilde{C}^{n}_{\alpha,\alpha'}(\mathcal{L},\Pi)$, we have
$$\delta^{n}_{HL}(f)\circ\alpha^{\otimes n+1}=\alpha'\circ\delta^{n}_{HL}(f).$$
Thus $\delta^{n}:\widetilde{C}^{n}_{\alpha,\alpha'}(\mathcal{L},\Pi)
\rightarrow \widetilde{C}^{n+1}_{\alpha,\alpha'}(\mathcal{L},\Pi)$ is well defined.
\end{lem}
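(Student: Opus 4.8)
The plan is to show that $\delta^{n}_{HL}(f)$ again satisfies the Hom-cochain compatibility \eqref{condition cochain}, which is precisely the asserted identity $\delta^{n}_{HL}(f)\circ\alpha^{\otimes n+1}=\alpha'\circ\delta^{n}_{HL}(f)$; the well-definedness of $\delta^{n}$ as a map $\widetilde{C}^{n}_{\alpha,\alpha'}(\mathcal{L},\Pi)\rightarrow\widetilde{C}^{n+1}_{\alpha,\alpha'}(\mathcal{L},\Pi)$ then follows immediately. I would evaluate $\delta^{n}_{HL}(f)(\alpha(x_{0}),\cdots,\alpha(x_{n}))$ directly from \eqref{phi} and match it term by term against $\alpha'\big(\delta^{n}_{HL}(f)(x_{0},\cdots,x_{n})\big)$, handling the two sums in \eqref{phi} separately. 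The structural facts I would lean on are the morphism relation $\phi\circ\alpha=\alpha'\circ\phi$, the multiplicativity of $\alpha$ and $\alpha'$ (so that $\alpha([x,y])=[\alpha(x),\alpha(y)]$ and likewise in $\mathcal{L}'$), and the hypothesis $\alpha'\circ f=f\circ\alpha^{\otimes n}$ that $f$ is a Hom-cochain.

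For the first (``action'') sum, substituting $\alpha(x_{k})$ for each $x_{k}$ turns the $i$-th summand into $[\phi(\alpha^{n}(x_{i})),f(\alpha(x_{0}),\cdots,\widehat{\alpha(x_{i})},\cdots,\alpha(x_{n}))]'$. Here I would rewrite $\phi(\alpha^{n}(x_{i}))=\alpha'(\phi(\alpha^{n-1}(x_{i})))$ using $\phi\circ\alpha=\alpha'\circ\phi$, and replace the omitted-argument $f$-value by $\alpha'\big(f(x_{0},\cdots,\widehat{x_{i}},\cdots,x_{n})\big)$ via the cochain property of $f$. The summand then reads $[\alpha'(\cdot),\alpha'(\cdot)]'$, and the representation compatibility $[\alpha(u),\beta(v)]_{\Pi}=\beta([u,v]_{\Pi})$ with $\beta=\alpha'$ (equivalently, multiplicativity of $\alpha'$ on $\mathcal{L}'$) lets me pull a single $\alpha'$ outside the whole bracket, recovering $\alpha'$ times the $i$-th term of $\delta^{n}_{HL}(f)(x_{0},\cdots,x_{n})$.

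For the second sum, substituting $\alpha(x_{k})$ everywhere yields the argument list $\big([\alpha(x_{i}),\alpha(x_{j})],\alpha^{2}(x_{0}),\cdots,\alpha^{2}(x_{n})\big)$ with the $i$- and $j$-slots omitted. Using multiplicativity of $\alpha$ on $\mathcal{L}$ I would rewrite $[\alpha(x_{i}),\alpha(x_{j})]=\alpha([x_{i},x_{j}])$, so that every entry of $f$ is now $\alpha$ of something; one application of the Hom-cochain property $f\circ\alpha^{\otimes n}=\alpha'\circ f$ then factors $\alpha'$ out of the whole summand, producing $\alpha'$ times the corresponding term of $\delta^{n}_{HL}(f)(x_{0},\cdots,x_{n})$. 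Adding the two sums gives $\delta^{n}_{HL}(f)(\alpha(x_{0}),\cdots,\alpha(x_{n}))=\alpha'\big(\delta^{n}_{HL}(f)(x_{0},\cdots,x_{n})\big)$, as required.

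I expect the only real obstacle to be bookkeeping: keeping the index shifts and the two hatted arguments aligned while checking that exactly one power of $\alpha$ is consumed in passing from $\alpha^{2}(x_{k})$ and $\alpha^{n}(x_{i})$ down to the values appearing inside $\delta^{n}_{HL}(f)$ itself. In particular I would verify carefully that the action term genuinely requires the representation axiom (multiplicativity of $\alpha'$ on $\Pi=\mathcal{L}'$) rather than merely the cochain property, since it is the bracket of $\mathcal{L}'$—not $f$—that must absorb the extra $\alpha'$ in that sum.
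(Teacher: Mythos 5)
Your proof is correct and matches the paper's own argument: both expand $\delta^{n}_{HL}(f)(\alpha(x_{0}),\cdots,\alpha(x_{n}))$ from \eqref{phi}, handle the action sum via $\phi\circ\alpha=\alpha'\circ\phi$, the Hom-cochain property of $f$, and the multiplicativity of $\alpha'$ on $\mathcal{L}'$, and handle the second sum via $[\alpha(x_{i}),\alpha(x_{j})]=\alpha([x_{i},x_{j}])$ followed by one application of $f\circ\alpha^{\otimes n}=\alpha'\circ f$. Your closing remark that the action term needs the representation axiom (equivalently, multiplicativity of $\alpha'$) and not merely the cochain property is exactly the point the paper's computation relies on.
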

\begin{proof}
Let $f\in \widetilde{C}^{n}_{\alpha,\alpha'}(\mathcal{L},\Pi)$ and
$(x_{0},\cdots,x_{n})\in\mathcal{L}^{n+1}$, then
$$\begin{array}{lllll}
&\delta^{n}_{HL}f\circ\alpha(x_{0},\cdots,x_{n})
=\delta_{HL}^{n}f(\alpha(x_{0}),\cdots,\alpha(x_{n}))\\[2pt]
&=\sum\limits_{i=0}^{n}
(-1)^{i}[\phi(\alpha^{n}(x_{i})),f(\alpha(x_{0}),\cdots,\widehat{x_{i}},\cdots,\alpha(x_{n}))]^{'}\\[2pt]
&+\sum\limits_{0\leq i<j \leq n} (-1)^{i+j}f([\alpha(x_{i}),\alpha(x_{j})],\alpha^{2}(x_{0})
,\cdots,\widehat{x_{i}},\cdots,\widehat{x_{j}},\cdots,\alpha^{2}(x_{n}))\\[2pt]
&=\sum\limits_{i=0}^{n}
(-1)^{i}[\phi(\alpha^{n}(x_{i})),f\circ\alpha(x_{0},\cdots,\widehat{x_{i}},\cdots,x_{n})]^{'}\\[2pt]
&+\sum\limits_{0\leq i<j \leq n} (-1)^{i+j}f\circ\alpha([x_{i},x_{j}],\alpha(x_{0})
,\cdots,\widehat{x_{i}},\cdots,\widehat{x_{j}},\cdots,\alpha(x_{n}))\\[2pt]
&=\sum\limits_{i=0}^{n}
(-1)^{i}\alpha'([\phi(\alpha^{n-1}(x_{i})),f(x_{0},\cdots,\widehat{x_{i}},\cdots,x_{n})]^{'})\\[2pt]
&+\sum\limits_{0\leq i<j \leq n} (-1)^{i+j}\alpha'\circ f([x_{i},x_{j}],\alpha(x_{0})
,\cdots,\widehat{x_{i}},\cdots,\widehat{x_{j}},\cdots,\alpha(x_{n}))\\[2pt]
&=\alpha'\circ\delta^{n}_{HL}(f)(x_{0},\cdots,x_{n}).
\end{array}$$
\end{proof}
\begin{thm} We have
$\delta^{n+1}_{HL}\circ\delta^{n}_{HL}=0$.
\end{thm}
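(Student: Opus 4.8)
The plan is to prove the identity by direct expansion, following the classical template for Chevalley--Eilenberg cohomology but keeping careful track of the twisting maps $\alpha,\alpha'$ and of the fact that the coefficients lie in the representation $\Pi=\mathcal{L}'$ attached to $\phi$ by Proposition~\ref{representation}. First I would fix $\varphi\in\widetilde{C}^{n}_{\alpha,\alpha'}(\mathcal{L},\Pi)$ and write $\psi=\delta^{n}_{HL}\varphi\in\widetilde{C}^{n+1}_{\alpha,\alpha'}(\mathcal{L},\Pi)$, which is legitimate by the preceding lemma. Substituting the defining formula \eqref{phi} for $\psi$ into the defining formula \eqref{phi} for $\delta^{n+1}_{HL}\psi(x_{0},\dots,x_{n+1})$ produces four families of terms, according to whether each of the two applications of $\delta_{HL}$ contributes its ``bracket-action'' summand (the first sum, of the form $[\phi(\alpha^{\bullet}(x_{i})),\,\cdot\,]'$) or its ``inner-contraction'' summand (the second sum, in which a pair $x_{i},x_{j}$ is replaced by $[x_{i},x_{j}]$ while every surviving slot is hit by $\alpha$). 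I would label these (AA), (AI), (IA) and (II) and show that the whole expression collapses to zero by pairing terms across these families.

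The cancellations break up as follows. The (II)-family consists of values of $\varphi$ on arguments carrying two independent contractions; when the two contracted index-pairs are disjoint they cancel in pairs under the antisymmetry of $\varphi$ after relabelling, and when they share an index they assemble, via the alternation of $\varphi$, into terms of the form $\varphi\big([[x_{i},x_{j}],\alpha(x_{k})],\dots\big)$ summed cyclically over $i,j,k$, which vanish by the Hom-Jacobi identity of $\mathcal{L}$. The mixed families (AI) and (IA)\,---\,those in which exactly one bracket $[\,\cdot\,,\,\cdot\,]'$ survives and one contraction has been performed inside $\varphi$\,---\,match term for term with opposite signs and cancel directly, once the morphism relations $\phi\circ\alpha=\alpha'\circ\phi$ and $\phi\circ[\cdot,\cdot]=[\cdot,\cdot]'\circ(\phi\otimes\phi)$ are used to move $\phi$ through the arguments. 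The decisive step is the (AA)-family, the iterated brackets $[\phi(\alpha^{n}(x_{i})),[\phi(\alpha^{n-1}(x_{l})),\varphi(\dots)]']'$, together with those (IA)-terms whose acting element is itself a contracted pair $[x_{i},x_{j}]$: grouping a triple $i,l$ and using the representation axiom \eqref{condition} for $\Pi$\,---\,equivalently the Hom-Jacobi identity of $\mathcal{L}'$ transported through $\phi$, which is exactly the content of Proposition~\ref{representation}\,---\,rewrites $[\phi([\alpha^{\bullet}(x_{i}),\alpha^{\bullet}(x_{l})]),\varphi]'$ as the appropriate difference of two nested brackets, and these annihilate the symmetric pair coming from (AA).

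The main obstacle will be purely bookkeeping: ensuring that the powers of $\alpha$ and $\alpha'$ on every factor line up so that the three structural identities can be applied slot by slot. Concretely, to invoke \eqref{condition} one needs the innermost acting argument to appear as $\alpha'(\,\cdot\,)$ and the remaining entries of $\varphi$ to carry one extra $\alpha$; this is arranged using the multiplicativity of $\alpha$ and $\alpha'$ (so that $\alpha$ and $\alpha'$ commute with the brackets and $\alpha^{n-1}[\,\cdot\,,\,\cdot\,]=[\alpha^{n-1}\,\cdot\,,\alpha^{n-1}\,\cdot\,]$) together with the cochain condition \eqref{condition cochain}, $\alpha'\circ\varphi=\varphi\circ\alpha^{\otimes n}$, which lets one absorb or release a factor of $\alpha'$ across $\varphi$ exactly when the index shifts demand it. I would carry out the sign and index reconciliation once on the (II)-family and once on the (AA)/(IA) grouping; the remaining matchings are formally identical to the untwisted Chevalley--Eilenberg computation. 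As an alternative that avoids most of this bookkeeping, one could instead derive $\delta^{n+1}_{HL}\circ\delta^{n}_{HL}=0$ from the already-established Hochschild identity $\delta^{n+1}_{Hom}\circ\delta^{n}_{Hom}=0$ by antisymmetrising with the alternator $\lambda$, after checking that $\delta_{HL}$ is the $\lambda$-image of $\delta_{Hom}$ on the morphism complex.
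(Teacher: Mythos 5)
Your proposal is correct and follows essentially the same route as the paper's own proof: a direct expansion of $\delta^{n+1}_{HL}\circ\delta^{n}_{HL}\varphi$ into the bracket-action and contraction families, with the disjoint double contractions cancelling via the multiplicativity of $\alpha$, the shared-index contractions via the Hom-Jacobi identity of $\mathcal{L}$, the mixed families matching pairwise after relabelling, and the nested-bracket terms together with $[\phi(\alpha^{n-1}([x_{i},x_{j}])),\varphi(\cdots)]'$ annihilating one another via the Hom-Jacobi identity of $\mathcal{L}'$ transported through $\phi$ and the cochain condition \eqref{condition cochain}. Only your closing antisymmetrization shortcut departs from the paper (and would need separate justification, since $\delta_{HL}$ on this complex is not literally the alternator image of a Hochschild differential on Hom-Lie data), but it is offered as an aside and your main argument coincides with the paper's proof.
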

\begin{proof}
\small{\begin{eqnarray}
\nonumber
 &&  \delta^{n+1}_{HL}\circ\delta^{n}_{HL}\varphi(x_{0},\cdots,x_{n+1})
 =\sum\limits_{i=0}^{n+1}(-1)^{i}[\phi(\alpha^{n}(x_{i})),\delta_{HL}^{n}
  \varphi(x_{0},\cdots,\widehat{x}_{i},\cdots,x_{n+1})]'\\
  \nonumber &&+\sum\limits_{0\leq i<j\leq n+1}(-1)^{i+j}\delta^{n}_{HL}\varphi([x_{i},x_{j}],
  \alpha(x_{0}),\cdots,\widehat{x}_{i},\cdots,\widehat{x}_{j},\cdots,\alpha(x_{n+1}))
    \end{eqnarray}
 with \begin{eqnarray}
 &&\nonumber \sum\limits_{i=0}^{n+1}(-1)^{i}[\phi(\alpha^{n}(x_{i})),\delta_{HL}^{n}
  \varphi(x_{0},\cdots,\widehat{x}_{i},\cdots,x_{n+1})]'\\[1pt]
  &&=[\phi(\alpha^{n}(x_{i})),\sum_{p=0}^{i-1}(-1)^{p}[\phi(\alpha^{n-1}
  (x_{p})),\varphi(x_{0},\cdots,\widehat{x}_{p},\cdots,\widehat{x}_{i},\cdots,x_{n+1})]'\label{terme1}\\[0.5pt]
  &&+[\phi(\alpha^{n}(x_{i}),\sum_{p=i+1}^{n+1}(-1)^{p-1}
  [\phi(\alpha^{n-1}(x_{p})),\varphi(x_{0},\cdots,\widehat{x}_{i},\cdots,\widehat{x}_{p}
  ,\cdots,x_{n+1})]'\label{terme2}\\[0.5pt]
  &&+[\phi(\alpha^{n}(x_{i})),\sum_{i<p<q}(-1)^{p+q}\varphi([x_{p},x_{q}]
  ,\alpha(x_{0}),\cdots,\widehat{x}_{i},\cdots,\widehat{x}_{p},\cdots,\widehat{x}_{q},
  \cdots,\alpha(x_{n+1}))]'\label{terme3}
  \\[0.5pt]
  &&+[\phi(\alpha^{n}(x_{i})),\sum_{p<i<q}(-1)^{p+q+1}\varphi
  ([x_{p},x_{q}],\alpha(x_{0}),\cdots,\widehat{x}_{p},\cdots,\widehat{x}_{i},\cdots,\widehat{x}_{q}
  ,\cdots,\alpha(x_{n+1}))]'\label{terme4}\\[0.5pt]
  &&+[\phi(\alpha^{n}(x_{i})),\sum_{p<q<i}(-1)^{p+q}\varphi([x_{p},x_{q}]
  ,\alpha(x_{1}),\cdots,\widehat{x}_{p},\cdots,\widehat{x}_{q},\cdots,
  \widehat{x}_{i},\cdots,\alpha(x_{n+1}))]'\label{terme5}
\end{eqnarray}}
and
$$\delta^{n}_{HL}\varphi([x_{i},x_{j}],\alpha(x_{0}),\cdots,\widehat{x}_{i}
,\cdots,\widehat{x}_{j},\cdots,\alpha(x_{n+1})$$
\small{\begin{eqnarray}
&=&[\phi(\alpha^{n-1}([x_{i},x_{j}])),\varphi(\alpha(x_{0}),\cdots
,\widehat{x}_{i},\cdots,\widehat{x}_{j},\cdots,\alpha(x_{n+1}))]\label{terme6}\\[0.5pt]
&+&\sum_{p=0}^{i-1}(-1)^{p-1}[\phi(\alpha^{n}(x_{p})),\varphi([x_{i},x_{j}]
,\alpha(x_{0}),\cdots,\widehat{x}_{p},\cdots,\widehat{x}_{i},\cdots,
\widehat{x}_{j},\cdots,\alpha(x_{n+1}))]'\label{terme7}\\[0.5pt]
&+&\sum_{p=i+1}^{j-1}(-1)^{p}[\phi(\alpha^{n}(x_{p}))
,\varphi([x_{i},x_{j}],\alpha(x_{0}),\cdots,\widehat{x}_{i},\cdots,
\widehat{x}_{p},\cdots,\widehat{x}_{j},\cdots,\alpha(x_{n+1}))]'\label{terme8}\\[0.5pt]
&+&\sum_{p=j+1}^{n+1}(-1)^{p-1}[\phi(\alpha^{n}(x_{p})),\varphi([x_{i},x_{j}],
\alpha(x_{0}),\cdots,\widehat{x}_{i},\cdots,\widehat{x}_{j},\cdots,\widehat{x}_{p}
,\cdots,\alpha(x_{n+1}))]'\label{terme9}\\[0.5pt]
&+&\sum_{p=1}^{i-1}(-1)^{p}\varphi([[x_{i},x_{j}],\alpha(x_{p})],
 \alpha^{2}(x_{0}),\cdots,\widehat{x}_{p},\cdots,\widehat{x}_{i},\cdots,\widehat{x}_{j},
 \cdots,\alpha^{2}(x_{n+1}))\label{terme10}\\[0.5pt]
&+&\sum_{p=i+1}^{j-1}(-1)^{p-1}\varphi([[x_{i},x_{j}],\alpha(x_{p})],\alpha^{2}(x_{0}),
 \cdots,\widehat{x}_{i},\cdots,\widehat{x}_{p},\cdots,\widehat{x}_{j},\cdots,\alpha^{2}(x_{n+1}))
 \label{terme11}
 \\[0.5pt]
&+&\sum_{p=j+1}^{n+1}(-1)^{p}\varphi([[x_{i},x_{j}],\alpha(x_{p})],
 \alpha^{2}(x_{0}),\cdots,\widehat{x}_{i},\cdots,\widehat{x}_{j},\cdots,\widehat{x}_{p},\cdots,
 \alpha^{2}(x_{n+1}))\label{terme12}\\[0.5pt]
&+&\sum_{p<q<i<j}(-1)^{p+q-1}\varphi([\alpha(x_{p}),\alpha(x_{q})],\alpha([x_{i},x_{j}]),
 \alpha^{2}(x_{0}),\cdots,\widehat{x_{i,j,p,q}},\cdots,\alpha^{2}(x_{n+1})))\label{15}
 \\[0.5pt]
&+&\sum_{i<p<q<j}(-1)^{p+q}\varphi([\alpha(x_{p}),\alpha(x_{q})],\alpha([x_{i},x_{j}]),
 \alpha^{2}(x_{0}),\cdots,\widehat{x_{i,j,p,q}},\cdots,\alpha^{2}(x_{n+1})))\label{16}\\[0.5pt]
&+&\sum_{i<j<p<q}(-1)^{p+q}\varphi([\alpha(x_{p}),\alpha(x_{q})],\alpha([x_{i},x_{j}]),
 \alpha^{2}(x_{0}),\cdots,\widehat{x_{i,j,p,q}},\cdots,\alpha^{2}(x_{n+1})))\label{17}
   \end{eqnarray}
 \begin{eqnarray}
 &+&\sum_{i<p<j<q}(-1)^{p+q-1}\varphi([\alpha(x_{p}),\alpha(x_{q})],\alpha([x_{i},x_{j}]),
 \alpha^{2}(x_{0}),\cdots,\widehat{x_{i,j,p,q}},\cdots,\alpha^{2}(x_{n+1})))\label{18}\\[0.5pt]
 &+&\sum_{p<i<q<j}(-1)^{p+q}\varphi([\alpha(x_{p}),\alpha(x_{q})],\alpha([x_{i},x_{j}]),
 \alpha^{2}(x_{0}),\cdots,\widehat{x_{i,j,p,q}},\cdots,\alpha^{2}(x_{n+1})))\label{19}\\[0.5pt]
 &+&\sum_{p<i<j<q}(-1)^{p+q-1}\varphi([\alpha(x_{p}),\alpha(x_{q})],\alpha([x_{i},x_{j}]),
\alpha^{2}(x_{0}),\cdots,\widehat{x_{i,j,p,q}},\cdots,\alpha^{2}(x_{n+1}))). \label{20}
\end{eqnarray}}
By the fact that $\alpha$ is an algebra morphism, we get
$$\sum_{0\leq i<j\leq n}(-1)^{i+j}[\eqref{15}+\eqref{17}+\eqref{16}+\eqref{20}+\eqref{19}+\eqref{18}]=0.$$
 Using Hom-Jacobi identity, the fact that $\phi\circ[\cdot, \cdot]=[\cdot, \cdot]'\circ(\phi\otimes\phi)$
and  $(\ref{condition cochain})$, we obtain
$$\sum_{i=0}^{n+1}(-1)^{i}(\eqref{terme1}+\eqref{terme2})+\sum_{0\leq i<j\leq n+1}(-1)^{i+j}\eqref{terme6}=0.$$
Hom-Jacobi identity leads also to
$\sum_{0\leq i<j\leq n+1}(-1)^{i+j}(\eqref{terme12}+\eqref{terme11}+\eqref{terme10})=0$,
and  simple calculation to  the following equality
$$\sum_{i=0}^{n+1}(-1)^{i}(\eqref{terme3}+\eqref{terme4}+\eqref{terme5})=
-\sum_{0\leq i<j\leq n+1}(-1)^{i+j}(\eqref{terme7}+\eqref{terme8}+\eqref{terme9}).$$
\end{proof}
\begin{defn}
The space of $n$-cocycles is defined by
$$Z^{n}_{HL}(\mathcal{L},\Pi)=\{\varphi\in\widetilde{C}_{\alpha,\alpha'}^{n}
(\mathcal{L},\Pi):\delta^{n}_{HL}\varphi=0\},$$
and the space of $n$-coboundaries is defined by
$$B^{n}_{HL}(\mathcal{L},\Pi)=\{\psi=\delta^{n-1}_{HL}\varphi:\varphi
\in\widetilde{C}^{n-1}_{\alpha,\alpha'}(\mathcal{L},\Pi)\}.$$
One has $B^{n}_{HL}(\mathcal{L},\Pi)\subset Z^{n}_{HL}(\mathcal{L},\Pi)$. Then, we call the $n^{th}$ cohomology group of the Hom-Lie algebra $\mathfrak{g}$ with coefficients in $\Pi$,
the quotient
$H^{n}_{HL}(\mathcal{L},\Pi)=\frac{Z^{n}_{HL}(\mathcal{L},\Pi)}{B^{n}_{HL}(\mathcal{L},\Pi)}.$
\end{defn}


\section{Cohomology Complex of Hom-algebra morphisms}

In this section, we construct the module $C^{n}(\phi,\phi)$ of $n$-cochains of the
Hom-algebra morphism $\phi$ and we give an explicit formula for
the coboundary operator.
\subsection{Cohomology Complex of Hom-associative algebra morphisms}
The original cohomology theory associated to deformation of associative
algebra morphism was introduced by M. Gerstenhaber in \cite{ms3}.
In this section, we will discuss this theory for Hom-associative algebra morphisms.\\
Let $\mathcal{A}, \mathcal{B}$ be two Hom-associative algebras and $\phi:\mathcal{A}\rightarrow\mathcal{B}$ be a Hom-associative algebra morphism.
Regard $\mathcal{B}$ as a representation of $\mathcal{A}$ via $\phi$ wherever
appropriate.\\
Define the module of $n$-cochains of $\phi$ by
\begin{equation*}
C^{n}_{Hom}(\phi,\phi)=C^{n}_{Hom}(\mathcal{A},\mathcal{A})\times C^{n}_{Hom}(\mathcal{B},\mathcal{B})
\times C^{n-1}_{\alpha,\alpha'}(\mathcal{A},\mathcal{B}).
\end{equation*}
The coboundary operator $\delta^{n}:C^{n}_{Hom}(\phi,\phi)\rightarrow C^{n+1}_{Hom}(\phi,\phi)$ is
defined by
$$\delta^{n}(\varphi_{1},\varphi_{2},\varphi_{3})=
(\delta^{n}_{Hom}\varphi_{1},\delta^{n}_{Hom}\varphi_{2},\phi\circ\varphi_{1}-\varphi_{2}\circ\phi^{\otimes n}
-\delta^{n-1}_{Hom}\varphi_{3}),$$
where $\delta^{n}_{Hom}\varphi_{1}$ and
$\delta^{n}_{Hom}\varphi_{2}$ are defined by  \eqref{hom}  and $\delta^{n}_{Hom}\varphi_{3}$
 is defined by \eqref{hom-morphism}.
\begin{thm} We have $\delta^{n+1}\circ\delta^{n}=0$. Hence
$(C^{*}_{Hom}(\phi,\phi),\delta^{n})$ is a cochain complex.
\end{thm}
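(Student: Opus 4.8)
The plan is to verify that $\delta^{n+1}\circ\delta^n=0$ by applying $\delta^{n+1}$ to an arbitrary triple and checking that each of the three output slots vanishes, using the results already established for the individual complexes. Writing $\delta^n(\varphi_1,\varphi_2,\varphi_3)=(\delta^n_{Hom}\varphi_1,\ \delta^n_{Hom}\varphi_2,\ \phi\circ\varphi_1-\varphi_2\circ\phi^{\otimes n}-\delta^{n-1}_{Hom}\varphi_3)$, the next application gives a triple whose first two slots are $\delta^{n+1}_{Hom}\delta^n_{Hom}\varphi_1$ and $\delta^{n+1}_{Hom}\delta^n_{Hom}\varphi_2$. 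These vanish immediately by the fact that $(C^*_{Hom}(\mathcal{A},\mathcal{A}),\delta_{Hom})$ and $(C^*_{Hom}(\mathcal{B},\mathcal{B}),\delta_{Hom})$ are cochain complexes, i.e. $\delta^{n+1}_{Hom}\circ\delta^n_{Hom}=0$ on the coefficient-in-itself Hochschild complexes. So the first two components require no new work beyond invoking the earlier Proposition.

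The real content is the third slot. By definition of $\delta^{n+1}$, this component equals
$$\phi\circ(\delta^n_{Hom}\varphi_1)-(\delta^n_{Hom}\varphi_2)\circ\phi^{\otimes(n+1)}-\delta^n_{Hom}\bigl(\phi\circ\varphi_1-\varphi_2\circ\phi^{\otimes n}-\delta^{n-1}_{Hom}\varphi_3\bigr),$$
where the outer $\delta^n_{Hom}$ in the last term is the morphism coboundary \eqref{hom-morphism} valued in the adjoint bimodule $\mathcal{B}$. The term $\delta^n_{Hom}(\delta^{n-1}_{Hom}\varphi_3)$ vanishes by the Proposition showing $\delta^{n+1}_{Hom}\circ\delta^n_{Hom}=0$ for the bimodule-valued complex $C^*_{\alpha,\alpha'}(\mathcal{A},\mathcal{B})$. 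What remains is to show that
$$\phi\circ(\delta^n_{Hom}\varphi_1)-(\delta^n_{Hom}\varphi_2)\circ\phi^{\otimes(n+1)}=\delta^n_{Hom}\bigl(\phi\circ\varphi_1-\varphi_2\circ\phi^{\otimes n}\bigr).$$
I would prove this by expanding all three terms using the explicit formulas \eqref{hom} and \eqref{hom-morphism} and matching them term by term. The key identities driving the cancellation are that $\phi$ is a Hom-associative morphism, so $\mu'\circ(\phi\otimes\phi)=\phi\circ\mu$ and $\phi\circ\alpha=\alpha'\circ\phi$ (hence $\phi\circ\alpha^{n-1}=\alpha'^{\,n-1}\circ\phi$), together with the definition of the adjoint actions $\rho_l=\mu'(\phi\otimes\mathrm{id})$ and $\rho_r=\mu'(\mathrm{id}\otimes\phi)$ appearing in \eqref{hom-morphism}.

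Concretely, I would track the boundary and interior terms separately. In $\delta^n_{Hom}(\phi\circ\varphi_1)$ the leftmost term is $\mu'(\phi(\alpha^{n-1}(x_0)),\phi(\varphi_1(x_1,\dots,x_n)))$, which by multiplicativity of $\phi$ equals $\phi(\mu(\alpha^{n-1}(x_0),\varphi_1(x_1,\dots,x_n)))$, i.e. exactly $\phi$ applied to the leftmost term of $\delta^n_{Hom}\varphi_1$; the interior $\mu(x_{k-1},x_k)$ terms transport across $\phi$ by the same morphism property, and the cochain compatibility $\alpha'\circ(\phi\circ\varphi_1)=(\phi\circ\varphi_1)\circ\alpha^{\otimes n}$ keeps everything in the right cochain space. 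The analogous expansion of $\delta^n_{Hom}(\varphi_2\circ\phi^{\otimes n})$ matches $(\delta^n_{Hom}\varphi_2)\circ\phi^{\otimes(n+1)}$ after pushing each $\phi$ through $\mu$ and each $\alpha'$ through $\phi$. The main obstacle is purely organizational: this is a bookkeeping argument in which one must align the boundary terms (where the bimodule actions $\rho_l,\rho_r$ intervene) with the images under $\phi$, and confirm that every power of $\alpha$ versus $\alpha'$ lands correctly via $\phi\circ\alpha^{k}=\alpha'^{\,k}\circ\phi$. No single step is deep, but the sign and index matching across the three coboundary formulas is where errors would most easily creep in, so I would carry out that term-by-term comparison with care.
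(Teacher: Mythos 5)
Your proposal is correct and follows essentially the same route as the paper's proof: reduce to the third slot, kill $\delta^{n}_{Hom}\circ\delta^{n-1}_{Hom}\varphi_{3}$ by the bimodule-valued complex property, and verify the two identities $\phi\circ(\delta^{n}_{Hom}\varphi_{1})=\delta^{n}_{Hom}(\phi\circ\varphi_{1})$ and $(\delta^{n}_{Hom}\varphi_{2})\circ\phi^{\otimes(n+1)}=\delta^{n}_{Hom}(\varphi_{2}\circ\phi^{\otimes n})$ via the morphism properties $\mu'\circ(\phi\otimes\phi)=\phi\circ\mu$ and $\phi\circ\alpha=\alpha'\circ\phi$. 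You in fact carry out the term-by-term matching more explicitly than the paper, which merely asserts these identities (and contains a sign typo your version avoids).
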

\begin{proof}
The most-right component of $(\delta^{n+1}\circ\delta^{n})
(\varphi_{1},\varphi_{2},\varphi_{3})$ is
$\phi\circ(\delta^{n}_{Hom}\varphi_{1})-
(\delta^{n}_{Hom}\varphi_{2})\circ\phi-\delta^{n}_{Hom}(\phi\circ\varphi_{1}-
\varphi_{2}\circ\phi^{\otimes n}-\delta^{n-1}_{Hom}\varphi_{3}) =\phi\circ(\delta^{n}_{Hom}\varphi_{1})-
(\delta^{n}_{Hom}\varphi_{2})
\circ\phi^{\otimes n+1}-\delta^{n}_{Hom}(\phi\circ\varphi_{1})-\delta^{n}_{Hom}(\varphi_{2}\circ\phi^{\otimes n})$.
To finish the proof, one checks that $\phi\circ(\delta^{n}_{Hom}\varphi_{1})=\delta^{n}_{Hom}(\phi\circ\varphi_{1})$
and $(\delta^{n}_{Hom}\varphi_{2})\circ\phi^{\otimes n+1}=\delta^{n}_{Hom}(\varphi_{2}\circ\phi^{\otimes n})$.
Indeed,
$\phi\circ\varphi_{1}$ is defined as follows:
$(\phi\circ\varphi_{1})(x_{0},\cdots,x_{n})=\phi\circ(\varphi_{1}(x_{0},\cdots,x_{n}))$
and
$\varphi_{2}\circ\phi^{\otimes n}$  as 
$\varphi_{2}\circ\phi(x_{0},\cdots,x_{n})=\varphi_{2}\circ(\phi(x_{1}),\cdots,\phi(x_{n})).$
\end{proof}

\begin{prop}
If $H^{n}_{Hom}(\mathcal{A},\mathcal{A})$, $H^{n}_{Hom}(\mathcal{B},\mathcal{B})$
and $H^{n-1}_{\alpha,\alpha'}(\mathcal{A},\mathcal{B})$ are all trivial then so is $H^{n}_{Hom}(\phi,\phi)$.
\end{prop}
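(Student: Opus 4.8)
The plan is to run a direct diagram chase, reconstructing a coboundary preimage one component at a time and invoking the two intertwining identities already established in the preceding theorem. Let $(\varphi_1,\varphi_2,\varphi_3)\in C^n_{Hom}(\phi,\phi)$ be an arbitrary $n$-cocycle. Unwinding the definition of $\delta^n$, this says exactly that $\delta^n_{Hom}\varphi_1=0$, that $\delta^n_{Hom}\varphi_2=0$, and that
$$\phi\circ\varphi_1-\varphi_2\circ\phi^{\otimes n}-\delta^{n-1}_{Hom}\varphi_3=0.$$
In particular $\varphi_1$ and $\varphi_2$ are themselves cocycles in the Hochschild complexes of $\mathcal{A}$ and $\mathcal{B}$.

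First I would exploit the hypotheses $H^n_{Hom}(\mathcal{A},\mathcal{A})=0$ and $H^n_{Hom}(\mathcal{B},\mathcal{B})=0$ to obtain cochains $\psi_1\in C^{n-1}_{Hom}(\mathcal{A},\mathcal{A})$ and $\psi_2\in C^{n-1}_{Hom}(\mathcal{B},\mathcal{B})$ with $\delta^{n-1}_{Hom}\psi_1=\varphi_1$ and $\delta^{n-1}_{Hom}\psi_2=\varphi_2$. These already match the first two slots of any candidate coboundary $\delta^{n-1}(\psi_1,\psi_2,\psi_3)$, independently of the choice of $\psi_3$ still to be made.

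The heart of the argument, and the only place that requires a genuine computation, is to verify that
$$\eta:=\phi\circ\psi_1-\psi_2\circ\phi^{\otimes(n-1)}-\varphi_3\in C^{n-1}_{\alpha,\alpha'}(\mathcal{A},\mathcal{B})$$
is closed. Applying $\delta^{n-1}_{Hom}$ and using the two intertwining relations from the previous theorem with $n$ replaced by $n-1$, namely $\delta^{n-1}_{Hom}(\phi\circ\psi_1)=\phi\circ(\delta^{n-1}_{Hom}\psi_1)$ and $\delta^{n-1}_{Hom}(\psi_2\circ\phi^{\otimes(n-1)})=(\delta^{n-1}_{Hom}\psi_2)\circ\phi^{\otimes n}$, followed by the substitutions $\delta^{n-1}_{Hom}\psi_1=\varphi_1$ and $\delta^{n-1}_{Hom}\psi_2=\varphi_2$, I obtain
$$\delta^{n-1}_{Hom}\eta=\phi\circ\varphi_1-\varphi_2\circ\phi^{\otimes n}-\delta^{n-1}_{Hom}\varphi_3.$$
The right-hand side is precisely the third cocycle condition recorded above, hence vanishes, so $\eta$ is a cocycle in $C^{*}_{\alpha,\alpha'}(\mathcal{A},\mathcal{B})$.

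Finally, the hypothesis $H^{n-1}_{\alpha,\alpha'}(\mathcal{A},\mathcal{B})=0$ yields $\psi_3\in C^{n-2}_{\alpha,\alpha'}(\mathcal{A},\mathcal{B})$ with $\delta^{n-2}_{Hom}\psi_3=\eta$. Reading off the third component of $\delta^{n-1}(\psi_1,\psi_2,\psi_3)$ then gives $\phi\circ\psi_1-\psi_2\circ\phi^{\otimes(n-1)}-\delta^{n-2}_{Hom}\psi_3=\varphi_3$, so that $\delta^{n-1}(\psi_1,\psi_2,\psi_3)=(\varphi_1,\varphi_2,\varphi_3)$ and the cocycle is a coboundary. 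As the cocycle was arbitrary, $Z^n=B^n$ and therefore $H^n_{Hom}(\phi,\phi)=0$. I expect the verification that $\eta$ is closed to be the sole obstacle; the remaining steps are formal lifting arguments, and that step collapses cleanly onto the already-proven intertwining identities, so no essentially new calculation is needed. (One may note in passing that the whole statement is the degenerate case of the long exact sequence attached to the mapping-cone description of $C^{*}_{Hom}(\phi,\phi)$, but the explicit chase above is shorter and self-contained.)
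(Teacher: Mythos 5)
Your proof is correct, and it is essentially the argument the paper itself invokes: the paper omits the details and refers to Proposition 3.3 of Yau's dialgebra-morphism paper, whose proof is exactly this diagram chase --- lift $\varphi_{1},\varphi_{2}$ to $\psi_{1},\psi_{2}$, show $\phi\circ\psi_{1}-\psi_{2}\circ\phi^{\otimes(n-1)}-\varphi_{3}$ is a cocycle in $C^{n-1}_{\alpha,\alpha'}(\mathcal{A},\mathcal{B})$ via the two intertwining identities $\delta^{n-1}_{Hom}(\phi\circ\psi_{1})=\phi\circ\delta^{n-1}_{Hom}\psi_{1}$ and $\delta^{n-1}_{Hom}(\psi_{2}\circ\phi^{\otimes(n-1)})=(\delta^{n-1}_{Hom}\psi_{2})\circ\phi^{\otimes n}$, and lift it using $H^{n-1}_{\alpha,\alpha'}(\mathcal{A},\mathcal{B})=0$. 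The only cosmetic omission is the (routine) check that your $\eta$ satisfies the $(\alpha,\alpha')$-compatibility $\alpha'\circ\eta=\eta\circ\alpha^{\otimes(n-1)}$, which follows from $\phi\circ\alpha=\alpha'\circ\phi$ and the compatibility of $\psi_{1},\psi_{2},\varphi_{3}$.
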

The proof is similar to that of Proposition $3.3$ in \cite{d}.\\
We call the $n^{th}$ Hochschild cohomology group of the Hom-associative
algebra morphism $\phi:\mathcal{A}\rightarrow \mathcal{B}$,
$$H^{n}_{Hom}(\phi,\phi)=H^{n}_{Hom}(\mathcal{A},\mathcal{A})\times H^{n}_{Hom}(\mathcal{B},\mathcal{B})\times
H^{n-1}_{\alpha,\alpha'}(\mathcal{A},\mathcal{B}).$$

The corresponding cohomology modules of the cochain complex $(C^{*}_{Hom}(\phi,\phi),\delta^{n})$
are denoted by
$H^{n}_{Hom}(\phi,\phi):=H^{n}_{Hom}(C^{*}_{Hom}(\phi,\phi),\delta).$\\

The next task is to define a composition product on cochains of   Hom-associative algebra morphisms.
Regard $\mathcal{B}$ as a $\mathcal{A}$-bimodule and let $C^{p}(\mathcal{A},\mathcal{B})$
be the vector space of all $p$-linear maps of $\mathcal{A}$ into $\mathcal{B}$.
Set $C^{p}=0$ if $p<0$ and let $C(\mathcal{A},\mathcal{B})=\bigoplus\limits_{p\in \mathbb{Z}}C^{p}(\mathcal{A},\mathcal{B})$.
\begin{defn}
Let $\phi:\mathcal{A}\longrightarrow\mathcal{B}$ be a Hom-associative algebra
morphism, $\varphi_{1}
\in C^{a}(\mathcal{B},\mathcal{B})$ and $\varphi_{2}\in C^{b}(\mathcal{A},\mathcal{B})$.
We define the composition product
$\varphi_{1}\overline{\circ}\varphi_{2} \in C^{a+b}(\mathcal{B},\mathcal{B})$ by
\begin{equation}\label{cup}
\varphi_{1}\overline{\circ}\varphi_{2}=\sum\limits_{i=0}^{b}
(-1)^{i(a-1)}\varphi_{1}(\phi(x_{1}),\cdots,\phi(x_{i-1}),\varphi_{2}
(x_{i},\cdots,x_{i+b-1}),\phi(x_{i+b}),\cdots,\phi(x_{a+b})).
\end{equation}
\end{defn}
\begin{defn}
Define the homogenous derivation $D:C(\mathcal{A},\mathcal{B})\rightarrow C(\mathcal{A},\mathcal{B})$ of degree $1$ as follows. Let
$\varphi\in C^{n}(\mathcal{A},\mathcal{B})$, we set
\begin{equation*}
  D\varphi(x_{0},\cdots,x_{n})=\sum\limits_{i=1}^{n}(-1)^{i}\varphi
  (\alpha(x_{0}),\cdots,\mu_{\mathcal{A}}(x_{i-1},x_{i}),
  \cdots,\alpha(x_{n})).
\end{equation*}
\end{defn}
\begin{defn}
The cup product is a  bilinear map $\smile :C(\mathcal{A},\mathcal{B})\times
C(\mathcal{A},\mathcal{B})\rightarrow C(\mathcal{A},\mathcal{B})$ defined by
$\varphi\smile\psi(x_{0},\cdots,x_{a+b-1})=\mu_{B}(\varphi(x_{0},\cdots,x_{a-1}),\psi(x_{a},
\cdots,x_{a+b-1}))$
for all  $\varphi\in C^{a}(\mathcal{A},\mathcal{B}),~\psi\in C^{b}(\mathcal{A},\mathcal{B})$
and for all $x_{0},\cdots,x_{a+b-1}\in\mathcal{A}$.
\end{defn}
\begin{prop}
Let $[\varphi,\psi]^{\smile}=\varphi\smile\psi-(-1)^{ab}\psi\smile\varphi$.
Then, the pair $(C(\mathcal{A},\mathcal{B}),[\cdot, \cdot]^{\smile})$ defines a graded Lie algebra.
\end{prop}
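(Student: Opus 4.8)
The plan is to recognize $[\cdot,\cdot]^{\smile}$ as the graded commutator attached to the degree-preserving product $\smile$ on $C(\mathcal{A},\mathcal{B})=\bigoplus_{p}C^{p}(\mathcal{A},\mathcal{B})$, and then to invoke the general principle that the graded commutator of a graded associative product is always a graded Lie bracket. Thus the proof splits into two parts: graded skew-symmetry, which is formal, and the graded Jacobi identity, which I would derive from associativity of $\smile$.

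Graded skew-symmetry needs no computation: for $\varphi\in C^{a}$ and $\psi\in C^{b}$,
\[
[\varphi,\psi]^{\smile}=\varphi\smile\psi-(-1)^{ab}\psi\smile\varphi
=-(-1)^{ab}\bigl(\psi\smile\varphi-(-1)^{ab}\varphi\smile\psi\bigr)
=-(-1)^{ab}[\psi,\varphi]^{\smile},
\]
using $(-1)^{2ab}=1$. For the graded Jacobi identity I would first establish that $\smile$ is associative, i.e. $(\varphi\smile\psi)\smile\chi=\varphi\smile(\psi\smile\chi)$ for $\varphi\in C^{a}$, $\psi\in C^{b}$, $\chi\in C^{c}$. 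Granting this, the identity
\[
(-1)^{ac}[[\varphi,\psi]^{\smile},\chi]^{\smile}+(-1)^{ba}[[\psi,\chi]^{\smile},\varphi]^{\smile}+(-1)^{cb}[[\chi,\varphi]^{\smile},\psi]^{\smile}=0
\]
follows by expanding each double bracket into six iterated cup products and checking that graded associativity makes them cancel in pairs; this is purely a sign-bookkeeping exercise that I would not write out in full.

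The main obstacle is the associativity of $\smile$. Unwinding the definition of the cup product, associativity is equivalent to
\[
\mu_{B}\bigl(\mu_{B}(u,v),w\bigr)=\mu_{B}\bigl(u,\mu_{B}(v,w)\bigr),
\]
whereas $\mathcal{B}$ is only \emph{Hom}-associative, so a priori one controls only the twisted identity $\mu_{B}(\alpha'(u),\mu_{B}(v,w))=\mu_{B}(\mu_{B}(u,v),\alpha'(w))$. To close this gap I would work on the subcomplex of Hom-cochains, where each map satisfies $\alpha'\circ\varphi=\varphi\circ\alpha^{\otimes a}$, and use the multiplicativity of $\alpha'$ to move the required twist through the arguments of $\varphi$, $\psi$, $\chi$, rewriting both iterated products in the common $\alpha'$-twisted form permitted by Hom-associativity. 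Reconciling the untwisted cup product with the twisted associativity of $\mathcal{B}$ is the genuinely delicate point; by contrast, verifying that $[\cdot,\cdot]^{\smile}$ is $\mathbb{K}$-bilinear and lands in the correct graded piece $C^{a+b}(\mathcal{A},\mathcal{B})$ is immediate from the definition of $\smile$.
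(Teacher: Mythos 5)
Your reduction of the graded Jacobi identity to associativity of $\smile$ is the right skeleton, and the skew-symmetry computation is fine, but the step you yourself single out as delicate --- recovering associativity of $\smile$ from Hom-associativity --- is a genuine gap, and it cannot be closed by the repair you sketch. Restricting to Hom-cochains and using multiplicativity of $\alpha'$ converts Hom-associativity of $\mu_{B}$ only into the twisted relation
\[
(\varphi\smile\psi)\smile(\alpha'\circ\chi)\;=\;(\alpha'\circ\varphi)\smile(\psi\smile\chi),
\]
where for a Hom-cochain one may rewrite $\alpha'\circ\chi=\chi\circ\alpha^{\otimes c}$. The twist lands on a \emph{cochain}, not on an argument slot you are free to re-index: $\alpha'\circ\chi$ is a different element of $C^{c}(\mathcal{A},\mathcal{B})$ from $\chi$ unless $\alpha'=\mathrm{id}$, so no amount of pushing $\alpha$ through the slots of $\varphi,\psi,\chi$ yields $(\varphi\smile\psi)\smile\chi=\varphi\smile(\psi\smile\chi)$. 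Consequently the standard ``graded commutator of a graded associative product'' principle, which is the engine of your whole argument, is not available.

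Moreover the failure is not just in your argument: the untwisted graded Jacobi identity you aim at is false in general for this bracket. Already in degree zero, $C^{0}(\mathcal{A},\mathcal{B})\cong\mathcal{B}$ and $[\cdot,\cdot]^{\smile}$ is the plain commutator of $\mu_{B}$; for a Hom-associative algebra the commutator satisfies only the Hom-Jacobi identity $\circlearrowleft_{x,y,z}[\alpha'(x),[y,z]]=0$ --- this is exactly the Makhlouf--Silvestrov theorem recalled in Section 1 and in \cite{ms} --- because the alternating sum of associators occurring in the ordinary Jacobi expression need not vanish when $\alpha'\neq\mathrm{id}$. Note that the paper states this proposition without any proof, so there is no argument to compare yours against; but the consistent Hom-theoretic route, the one actually used for the Gerstenhaber-type bracket built from $j^{\alpha}_{\varphi}$ in \eqref{cup produit 1} following \cite{aem}, is to insert the appropriate powers of the structure map \emph{into the product itself}, so that Hom-associativity, rather than associativity, closes the computation (yielding a graded Lie structure in the twisted sense). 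As written, your proof establishes the proposition only in the untwisted case where $\mu_{B}$ is honestly associative, e.g. $\alpha'=\mathrm{id}$; in the genuinely Hom-associative setting either the cup product \eqref{cup} must be modified by $\alpha$-twists or the conclusion must be weakened to a Hom-type graded Jacobi identity.
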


\begin{example}
We consider a $3$-dimensional  Hom-associative algebra $A$ defined in \cite{Ms}, with respect to a basis
 $\{e_{1},e_{2},e_{3}\}$, by the multiplication $\mu_A$ and the linear map $\alpha_A$ such that
$$\small{
\begin{array}{llll}
&\mu_A(e_{1},e_{1})=ae_{1},~~~~~~&\mu_A(e_{2},e_{2})=ae_{2},\\[0.1pt]
&\mu_A(e_{1},e_{2})=\mu_A(e_{2},e_{1})=ae_{2},~~&\mu_A(e_{2},e_{3})=be_{3},\\[0.1pt]
&\mu_A(e_{1},e_{3})=\mu_A(e_{3},e_{1})=be_{3},~~&\mu_A(e_{3},e_{2})=\mu_A(e_{3},e_{3})=0,
\end{array}}
$$
$$\alpha_A(e_{1})=ae_{1},~\alpha_A(e_{2})=ae_{2},~\alpha_A(e_{3})=be_{3},$$
where $a,b,c$ are parameters.

We consider also a $2$-dimensional  Hom-associative algebra $B$ defined, with respect to a basis  $\{f_{1},f_{2}\}$,  by
the multiplication $\mu_{B}$ and the linear map $\alpha_{B}$  such that
$$ \mu_{B}(f_{1},f_{1})=f_{1},\quad
  \mu_{B}(f_{i},f_{j})=f_{2} \text{ for }\{i,j\}\not\in\{1,1\},$$
$$\alpha_B(f_1)=\beta f_1-\beta f_2, \quad \alpha_B (f_2)=0,$$
where $\beta$ is a parameter.

Let $\phi:\mathcal{A}\rightarrow \mathcal{B}$ be a  Hom-associative algebra morphism. It is
wholly determined by a set of structure constants $\lambda_{i,j}$, such  that $\phi(e_{j})=\sum\limits_{j=1}^{3}\lambda_{i,j}f_{i}$.
It turns out that it is defined as
$$
\phi(e_{1})=f_{1}-f_{2}, \quad
\phi(e_{2})=f_{1}-f_{2}, \quad
\phi(e_{3})=0,$$ with $a=\beta=1$.\\
In the following, we compute the second cohomology spaces  $H^{2}_{Hom}(\mathcal{A},\mathcal{A})$
and $H^{2}_{Hom}(\mathcal{B},\mathcal{B})$.
Let $\psi\in Z^{2}_{Hom}(\mathcal{A},\mathcal{A})$. The $2$-cocycle $\psi : \mathcal{A}\otimes\mathcal{A}\rightarrow\mathcal{A}$
is a linear map satisfying $\delta^{2}\psi(e_{i}, e_{j}, e_{k}) = 0$ and
$\psi(\alpha(e_{i}),\alpha(e_{j}))= \alpha(\psi(e_{i},e_{j}))$, for $1\leq i, j,k\leq 3$.
Therefore, with $a=1$, we get the following 2-cocycles
\begin{small}
$$\left\{\begin{array}{ll}
\psi(e_{1},e_{1})=x_{1}e_{1}+(\frac{x_{2}}{b}-x_{1})e_{2}\\
\psi(e_{2},e_{1})=\frac{x_{2}}{b}e_{2}\\
\psi(e_{3},e_{1})=bx_{1}e_{3}
\end{array}\right.\quad \left\{\begin{array}{ll}
\psi(e_{1},e_{2})=\frac{x_{2}}{b}e_{2}\\
\psi(e_{2},e_{2})=x_{3}e_{1}+x_{4}e_{2}\\
\psi(e_{3},e_{2})=-bx_{3}e_{3}
\end{array}\right.\quad\left\{\begin{array}{ll}
\psi(e_{1},e_{3})=x_{2}e_{3}\\
\psi(e_{2},e_{3})=b(x_{3}+x_{4})e_{3}e_{3}\\
\psi(e_{3},e_{3})=0
\end{array}\right.
$$
\end{small}
We prove that all the 2-cocycles are 2-coboundaries. 
Let $\varphi_{A}\in B_{Hom}^{2}(\mathcal{A},\mathcal{A})$,
then there is a $1$-cochain $f \in C^{1}_{Hom}(\mathcal{A},\mathcal{A})= Hom(\mathcal{A},\mathcal{A})$
such that $\varphi_{A}=\delta _{A}f$ and $f(\alpha(e_{i}))=\alpha(f(e_{i}))$. Set
$f(e_{1}) = x_{1}e_{1} + y_{1}e_{2};~f(e_{2})=x_{2}e_{1} + y_{2}e_{2};~f(e_{3}) =z_{3}e_{3}$ and
$
 \delta_{A} f(e_{i},e_{j})=\mu_{A}(f(e_{i}),e_{j}))+\mu_{A}
 (e_{i},f(e_{j}))-f(\mu(e_{i},e_{j}))
$.
Therefore
\begin{align*}
 & \delta_{A}f(e_{1},e_{1})=x_{1}e_{1}+y_{1}e_{2},\delta_{A}f(e_{1},e_{2})=(x_{1}+y_{1})e_{2},
\delta_{A}f(e_{1},e_{3})=b(x_{1}+y_{1})e_{3}, \\
& \delta_{A}f(e_{2},e_{1})=(x_{1}+y_{1})e_{2},\delta_{A}f(e_{2},e_{2})=-x_{2}e_{1}+(2x_{2}+y_{2})e_{2},
 \delta_{A}f(e_{2},e_{3})=b(x_{2}+y_{2})e_{3},\\
& \delta_{A}f(e_{3},e_{1})=bx_{1}e_{3},\delta_{A}f(e_{3},e_{2})=b x_{2}e_{3},
\delta_{A}f(e_{3},e_{3})=0.
\end{align*}
Using the fact
$\varphi_{A}(\alpha(e_{i}), \alpha(e_{j})) = \alpha(\varphi_{A}(e_{i}, e_{j}))$, we obtain
\begin{small}
$$\left\{\begin{array}{l}
\varphi_{A}(e_{1},e_{1})=x_{1}e_{1}+y_{1}e_{2}\\
\varphi_{A}(e_{2},e_{1})=(x_{1}+y_{1})e_{2}\\
\varphi_{A}(e_{3},e_{1})=bx_{1}e_{3}
\end{array}\right.\quad\left\{\begin{array}{l}
\varphi_{A}(e_{1},e_{2})=(x_{1}+y_{1})e_{2}\\
\varphi_{A}(e_{2},e_{2})=-x_{2}e_{1}+(2x_{2}+y_{2})e_{2}\\
\varphi_{A}(e_{3},e_{2})=bx_{2}e_{3}
\end{array}\right.\quad\left\{\begin{array}{l}
\varphi_{A}(e_{1},e_{3})=b(x_{1}+y_{1})e_{3}\\
\varphi_{A}(e_{2},e_{3})=b(x_{2}+y_{2})e_{3}\\
\varphi_{A}(e_{3},e_{3})=0
\end{array}\right.
$$ \end{small}
Therefore $H^{2}_{Hom}(\mathcal{A},\mathcal{A})=0.$
Now, we do similar computations for $\mathcal{B}$. We get
$$B^{2}_{Hom}(\mathcal{B},\mathcal{B})=\{\psi=\delta^{1}_{B}\varphi_{2}\mid \psi(f_{1},f_{1})=cf_{1}+df_{2},
\psi(f_{i},f_{j})=(c+d)f_{2},(i,j)\neq(1,1)\},$$
$$Z^{2}_{Hom}(\mathcal{B},\mathcal{B})=\{\psi\mid\psi(f_{1},f_{1})=cf_{1}+df_{2},\psi(f_{i},f_{j})=zf_{2},(i,j)\neq(1,1)\},$$
$$H^{2}_{Hom}(\mathcal{B},\mathcal{B})=\{\psi\mid\psi(f_{1},f_{1})=cf_{1}+df_{2}, \psi(f_{i},f_{j})=zf_{2}, z\neq(c+d), (i,j)\neq(1,1)\},$$
$$Z^{1}_{Hom}(\mathcal{A},\mathcal{B})=\{\phi\in C^{1}_{Hom}(\mathcal{A},\mathcal{B})
\mid\phi\circ\psi_{1}-\psi_{2}\circ\phi-\delta^{1}\phi=0\mid
\psi_{1}\in Z^{2}_{Hom}(\mathcal{A},\mathcal{A}),~\psi_{2}\in Z^{2}_{Hom}(\mathcal{B},\mathcal{B})\}.$$
Let $\phi_{1}\in H_{Hom}^{1}(\mathcal{A},\mathcal{B})$, it turns out that it  is defined by
$$\phi_{1}(e_{1})=(\frac{x_{2}}{b}-c)f_{1}+(c-\frac{x_{2}}{b})f_{2};~\phi_{1}(e_{2})=
(x_{3}+x_{4}-c)f_{1}+(c-x_{3}-x_{4})f_{2};~\phi_{1}(e_{3})=0,$$
 with $c+d=z$ and $x_{3}+x_{4}=c$.\\

In summary, we have
\begin{align*}
& H^{2}_{Hom}(\mathcal{A},\mathcal{A})=\{0\},\\
& H^{2}_{Hom}(\mathcal{B},\mathcal{B})=\{\psi\mid\psi(f_{1},f_{1})=cf_{1}+df_{2}, \psi(f_{i},f_{j})=zf_{2}, z\neq(c+d), (i,j)\neq(1,1)\}\\
& H_{Hom}^{1}(\mathcal{A},\mathcal{B})=\{\phi_{1}\mid\phi_{1}(e_{1})=(\frac{x_{2}}{b}-c)f_{1}+(c-\frac{x_{2}}{b})f_{2}, \\
& \hspace{3cm} \phi_{1}(e_{2})=(x_{3}+x_{4}-c)f_{1}+(c-x_{3}-x_{4})f_{2};~\phi_{1}(e_{3})=0
\}.
\end{align*}
\end{example}

\subsection{Cohomology complex of Hom-Lie algebra morphisms }
The original cohomology theory of  Lie algebra morphisms was  developed by Fr\'{e}gier in \cite{f}.
In this section, we will generalize this theory  to Hom-Lie algebras. We adopt the same notations as in in \cite{f}.
Consider the product  $\diamond$ defined  for  $\lambda\in \widetilde{C}^{n}_{\alpha,\alpha'}
(\mathcal{L}',\mathcal{L}')$ and $\phi\in Hom(\mathcal{L},\mathcal{L}')$, by  $\lambda\diamond\phi\in C^{n}_{HL}(\mathcal{L},\mathcal{L}')$ by
 $\lambda\diamond\phi(x_{1},\cdots,x_{n})=\lambda(\phi(x_{1}),\cdots,\phi(x_{n}))$,
where $x_{1},\cdots,x_{n}\in \mathcal{L}$.\\
Let $\phi:\mathcal{L}\rightarrow\mathcal{L}'$ be  a Hom-Lie algebra morphism.
Regard $\mathcal{L}'$ as a representation of $\mathcal{L}$ via $\phi$ wherever
appropriate.
Define the module of $n$-cochains of $\phi$ by
\begin{equation*}
C^{n}_{HL}(\phi,\phi)=C^{n}_{HL}(\mathcal{L},\mathcal{L})\times C^{n}_{HL}(\mathcal{L}',\mathcal{L}')
\times \widetilde{C}^{n-1}_{\alpha,\alpha'}(\mathcal{L},\mathcal{L}').
\end{equation*}
The coboundary operator $\delta^{n}(\phi,\phi):C^{n}_{HL}(\phi,\phi)\rightarrow C^{n+1}_{HL}(\phi,\phi)$ is
defined by 
$$\delta^{n}(\varphi_{1},\varphi_{2},\varphi_{3})=(\delta^{n}_{HL}\varphi_{1},
\delta^{n}_{HL}\varphi_{2},\delta^{n-1}_{HL}\varphi_{3}+(-1)^{n-1}(\phi\circ\varphi_{1}-
\varphi_{2}\diamond\phi)),$$
where $\delta^{n}\varphi_{1}$ and $\delta^{n}\varphi_{2}$ are  given  by formula  \eqref{alpha} and $\delta^{n}\varphi_{3}$  by formula
 \eqref{phi}.\\
\begin{lem}We have
$\delta^{n+1}\circ\delta^{n}=0$.
 Hence $(C^{n}_{HL}(\phi,\phi),\delta^{n})$ is  a cohomology complex.
\end{lem}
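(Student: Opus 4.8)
The plan is to verify that $\delta^{n+1}\circ\delta^n=0$ componentwise on a triple $(\varphi_1,\varphi_2,\varphi_3)\in C^n_{HL}(\phi,\phi)$. The coboundary has three slots, and the image lands in $C^{n+1}_{HL}(\phi,\phi)=C^{n+1}_{HL}(\mathcal{L},\mathcal{L})\times C^{n+1}_{HL}(\mathcal{L}',\mathcal{L}')\times\widetilde{C}^{n}_{\alpha,\alpha'}(\mathcal{L},\mathcal{L}')$. The first two components of $\delta^{n+1}\circ\delta^n$ are simply $\delta^{n+1}_{HL}\delta^n_{HL}\varphi_1$ and $\delta^{n+1}_{HL}\delta^n_{HL}\varphi_2$, which vanish by the Theorem already established in this section asserting $\delta^{n+1}_{HL}\circ\delta^n_{HL}=0$ for the Chevalley--Eilenberg complex. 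So the only genuine work is in the third (mixed) component.

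For the third component I would expand $\delta^{n+1}\bigl(\delta^n(\varphi_1,\varphi_2,\varphi_3)\bigr)$ directly from the definition. Writing $\psi_3:=\delta^{n-1}_{HL}\varphi_3+(-1)^{n-1}(\phi\circ\varphi_1-\varphi_2\diamond\phi)$ for the third slot of $\delta^n(\varphi_1,\varphi_2,\varphi_3)$, the third slot of the outer coboundary is
\begin{equation*}
\delta^{n}_{HL}\psi_3+(-1)^{n}\bigl(\phi\circ\delta^n_{HL}\varphi_1-(\delta^n_{HL}\varphi_2)\diamond\phi\bigr).
\end{equation*}
Substituting $\psi_3$ and using $\delta^n_{HL}\delta^{n-1}_{HL}\varphi_3=0$ (again from the Theorem), this reduces to
\begin{equation*}
(-1)^{n-1}\delta^n_{HL}(\phi\circ\varphi_1-\varphi_2\diamond\phi)+(-1)^{n}\bigl(\phi\circ\delta^n_{HL}\varphi_1-(\delta^n_{HL}\varphi_2)\diamond\phi\bigr).
\end{equation*}
Hence the whole expression vanishes precisely when $\delta^n_{HL}$ commutes with the two transport operations, namely $\delta^n_{HL}(\phi\circ\varphi_1)=\phi\circ\delta^n_{HL}\varphi_1$ and $\delta^n_{HL}(\varphi_2\diamond\phi)=(\delta^n_{HL}\varphi_2)\diamond\phi$.

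Establishing these two commutation identities is the main obstacle, and I would prove them by a term-by-term comparison of the Chevalley--Eilenberg formula \eqref{phi} against \eqref{alpha}. For $\delta^n_{HL}(\phi\circ\varphi_1)=\phi\circ\delta^n_{HL}\varphi_1$ the key inputs are that $\phi$ is a Hom-Lie morphism, so it intertwines the brackets, $\phi\circ[\cdot,\cdot]=[\cdot,\cdot]'\circ(\phi\otimes\phi)$, and commutes with the twisting maps, $\phi\circ\alpha=\alpha'\circ\phi$; applying these to each summand lets the $\phi$ pass through both the bracket terms and the inner $\varphi_1$-evaluations, with the representation of $\mathcal{L}$ on $\mathcal{L}'$ being exactly the one induced by $\phi$ in Proposition \ref{representation}. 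For $\delta^n_{HL}(\varphi_2\diamond\phi)=(\delta^n_{HL}\varphi_2)\diamond\phi$ the same morphism properties let each $\phi(x_i)$ be carried inside, matching the action terms $[\phi(\alpha^{n-1}(x_i)),-]'$ against $[\alpha'^{\,n-1}(\phi(x_i)),-]'$ via $\phi\circ\alpha=\alpha'\circ\phi$, and matching $\varphi_2([\phi(x_i),\phi(x_j)]',\dots)$ against $\varphi_2\diamond\phi$ evaluated on $[x_i,x_j]$ via the bracket compatibility. I expect the bookkeeping of signs and of the hatted omitted arguments to be the delicate part, but the structure is entirely parallel to the associative case treated in the previous subsection, where the analogous identities $\phi\circ(\delta^n_{Hom}\varphi_1)=\delta^n_{Hom}(\phi\circ\varphi_1)$ were verified, so no new conceptual difficulty arises.
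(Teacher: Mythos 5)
Your proof is correct and follows essentially the same route as the paper: both reduce the claim to the third (mixed) component, kill $\delta^{n}_{HL}\circ\delta^{n-1}_{HL}\varphi_{3}$ by the cochain-complex theorem already proved, and finish with the two commutation identities $\delta^{n}_{HL}(\phi\circ\varphi_{1})=\phi\circ\delta^{n}_{HL}\varphi_{1}$ and $\delta^{n}_{HL}(\varphi_{2}\diamond\phi)=(\delta^{n}_{HL}\varphi_{2})\diamond\phi$, verified termwise from $\phi\circ[\cdot,\cdot]=[\cdot,\cdot]'\circ(\phi\otimes\phi)$ and $\phi\circ\alpha=\alpha'\circ\phi$. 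Note that your signs are the correct ones: the paper states these identities with stray minus signs, which must be typos, since with those signs the third component would equal $2(-1)^{n}\bigl(\phi\circ\delta^{n}_{HL}\varphi_{1}-(\delta^{n}_{HL}\varphi_{2})\diamond\phi\bigr)$ rather than $0$.
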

\begin{proof}
The most right  component of $(\delta^{n+1}\circ\delta^{n})
(\varphi_{1},\varphi_{1},\varphi_{3})$ is $(-1)^{n-1}\delta^{n}_{HL}[
\phi\circ\varphi_{1}-\varphi_{2}\diamond\phi]+(-1)^{n}
[\phi\circ\delta^{n}_{HL}(\varphi_{1})-\delta^{n}_{HL}(\varphi_{2})\diamond\phi]$.
To finish the proof, one checks that
$\delta^{n}_{HL}(\phi\circ\varphi_{1})(x_{0},\cdots,x_{n})
=-\phi\circ\delta^{n}_{HL}(\varphi_{1})(x_{0},\cdots,x_{n})$ and
$\delta^{n}_{HL}(\varphi_{2}\diamond\phi)=-\delta^{n}_{HL}(\varphi_{2})\diamond\phi$.
Indeed,\\
$\phi\circ\varphi_{1}$ is defined as
$(\phi\circ\varphi_{1})(x_{0},\cdots,x_{n})=\phi\circ(\varphi_{1}(x_{0},\cdots,x_{n}))$
and $\varphi_{2}\circ\phi^{\otimes n}$ as\\
$\varphi_{2}\diamond\phi(x_{0},\cdots,x_{n})=\varphi_{2}\circ(\phi(x_{1}),\cdots,\phi(x_{n})).$
\end{proof}
\begin{prop}
The corresponding cohomology modules of the cochain complex $(C^{*}_{HL}(\phi,\phi),\delta^{n})$
are denoted by
$$H^{n}_{HL}(\phi,\phi):=H^{n}_{HL}(C^{*}_{HL}(\phi,\phi),\delta).$$
If $H^{n}_{HL}(\mathcal{L},\mathcal{L})$, $H^{n}_{HL}(\mathcal{L}',\mathcal{L}')$
and $H^{n-1}_{HL}(\mathcal{L},\mathcal{L}')$ are all trivial then so is $H^{n}_{HL}(\phi,\phi)$.
\end{prop}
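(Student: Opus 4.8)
The plan is to recognise the complex $(C^{*}_{HL}(\phi,\phi),\delta)$ as the mapping cone of the cochain map sending $(\varphi_{1},\varphi_{2})$ to $\phi\circ\varphi_{1}-\varphi_{2}\diamond\phi$, and to show directly that every $n$-cocycle is an $n$-coboundary. First I would fix an arbitrary cocycle $(\varphi_{1},\varphi_{2},\varphi_{3})\in Z^{n}_{HL}(\phi,\phi)$ and read off the three components of the equation $\delta^{n}(\varphi_{1},\varphi_{2},\varphi_{3})=0$. These are $\delta^{n}_{HL}\varphi_{1}=0$, $\delta^{n}_{HL}\varphi_{2}=0$, and $\delta^{n-1}_{HL}\varphi_{3}+(-1)^{n-1}(\phi\circ\varphi_{1}-\varphi_{2}\diamond\phi)=0$; in particular $\varphi_{1}$ and $\varphi_{2}$ are cocycles of the two end complexes $C^{*}_{HL}(\mathcal{L},\mathcal{L})$ and $C^{*}_{HL}(\mathcal{L}',\mathcal{L}')$.

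Invoking the hypotheses $H^{n}_{HL}(\mathcal{L},\mathcal{L})=0$ and $H^{n}_{HL}(\mathcal{L}',\mathcal{L}')=0$, I would next choose $\psi_{1}\in C^{n-1}_{HL}(\mathcal{L},\mathcal{L})$ and $\psi_{2}\in C^{n-1}_{HL}(\mathcal{L}',\mathcal{L}')$ with $\varphi_{1}=\delta^{n-1}_{HL}\psi_{1}$ and $\varphi_{2}=\delta^{n-1}_{HL}\psi_{2}$. The aim is then to build $\psi_{3}\in\widetilde{C}^{n-2}_{\alpha,\alpha'}(\mathcal{L},\mathcal{L}')$ so that $\delta^{n-1}(\psi_{1},\psi_{2},\psi_{3})=(\varphi_{1},\varphi_{2},\varphi_{3})$. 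The first two components already agree, and comparing the third one reduces the problem to solving $\delta^{n-2}_{HL}\psi_{3}=W$, where $W:=\varphi_{3}-(-1)^{n-2}(\phi\circ\psi_{1}-\psi_{2}\diamond\phi)$. Because $H^{n-1}_{HL}(\mathcal{L},\mathcal{L}')=0$, such a $\psi_{3}$ exists as soon as $W$ lies in $Z^{n-1}_{HL}(\mathcal{L},\mathcal{L}')$, i.e. as soon as $\delta^{n-1}_{HL}W=0$.

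The crux, and the step I expect to be the main obstacle, is verifying that $W$ is a cocycle. Here I would use the intertwining relations $\delta_{HL}(\phi\circ\psi_{1})=\phi\circ\delta_{HL}\psi_{1}$ and $\delta_{HL}(\psi_{2}\diamond\phi)=(\delta_{HL}\psi_{2})\diamond\phi$, which hold precisely because $\phi$ is a Hom-Lie morphism (so that $[\phi(\cdot),\phi(\cdot)]'=\phi[\cdot,\cdot]$ and $\phi\circ\alpha=\alpha'\circ\phi$); these are the identities underlying the preceding Lemma. They give $\delta^{n-1}_{HL}(\phi\circ\psi_{1}-\psi_{2}\diamond\phi)=\phi\circ\varphi_{1}-\varphi_{2}\diamond\phi$, while the third cocycle equation gives $\delta^{n-1}_{HL}\varphi_{3}=(-1)^{n}(\phi\circ\varphi_{1}-\varphi_{2}\diamond\phi)$. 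Since $(-1)^{n-2}=(-1)^{n}$, the two contributions to $\delta^{n-1}_{HL}W$ cancel, so $\delta^{n-1}_{HL}W=0$ and the desired $\psi_{3}$ exists; this exhibits $(\varphi_{1},\varphi_{2},\varphi_{3})$ as $\delta^{n-1}(\psi_{1},\psi_{2},\psi_{3})$ and proves $H^{n}_{HL}(\phi,\phi)=0$. The whole argument can equivalently be packaged as the long exact cohomology sequence of the mapping cone, in which $H^{n}_{HL}(\phi,\phi)$ sits between $H^{n-1}_{HL}(\mathcal{L},\mathcal{L}')$ and $H^{n}_{HL}(\mathcal{L},\mathcal{L})\times H^{n}_{HL}(\mathcal{L}',\mathcal{L}')$, all of which vanish by hypothesis. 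The only real care needed is the sign bookkeeping in the computation of $\delta^{n-1}_{HL}W$.
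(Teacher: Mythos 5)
Your argument is correct and coincides with the proof the paper intends: the paper gives no inline argument but defers to Proposition 3.3 of \cite{d}, whose proof is exactly the lift-and-correct cocycle chase you describe (kill $\varphi_{1},\varphi_{2}$ by the end hypotheses, reduce to solving $\delta^{n-2}_{HL}\psi_{3}=W$, and use $H^{n-1}_{HL}(\mathcal{L},\mathcal{L}')=0$ after checking $W$ is a cocycle). Your sign bookkeeping is also right: the intertwining relations hold with plus signs, $\delta_{HL}(\phi\circ\psi_{1})=\phi\circ\delta_{HL}\psi_{1}$ and $\delta_{HL}(\psi_{2}\diamond\phi)=(\delta_{HL}\psi_{2})\diamond\phi$ (the minus signs in the paper's preceding lemma appear to be typos, since with them $\delta^{n+1}\circ\delta^{n}$ would not vanish), and these give $\delta^{n-1}_{HL}W=(-1)^{n}(\phi\circ\varphi_{1}-\varphi_{2}\diamond\phi)-(-1)^{n-2}(\phi\circ\varphi_{1}-\varphi_{2}\diamond\phi)=0$ as you computed.
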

The proof is similar to that of Proposition $3.3$ in \cite{d}.\\
We call $H^{n}_{HL}(\phi,\phi)$ the $n^{th}$ Chevalley-Eilenberg cohomology group of the Hom-Lie
algebra morphism $\phi$.\\

Now, we define a composition product that lead to a structure of a graded Lie algebra on  cochains.

Regard $\mathcal{L}'$ as an $\mathcal{L}$-bimodule. Let $C^{p}(\mathcal{L},\mathcal{L}')$
be the vector space of all skew-symmetric $p$-linear maps from $\mathcal{L}$ to $\mathcal{L}'$.
Set $C^{p}(\mathcal{L},\mathcal{L}')=0$ if $p<0$ and let $C=\bigoplus\limits_{p\in Z}C^{p}(\mathcal{L},\mathcal{L}')$.
\begin{defn}
We define a product on $C$, denoted by $[\cdot, \cdot]^{\smile}$ as follows. Let
$\varphi\in C^{p}(\mathcal{L},\mathcal{L}')$ and $\psi\in C^{q}(\mathcal{L},\mathcal{L}')$, then $[\varphi,\psi]^{\smile}\in C^{p+q}(\mathcal{L},\mathcal{L}')$
is given by
$$[\varphi,\psi]^{\smile}(x_{0},\cdots,x_{p+q-1})=\sum_{\sigma\in S_{p+q}}\varepsilon(\sigma)[\varphi(x_{\sigma(0)}
,\cdots,x_{\sigma(p-1)}),\psi(x_{\sigma(p)},\cdots,x_{\sigma(p+q-1)})]'.$$
\end{defn}
This product defines on $C$ a structure of a graded Lie algebra.
\begin{defn}
Define the homogenous derivation $D:C\rightarrow C$ of degree $1$.  For
$\varphi\in C^{n}(\mathcal{A},\mathcal{B})$,  we set
\begin{equation*}
  D\varphi(x_{0},\cdots,x_{n})=\sum\limits_{i<j}(-1)^{i+j}\varphi
  ([x_{i},x_{j}],\alpha(x_{0}),\cdots,\widehat{x}_{i},\cdots,\widehat{x}_{j},\cdots,\alpha(x_{n})).
\end{equation*}
\end{defn}
\begin{example}\ 

We consider the $4$-dimensional Hom-Lie algebras $(\mathfrak{g}_{1},[\cdot, \cdot]_{1},\alpha_{1})$ 
 and $(\mathfrak{g}_{2},[\cdot, \cdot]_{2},\alpha_{2})$
defined, with respect to the basis $(e_{i})_{1\leq i\leq4}$ and $(f_{i})_{1\leq i\leq4}$ respectively,  by
$$\small{\left\{\begin{array}{lll} & [e_{1},e_{2}]_{1}=be_{4}\\
 &  [e_{3},e_{4}]_{1}=de_{2}\end{array}\right.
 ;\  \left\{\begin{array}{lll}
\alpha_{1}(e_{1})&=&e_{3}+ae_{4}\\
\alpha_{1}(e_{2})&=&be_{4}\\
\alpha_{1}(e_{3})&=&e_{1}+ce_{2}\\
\alpha_{1}(e_{4})&=&de_{2}\end{array}\right.;~ [f_{1},f_{2}]_{2}=df_{4}\  ; \left\{\begin{array}{lll}
\alpha_{2}(f_{1})&=&af_{1}+bf_{2}+f_{3}+cf_{4}\\
\alpha_{2}(f_{2})&=&df_{4}\\
\alpha_{2}(f_{3})&=&ef_{1}+\frac{be}{a}f_{2}\\
\alpha_{2}(f_{4})&=&0\end{array}\right.}.
$$
We compute now the second cohomology space for the first Hom-Lie algebra. The 2-cocycle space 
$Z^{2}_{HL}(\mathfrak{g}_{1},\mathfrak{g}_{1})$ is $7$-dimensional and is generated by
$$\small{\left\{\begin{array}{ll}
\psi_{1}(e_{1},e_{2})=\frac{-1+bd}{c+ad}e_{2}\\
\psi_{1}(e_{1},e_{3})=e_{2}+\frac{-ad-bcd}{d(c+ad)}f_{4}\\
\psi_{1}(e_{2},e_{3})=\frac{-1+bd}{c+ad}e_{2}\\
\psi_{1}(e_{1},e_{4})=\frac{1-bd}{c+ad}e_{4}\\
\psi_{1}(e_{2},e_{4})=0\\
\psi_{1}(e_{3},e_{4})=\frac{-1+bd}{c+ad}e_{4}
\end{array}\right.\quad \left\{\begin{array}{ll}
\psi_{2}(e_{1},e_{2})=\frac{-a-bc}{c+ad}e_{2}\\
\psi_{2}(e_{1},e_{3})=\frac{bc^{2}-a^{2}d}{d(c+ad)}e_{4}\\
\psi_{2}(e_{2},e_{3})=\frac{-a-bc}{c+ad}e_{2}-\frac{b}{d}e_{4}\\
\psi_{2}(e_{1},e_{4})=e_{2}+\frac{a+bc}{c+ad}e_{4}\\
\psi_{2}(e_{2},e_{4})=0\\
\psi_{2}(e_{3},e_{4})=\frac{-a-bc}{c+ad}e_{4}
\end{array}\right.\quad\left\{\begin{array}{ll}
\psi_{3}(e_{1},e_{2})=e_{4}\\
\psi_{3}(e_{1},e_{3})=0\\
\psi_{3}(e_{2},e_{3})=0\\
\psi_{3}(e_{1},e_{4})=0\\
\psi_{3}(e_{2},e_{4})=0\\
\psi_{3}(e_{3},e_{4})=\frac{d}{b}e_{2}
\end{array}\right.}
$$
Furthermore $H_{HL}^{2}(\mathfrak{g}_{1},\mathfrak{g}_{1})=0$.\\
For the second Hom-Lie algebra, $Z^{2}_{HL}(\mathfrak{g}_{2},\mathfrak{g}_{2})$ is
$9$-dimensional and is generated by
$$\small{\left\{\begin{array}{ll}
\psi_{1}(f_{1},f_{2})=\frac{1-a}{e}f_{2}\\
\psi_{1}(f_{1},f_{3})=-\frac{b}{a}f_{2}+\frac{ac+db}{ae}e_{4}\\
\psi_{1}(f_{2},f_{3})=f_{2}\\
\psi_{1}(f_{1},f_{4})=0\\
\psi_{1}(f_{2},f_{4})=-\frac{a}{be}f_{4}\\
\psi_{1}(f_{3},f_{4})=\frac{1}{e}f_{4}
\end{array}\right.\quad \left\{\begin{array}{ll}
\psi_{2}(f_{1},f_{2})=0\\
\psi_{2}(f_{1},f_{3})=0\\
\psi_{2}(f_{2},f_{3})=0\\
\psi_{2}(f_{1},f_{4})=f_{4}\\
\psi_{2}(f_{2},f_{4})=-\frac{a}{b}f_{4}\\
\psi_{2}(f_{3},f_{4})=0
\end{array}\right.\quad\left\{\begin{array}{ll}
\psi_{3}(f_{1},f_{2})=f_{4}\\
\psi_{3}(f_{1},f_{3})=0\\
\psi_{3}(f_{2},f_{3})=0\\
\psi_{3}(f_{1},f_{4})=0\\
\psi_{3}(f_{2},f_{4})=0\\
\psi_{3}(f_{3},f_{4})=0
\end{array}\right.\left\{\begin{array}{ll}
\psi_{4}(f_{1},f_{2})=0\\
\psi_{4}(f_{1},f_{3})=-\frac{b}{a}f_{4}\\
\psi_{4}(f_{2},f_{3})=f_{4}\\
\psi_{4}(f_{1},f_{4})=0\\
\psi_{4}(f_{2},f_{4})=0\\
\psi_{4}(f_{3},f_{4})=0
\end{array}\right.}
$$
One distinguishes the case where $e=1-a, ~Z^{2}_{HL}(\mathfrak{g}_{1},\mathfrak{g}_{1})$
is $9$-dimensional and generated by
$$\small{\left\{\begin{array}{ll}
\varphi_{1}(f_{1},f_{2})=0\\
\varphi_{1}(f_{1},f_{3})=f_{2}+\frac{b}{a}f_{2}+\frac{1}{a-1}f_{3}\\
\varphi_{1}(f_{2},f_{3})=\frac{bd+ac}{b(1-a)}f_{4}\\
\varphi_{1}(f_{1},f_{4})=0\\
\varphi_{1}(f_{2},f_{4})=0\\
\varphi_{1}(f_{3},f_{4})=0
\end{array}\right.\quad \left\{\begin{array}{ll}
\varphi_{2}(f_{1},f_{2})=0\\
\varphi_{2}(f_{1},f_{3})=0\\
\varphi_{2}(f_{2},f_{3})=\\
\varphi_{2}(f_{1},f_{4})=f_{4}\\
\varphi_{2}(f_{2},f_{4})=-\frac{a}{b}f_{4}\\
\varphi_{2}(f_{3},f_{4})=0
\end{array}\right.\quad\left\{\begin{array}{ll}
\varphi_{3}(f_{1},f_{2})=f_{2}\\
\varphi_{3}(f_{1},f_{3})=-\frac{b}{a}f_{2}\\
\varphi_{3}(f_{2},f_{3})=f_{2}+\frac{-bd-ac}{b(1-a)}f_{4}\\
\varphi_{3}(f_{1},f_{4})=0\\
\varphi_{3}(f_{2},f_{4})=\frac{-2a^{2}+a}{b(a-1)}f_{4}\\
\varphi_{3}(f_{3},f_{4})=-\frac{1}{a-1}
\end{array}\right.}$$
$$ \left\{\begin{array}{ll}
\varphi_{4}(f_{1},f_{2})=f_{4}\\
\varphi_{4}(f_{1},f_{3})=0\\
\varphi_{4}(f_{2},f_{3})=\\
\varphi_{4}(f_{1},f_{4})=0\\
\varphi_{4}(f_{2},f_{4})=0\\
\varphi_{4}(f_{3},f_{4})=0
\end{array}\right.\quad\left\{\begin{array}{ll}
\varphi_{5}(f_{1},f_{2})=0\\
\varphi_{5}(f_{1},f_{3})=f_{4}\\
\varphi_{5}(f_{2},f_{3})=\frac{a-a^{2}}{b(1-a)}f_{4}\\
\varphi_{5}(f_{1},f_{4})=0\\
\varphi_{5}(f_{2},f_{4})=0\\
\varphi_{5}(f_{3},f_{4})=0
\end{array}\right.$$
Therefore
\begin{equation*}
dim~H^{2}_{HL}(\mathfrak{g}_{2},\mathfrak{g}_{2})
= \left\{
\begin{array}{llllll}
&7 \quad &\hbox{ if } e\neq1-a &\hbox{ and generated by } \psi_{1},\psi_{2}\\
&6 \quad &\hbox{ if } e=1-a  &\hbox{ and generated by } \varphi_{1},\varphi_{2},\varphi_{3}.\\
\end{array}\right.
\end{equation*}
One refers to $a_{1},b_{1},c_{1},d_{1}$ for the parameters of $\alpha_{1}$ and
$a_{2},b_{2},c_{2},d_{2}$ for that of $\alpha_{2}$.\\
Now, let us construct the Hom-Lie algebra morphisms. Straightforward calculations lead to the following examples:

\noindent If $e=1+a_{2}$
$$\left\{\begin{array}{llll}
&\phi(e_{1})=\lambda_{1,1}f_{1}+\frac{b_{2}\lambda_{11}}{a_{2}}f_{2}-\lambda_{11}f_{3}+
\frac{(-a_{2}c_{2}-b_{2}d_{2})\lambda_{11}}{a_{2}}f_{4}\\
&\phi(e_{2})=0\\
&\phi(e_{3})=-\lambda_{11}f_{1}-\frac{\lambda_{11}b_{2}}{a_{2}}f_{2}+\lambda_{11}f_{3}+
\frac{(a_{2}c_{2}+b_{2}d_{2})\lambda_{11}}{a_{2}}f_{4}\\
&\phi(e_{4})=0
\end{array}\right.$$
If $e=1-a_{2}$
$$\left\{\begin{array}{llll}
&\phi(e_{1})=\lambda_{1,1}f_{1}+\frac{b_{2}\lambda_{11}}{a_{2}}f_{2}+\lambda_{11}f_{3}+
\frac{(a_{2}c_{2}+b_{2}d_{2})\lambda_{11}}{a_{2}}f_{4}\\
&\phi(e_{2})=0\\
&\phi(e_{3})=\lambda_{11}f_{1}+\frac{\lambda_{11}b_{2}}{a_{2}}f_{2}+\lambda_{11}f_{3}+
\frac{(a_{2}c_{2}+b_{2}d_{2})\lambda_{11}}{a_{2}}f_{4}\\
&\phi(e_{4})=0
\end{array}\right.$$
Under the condition $a_{2}=b_{2}$, the first space of cocycles $Z_{HL}^{1}(\mathfrak{g}_{1},\mathfrak{g}_{2})$
related to the second morphisms family  is generated by
$$\left\{\begin{array}{llll}
&\phi_{1}(e_{1})=\kappa f_{1}+\kappa f_{2} + \kappa f_{3}+ (c_{2}+d_{2})\kappa f_{4}\\
&\phi_{1}(e_{2})=0\\
&\phi_{1}(e_{3})=\kappa f_{1}+\kappa f_{2} + \kappa f_{3}+ (c_{2}+d_{2})\kappa f_{4}\\
&\phi_{1}(e_{4})=0
\end{array}\right.$$ where $\kappa$ is a parameter.

Then $H_{HL}^{1}(\mathfrak{g}_{1},\mathfrak{g}_{2})$ is $3$-dimensional.
\end{example}
  \section{Deformations of Hom-associative algebra morphisms}

In this section, we study one-parameter formal deformations of Hom-algebra morphisms using the approach introduced by Gerstenhaber. Recall that the main idea  is to change the scalar field $\mathbb{K}$ to a formal power series ring  $\mathbb{K}[\![t]\!]$, in one variable $t$,  and the main results provide cohomological interpretations.

Let  $A[\![t]\!]$ be the set of formal power
series whose coefficients are elements of the vector space $A$, $(A[\![t]\!]$ is
obtained by extending the coefficients domain of $A$ from $\mathbb{K}$ to $\mathbb{K}[\![t]\!])$.
\subsection{Deformation of Hom-associative algebra morphisms}
First, we recall the definition and  the main results for Hom-associative algebras, involving Hom-type Hochschild cohomology.
\begin{defn}
A one-parameter formal deformation of a Hom-associative algebra $(\mathcal{A},\mu_0,\alpha) $
is a  Hom-associative $\mathbb{K}[\![t]\!]$-algebra $(\mathcal{A}[\![t]\!],\mu_{t},\alpha)$,  where $\mu_{t}=\sum_{i\geq 0}^{N}\mu_{i}t^{i}$, which is a $\mathbb{K}[\![t]\!]$-bilinear map satisfying
the  condition
$\mu_{t}\circ (\mu_{t}\otimes \alpha)=\mu_{t}\circ (\alpha \otimes \mu_{t}).$

\end{defn}
The deformation is said to be of order $N$ if  $\mu_{t}=\sum_{i\geq 0}^N\mu_{i}t^{i}$ and infinitesimal if  $N=1$.\\
We have the following properties:
\begin{enumerate}
  \item If the second Hom-type Hochschild cohomology group, $H^{2}_{Hom}(\mathcal{A},\mathcal{A})$
  vanishes, then the algebra $\mathcal{A}$ is rigid, in the sens that every deformation is equivalent to a trivial deformation.
  \item On the other hand, if $\mu_{t}=\sum_{i}\mu_{i}t^{i}$ is a deformation of $\mathcal{A}$
   and $\mathcal{A}$ is rigid, then the $2$-cocycle $\mu_{1}$ is always a $2$-coboundary of the Hom-type
   Hochschild cohomology.
  \item The third cohomology group is the space of obstructions to  extensions of a deformation of  order
   $N$ to a deformation of order $N+1$.
\end{enumerate}

Now, we discuss deformations of Hom-associative algebra morphisms. Let $(\mathcal{A},\mu_{A},\alpha)$ and $(\mathcal{B},\mu_{B},\beta)$ be  two Hom-associative
algebras.
\begin{defn}
Let $\phi:\mathcal{A}\rightarrow \mathcal{B}$ be a Hom-associative algebra morphism.
A deformation of $\phi$ is given by  a triple $\Theta_{t}=(\mu_{A,t},\mu_{B,t},\phi_{t})$
where
\begin{itemize}
    \item $\mu_{A,t}=\sum\limits_{n\geq0}\mu_{A,n}t^{n}$ is a deformation of $\mathcal{A}$,
    \item $\mu_{B,t}=\sum\limits_{n\geq0}\mu_{B,n}t^{n}$ is a deformation of $\mathcal{B}$,
    \item $\phi_{t}:\mathcal{A}[\![t]\!]\rightarrow \mathcal{B}[\![t]\!]$ is
    a Hom-associative algebra morphism of the form $\phi_{t}=\sum\limits_{n\geq0}\phi_{n}t^{n}$,
    where each $\phi_{n} :\mathcal{A}\rightarrow \mathcal{B}$ is a $\mathbb{K}$-linear map and
    $\phi_{0}=\phi$.
\end{itemize}
\end{defn}
\begin{prop}\label{2cocycle}
The linear coefficient $\theta_{1}=(\mu_{A,1}, \mu_{B,1},\phi_{1})$,
called the infinitesimal of the deformation $\Theta_{t}$ of $\phi$,
is a $2$-cocycle in $C_{Hom}^{2}(\phi,\phi)$.
\end{prop}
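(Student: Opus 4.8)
The plan is to verify that $\delta^{2}(\mu_{A,1},\mu_{B,1},\phi_{1})=0$ by checking each of its three components separately. By the definition of $\delta^{2}$ given above, this amounts to the three identities
$$\delta^{2}_{Hom}\mu_{A,1}=0,\qquad \delta^{2}_{Hom}\mu_{B,1}=0,\qquad \phi\circ\mu_{A,1}-\mu_{B,1}\circ\phi^{\otimes 2}-\delta^{1}_{Hom}\phi_{1}=0,$$
and the strategy is to obtain each of them by extracting the coefficient of $t^{1}$ from one of the three defining relations of the deformation $\Theta_{t}$.

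First I would treat the two outer components. Writing the Hom-associativity condition for $\mu_{A,t}$, namely $\mu_{A,t}\circ(\mu_{A,t}\otimes\alpha)=\mu_{A,t}\circ(\alpha\otimes\mu_{A,t})$, and expanding both sides as power series in $t$, the coefficient of $t^{1}$ reads
$$\mu_{A}(\mu_{A,1}(x,y),\alpha(z))+\mu_{A,1}(\mu_{A}(x,y),\alpha(z))=\mu_{A}(\alpha(x),\mu_{A,1}(y,z))+\mu_{A,1}(\alpha(x),\mu_{A}(y,z)).$$
Rearranging this equality and comparing it with the $n=2$ instance of \eqref{hom} shows precisely that $\delta^{2}_{Hom}\mu_{A,1}=0$; the identical computation applied to $\mu_{B,t}$ gives $\delta^{2}_{Hom}\mu_{B,1}=0$. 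This is the classical fact, already recorded among the properties above, that the infinitesimal of a deformation of a single Hom-associative algebra is a $2$-cocycle.

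The third component is the essential new point, and I would derive it from the morphism compatibility $\mu_{B,t}\circ(\phi_{t}\otimes\phi_{t})=\phi_{t}\circ\mu_{A,t}$. Expanding each factor as a power series and collecting the coefficient of $t^{1}$ yields
$$\mu_{B,1}(\phi(x),\phi(y))+\mu_{B}(\phi_{1}(x),\phi(y))+\mu_{B}(\phi(x),\phi_{1}(y))=\phi_{1}(\mu_{A}(x,y))+\phi(\mu_{A,1}(x,y)).$$
After transposing terms, the combination $\phi(\mu_{A,1}(x,y))-\mu_{B,1}(\phi(x),\phi(y))$ is isolated on one side, while the remaining terms are exactly the $n=1$ evaluation of \eqref{hom-morphism} on $\phi_{1}$, that is $\delta^{1}_{Hom}\phi_{1}(x,y)=\mu_{B}(\phi(x),\phi_{1}(y))+\mu_{B}(\phi_{1}(x),\phi(y))-\phi_{1}(\mu_{A}(x,y))$. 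Hence $\phi\circ\mu_{A,1}-\mu_{B,1}\circ\phi^{\otimes 2}=\delta^{1}_{Hom}\phi_{1}$, which is the vanishing of the third component.

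The step I expect to require the most care is this final matching: one must write out the $n=1$ case of the coboundary \eqref{hom-morphism} explicitly, noting $\alpha^{0}=\mathrm{id}$, and track the signs so that the order-$t$ morphism relation lines up term by term with $\delta^{1}_{Hom}\phi_{1}$. A secondary point worth recording is that $\mu_{A,1},\mu_{B,1},\phi_{1}$ genuinely lie in the relevant cochain spaces: since the twisting maps are not deformed, expanding the multiplicativity relations $\alpha\circ\mu_{A,t}=\mu_{A,t}\circ\alpha^{\otimes 2}$ and $\beta\circ\mu_{B,t}=\mu_{B,t}\circ\beta^{\otimes 2}$, together with the twist-compatibility $\phi_{t}\circ\alpha=\beta\circ\phi_{t}$ built into the morphism $\phi_{t}$, to first order in $t$ shows that $\mu_{A,1}\in C^{2}_{Hom}(\mathcal{A},\mathcal{A})$, $\mu_{B,1}\in C^{2}_{Hom}(\mathcal{B},\mathcal{B})$ and $\phi_{1}\in C^{1}_{\alpha,\alpha'}(\mathcal{A},\mathcal{B})$, so that $\theta_{1}$ indeed belongs to $C^{2}_{Hom}(\phi,\phi)$.
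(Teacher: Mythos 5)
Your proof is correct and takes essentially the same route as the paper: both extract the coefficient of $t^{1}$ from the Hom-associativity equations for $\mu_{A,t}$, $\mu_{B,t}$ and from the morphism equation $\phi_{t}\circ\mu_{A,t}=\mu_{B,t}\circ(\phi_{t}\otimes\phi_{t})$, identifying the resulting three identities with the components of $\delta^{2}\theta_{1}$ (the paper merely packages the first two via its $\alpha$-associator $\circ_{\alpha}$ of Definition \ref{associator}, writing the $s=1$ equation as $\mu_{*,0}\circ_{\alpha}\mu_{*,1}+\mu_{*,1}\circ_{\alpha}\mu_{*,0}=0$). Your closing check that $\mu_{A,1}$, $\mu_{B,1}$ and $\phi_{1}$ genuinely lie in the twist-compatible cochain spaces is left implicit in the paper and is a worthwhile addition.
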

\begin{proof}
The proof is similar to the that one for  dialgebras in \cite{d}
and for  associative algebras in \cite{m}.\\

 Let $\mu_{A,t}$ (resp.~$\mu_{B,t}$) be  a formal deformation of
the Hom-associative algebra $\mathcal{A}$~(resp. $\mathcal{B}$)
which is given by a $\mathbb{K}[\![t]\!]$-bilinear map
$$\mu_{A,t}:\mathcal{A}[\![t]\!]\times\mathcal{A}
[\![t]\!]\rightarrow\mathcal{A}[\![t]\!]~~(\text{resp.}~\mu_{B,t}:\mathcal{B}[\![t]\!]\times\mathcal{B}
[\![t]\!]\rightarrow\mathcal{B}[\![t]\!])$$
of the form $\mu_{*,t}=\sum_{n\geq0}\mu_{*,n}t^n$, where each
$\mu_{*,t}$ extended to be $\mathbb{K}[\![t]\!]$-bilinear map, such that  for
$x,y,z\in\mathcal{A}~(\text{resp.}~\mathcal{B})$ the following condition holds
\begin{equation}\label{deformation}
\mu_{*,t}(\mu_{*,t}(x,y),\alpha(z))=\mu_{*,t}(\alpha(x),\mu_{*,t}(y,z)),
\end{equation}
where $\ast\in\{1,2\}$ and refers to either the multiplication in $\mathcal{A}$ or $\mathcal{B}$.
The identity \eqref{deformation} is called deformation equation of the Hom-associative algebra
and may be written as
$$\sum\limits_{i\geq0,j\geq0}t^{i+j}(\mu_{*,i}(\mu_{*,j}(x,y),\alpha(z))-\mu_{*,i}
(\alpha(x),\mu_{*,j}(y,z)))=0,$$
which is equivalent to the following infinite system of equations
$$\sum\limits_{i=0}^{s}(\mu_{*,i}(\mu_{*,s-i}(x,y),\alpha(z))-\mu_{*,i}(\alpha(x),\mu_{*,s-i}(y,z)))=0,~~for
~s=0,1,2,\cdots$$
\begin{defn}\label{associator}
We call $\alpha$-associator the map
$$Hom(\mathcal{A}^{\times2},\mathcal{A})\times Hom(\mathcal{A}^{\times2},\mathcal{A})\rightarrow
Hom(\mathcal{A}^{\times3},\mathcal{A})
,\quad
(\mu_{i},\mu_{j})\mapsto\mu_{i}\circ_{\alpha}\mu_{j},$$
defined for all $x,y,z\in\mathcal{A}$ by
$\mu_{i}\circ_{\alpha}\mu_{j}(x,y,z)=\mu_{i}(\alpha(x),\mu_{j}(y,z))-\mu_{i}
(\mu_{j}(x,y),\alpha(z)).$
\end{defn}

By using $\alpha_{k}$-associator, the deformation equation may be written as 
\begin{equation}\label{rong}
 \sum_{i=0}^{s}\mu_{*,i}\circ_{\alpha}\mu_{*,s-i}=0,~~for~s=0,1,2,\cdots
\end{equation}
For $s=1$, we have $\mu_{*,0}\circ_{\alpha}\mu_{*,1}+\mu_{*,1}\circ_{\alpha}\mu_{*,0}=0$ which is equivalent
to $\delta^{2}_{Hom}\mu_{*,1}=0$. It turns out that $\mu_{*,1}$ is always a $2$-cocycle.\\
For $s\geq2$, the identity \eqref{rong} is equivalent to :
$
\delta^{2}_{Hom}\mu_{*,s}=-\sum_{p+q=s,p>0,q>0}\mu_{*,p}\circ_{\alpha}\mu_{*,q}.
$\\
Let $\phi$ be a Hom-associative algebra  morphism, we have the following deformation equation
$$\phi_{t}(\mu_{A,t}(x,y))=\mu_{B,t}(\phi_{t}(x),\phi_{t}(y)),$$
which may be written
$\sum\limits_{i,j\geq0}t^{i+j}\phi_{i}(\mu_{A,j}(x,y))=\sum\limits_{i,j,k}
t^{i+j+k}\mu_{B,k}(\phi_{i}(x),\phi_{j}(y))$.
It  is equivalent to the following infinite system of equations
$$\sum_{i=0}^{s}\phi_{i}(\mu_{A,s-i}(x,y))=\sum_{i,j\geq 0; \; i+j\leq s}\mu_{B,i}(\phi_{j}(x),\phi_{s-i-j}(y)),~~for
~s=0,1,2,\cdots$$
For $s=1$, we have\\
\begin{small}
 $\phi_{0}\circ\mu_{A,1}(x,y)-\mu_{B,1}(\phi_{0}(x),\phi_{0}(y))+\phi_{1}\circ\mu_{A,0}(x,y)-\mu_{B,0}(\phi_{1}(x),
 \phi_{0}(y))-\mu_{B,0}(\phi_{0}(x),\phi_{1}(y))
=0.$
\end{small}\\ It is equivalent to 
$$
\phi\circ\mu_{A,1}-\mu_{B,1}\circ(\phi\times\phi)-\delta^{1}\phi_{1}=0.$$
This means that the $2$-cochain
$\phi\circ\mu_{A,1}-\mu_{B,1}\circ(\phi\otimes\phi)-\delta^{1}\phi_{1}\in C_{Hom}^{2}(\mathcal{A},\mathcal{B})$
is equal to $0$.\\
More generally, if $\theta_{i}=0$ for $1\leq i\leq n$, then $\theta_{n+1}$ is a $2$-cocycle.
Indeed, we have the following identity
$\sum\limits_{p+q=s}\mu_{*,p}\circ_{\alpha}\mu_{*,q}=0.$\\
For $s=n+1$, we have
$$\mu_{*,n+1}\circ_{\alpha}\mu_{*,0}+\mu_{*,0}\circ_{\alpha}\mu_{*,n+1}+
\sum\limits_{p+q=n+1,p>0,q>0}\mu_{*,p}\circ_{\alpha}\mu_{*,q}=0.$$
It is equivalent to 
$$ \delta^{2}\mu_{*,n+1}=-\sum_{p+q=n+1,p>0,q>0}\mu_{*,p}\circ_{\alpha}\mu_{*,q}.$$
As well, from the deformation equation of $\phi$, we have
$\phi\circ\mu_{A,n+1}-\mu_{B,n+1}\circ(\phi\otimes\phi)-\delta^{1}\phi_{n+1}=0$.
This is equivalent to say that the $2$-cochain
$\phi\circ\mu_{A,n+1}-\mu_{B,n+1}\circ(\phi\otimes\phi)-\delta^{1}\phi_{n+1}\in
C_{Hom }^{2}(\mathcal{A},\mathcal{B})$ is equal to $0$.
\end{proof}
\subsubsection{Equivalent deformations}
Let $\mu_{A,t}$ and $\widetilde{\mu}_{A,t}$ be two formal deformations of a Hom-associative algebra $\mathcal{A}$.
A \emph{formal automorphism} $\psi_{t}:\mathcal{A}[\![t]\!]\rightarrow\mathcal{A}[\![t]\!]$ is a
power series $\psi_{t}=\sum\limits_{n\geq0}\psi_{n}t^{n}$ in which each
$\psi_{n}\in End(\mathcal{A})$
and $\psi_{0}=Id_{\mathcal{A}}$ such that
$\psi_{t}(\mu_{t,A}(x,y))=\widetilde{\mu}_{t,A}(\psi_{t}(x),\psi_{t}(y))$
and
$\psi_{t}\circ\alpha=\widetilde{\alpha}\circ\psi_{t}$
  for all $x,y\in\mathcal{A}$.  Two deformations $\mu_{\mathcal{A},t}$ and
$\widetilde{\mu}_{\mathcal{A}t}$ are said \emph{equivalent} if and only if there exists
a formal automorphism which transforms  $\mu_{A,t}$ to $\widetilde{\mu}_{A,t}$, 
 $\mu_{A,t}\rightarrow \widetilde{\mu}_{A,t}$.

\begin{defn}\label{DefMorphEquiv}
Let $\Theta_{t}=(\mu_{A,t},\mu_{B,t},\phi_{t})$ and $\widetilde{\Theta}=
(\widetilde{\mu}_{A,t},\widetilde{\mu}_{B,t},\widetilde{\phi}_{t})$ be two deformations of
a Hom-associative algebra morphism $\phi:\mathcal{A}\rightarrow\mathcal{B}$.\\
A formal automorphism $\phi_{t}:\Theta_{t}\rightarrow\widetilde{\Theta}_{t}$ is
a pair $(\psi_{A,t},\psi_{B,t})$, where $\psi_{A,t}:\mathcal{A}[\![t]\!]\rightarrow A[\![t]\!]$
and $\psi_{B,t}:\mathcal{B}[\![t]\!]\rightarrow \mathcal{B}[\![t]\!]$ are formal automorphisms such that
$\widetilde{\phi}_{t}=\psi_{B,t}~\phi_{t}~\psi^{-1}_{A,t}$.\\
 Two deformations $\Theta_{t}$ and $\widetilde{\Theta}_{t}$ are equivalent
if and only if there exists a formal automorphism such that $\theta_{t}\rightarrow\widetilde{\theta}_{t}$.
\end{defn}
Given a deformation $\Theta_{t}$ and a pair of power series
$\psi_{t}=(\psi_{A,t}=\sum\limits_{n}\psi_{A,n}t^{n},\psi_{B,t}=\sum\limits_{n}\psi_{B,n}t^{n})$,
one can define a deformation $\widetilde{\Theta}_{t}$ which
is automatically equivalent to $\Theta_{t}$.

\begin{prop}\label{deformation equivalente}\cite{aem}
If $\mu_{A,t}$ and $\widetilde{\mu}_{A,t}$ are equivalent deformations of
$\mathcal{A}$ given by the automorphism $\phi_{A,t}:\mathcal{A}[\![t]\!]\rightarrow \mathcal{A}[\![t]\!]$, then 
the infinitesimals of $\mu_{A,t}$ and $\widetilde{\mu}_{A,t}$ belong to the same cohomology class
with respect to the cohomology of the algebra $\mathcal{A}$.
\end{prop}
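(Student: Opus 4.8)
The plan is to compute the infinitesimals of the two equivalent deformations and show that their difference is a coboundary. Suppose $\widetilde{\mu}_{A,t} = \psi_{A,t}^{-1}\circ \mu_{A,t}\circ(\psi_{A,t}\otimes\psi_{A,t})$ (transported back through the automorphism), where $\psi_{A,t}=\sum_{n\geq 0}\psi_{A,n}t^n$ with $\psi_{A,0}=\mathrm{Id}_{\mathcal{A}}$. First I would write the defining equivalence relation $\psi_{A,t}(\mu_{A,t}(x,y))=\widetilde{\mu}_{A,t}(\psi_{A,t}(x),\psi_{A,t}(y))$ and expand both sides as formal power series in $t$.

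The heart of the argument is to extract the coefficient of $t^1$. On the left the $t^1$-term is
$$
\psi_{A,1}(\mu_{A,0}(x,y)) + \mu_{A,1}(x,y),
$$
while on the right the $t^1$-term is
$$
\widetilde{\mu}_{A,1}(x,y) + \mu_{A,0}(\psi_{A,1}(x),y) + \mu_{A,0}(x,\psi_{A,1}(y)).
$$
Equating these and writing $\mu_{A,0}=\mu_{\mathcal A}$, I would rearrange to obtain
$$
\mu_{A,1}(x,y)-\widetilde{\mu}_{A,1}(x,y)
= \mu_{\mathcal A}(\psi_{A,1}(x),y)+\mu_{\mathcal A}(x,\psi_{A,1}(y))-\psi_{A,1}(\mu_{\mathcal A}(x,y)).
$$

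The key observation is that the right-hand side is exactly $\delta^{1}_{Hom}\psi_{A,1}$ evaluated at $(x,y)$, where $\psi_{A,1}\in C^{1}_{Hom}(\mathcal{A},\mathcal{A})$ is regarded as a $1$-cochain via the coboundary formula \eqref{hom} (in which the twisting maps $\alpha^{n-1}$ reduce to the identity when $n=1$). Hence $\mu_{A,1}-\widetilde{\mu}_{A,1}=\delta^{1}_{Hom}\psi_{A,1}$ is a $2$-coboundary, so the two infinitesimals differ by an element of $B^{2}_{Hom}(\mathcal{A},\mathcal{A})$ and therefore represent the same class in $H^{2}_{Hom}(\mathcal{A},\mathcal{A})$.

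The one point requiring care is that $\psi_{A,1}$ must actually lie in $C^{1}_{Hom}(\mathcal{A},\mathcal{A})$, i.e.\ it must be compatible with the structure map $\alpha$. This follows from the compatibility condition $\psi_{A,t}\circ\alpha=\alpha\circ\psi_{A,t}$ built into the definition of a formal automorphism: extracting its $t^1$-coefficient gives $\psi_{A,1}\circ\alpha=\alpha\circ\psi_{A,1}$, which is precisely the cochain condition needed for $\delta^{1}_{Hom}\psi_{A,1}$ to be well defined and for $\psi_{A,1}$ to belong to the complex. I expect this verification, together with checking that both infinitesimals are genuine $2$-cocycles (which is Proposition~\ref{2cocycle} applied to each algebra separately), to be the only subtle steps; the power-series bookkeeping is otherwise routine.
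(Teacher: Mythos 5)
Your proof is correct and is precisely the argument the paper relies on: the paper gives no proof here (it cites \cite{aem}), but the identity you derive, $\mu_{A,1}-\widetilde{\mu}_{A,1}=\delta^{1}_{Hom}\psi_{A,1}$, is exactly equation \eqref{mu} invoked later in the proof of Theorem \ref{theoreme}, and your verification that $\psi_{A,1}\circ\alpha=\alpha\circ\psi_{A,1}$ (so that $\psi_{A,1}\in C^{1}_{Hom}(\mathcal{A},\mathcal{A})$ and the coboundary is a legitimate Hom-cochain) is the right subtle point to isolate. One cosmetic slip: your opening formula $\widetilde{\mu}_{A,t}=\psi_{A,t}^{-1}\circ\mu_{A,t}\circ(\psi_{A,t}\otimes\psi_{A,t})$ is inverted relative to the equivalence relation $\psi_{A,t}(\mu_{A,t}(x,y))=\widetilde{\mu}_{A,t}(\psi_{A,t}(x),\psi_{A,t}(y))$ that you actually expand (it should read $\psi_{A,t}\circ\mu_{A,t}\circ(\psi_{A,t}^{-1}\otimes\psi_{A,t}^{-1})$), but this at most swaps the sign of the coboundary and does not affect the conclusion.
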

\begin{theorem}\label{theoreme}
The infinitesimal of a deformation $\Theta_{t}$ of $\phi$ is a $2$-cocycle in
$C_{Hom}^{2}(\phi,\phi)$ whose cohomology class is determined by the equivalence class
of the first term of $\Theta_{t}$.
\end{theorem}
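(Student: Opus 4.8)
The statement combines two assertions that have essentially already been set up in the preceding Proposition \ref{2cocycle} and Proposition \ref{deformation equivalente}, so my strategy is to bolt these two pieces together rather than redo any long calculation.  First I would invoke Proposition \ref{2cocycle} directly: it states that the infinitesimal $\theta_{1}=(\mu_{A,1},\mu_{B,1},\phi_{1})$ of a deformation $\Theta_{t}$ is a $2$-cocycle in $C_{Hom}^{2}(\phi,\phi)$, so the cocycle half of the theorem requires no new work.  What remains is the assertion about well-definedness of the \emph{cohomology class} under passage to an equivalent deformation; this is the genuinely new content of Theorem \ref{theoreme}.

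\textbf{Reducing to the component level.}  The second step is to unwind what ``the equivalence class of the first term of $\Theta_{t}$'' means in the product structure $C_{Hom}^{n}(\phi,\phi)=C_{Hom}^{n}(\mathcal{A},\mathcal{A})\times C_{Hom}^{n}(\mathcal{B},\mathcal{B})\times C_{\alpha,\alpha'}^{n-1}(\mathcal{A},\mathcal{B})$.  Given two equivalent deformations $\Theta_{t}$ and $\widetilde{\Theta}_{t}$ in the sense of Definition \ref{DefMorphEquiv}, they are tied together by a pair of formal automorphisms $(\psi_{A,t},\psi_{B,t})$ with $\psi_{A,0}=\mathrm{Id}_{\mathcal{A}}$, $\psi_{B,0}=\mathrm{Id}_{\mathcal{B}}$, satisfying $\widetilde{\phi}_{t}=\psi_{B,t}\,\phi_{t}\,\psi_{A,t}^{-1}$.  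I would apply Proposition \ref{deformation equivalente} separately to the $\mathcal{A}$-component (via $\psi_{A,t}$) and to the $\mathcal{B}$-component (via $\psi_{B,t}$): each tells me that $\mu_{A,1}$ and $\widetilde{\mu}_{A,1}$ differ by a coboundary in $C_{Hom}^{*}(\mathcal{A},\mathcal{A})$, and likewise $\mu_{B,1}$ and $\widetilde{\mu}_{B,1}$ differ by a coboundary in $C_{Hom}^{*}(\mathcal{B},\mathcal{B})$.  Explicitly, writing $\psi_{A,t}=\mathrm{Id}+\psi_{A,1}t+\cdots$ and $\psi_{B,t}=\mathrm{Id}+\psi_{B,1}t+\cdots$, the linear terms give $\widetilde{\mu}_{A,1}-\mu_{A,1}=\delta_{Hom}^{1}\psi_{A,1}$ and $\widetilde{\mu}_{B,1}-\mu_{B,1}=\delta_{Hom}^{1}\psi_{B,1}$.

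\textbf{The main obstacle: the morphism component.}  The hard part is the third slot, $\phi_{1}$, because this is where the two individually chosen automorphisms must interact through the relation $\widetilde{\phi}_{t}=\psi_{B,t}\,\phi_{t}\,\psi_{A,t}^{-1}$.  I would expand this equation to first order in $t$.  Using $\psi_{A,t}^{-1}=\mathrm{Id}-\psi_{A,1}t+\cdots$, the coefficient of $t$ yields $\widetilde{\phi}_{1}=\phi_{1}+\psi_{B,1}\circ\phi-\phi\circ\psi_{A,1}$.  The goal is then to recognize $\psi_{B,1}\circ\phi-\phi\circ\psi_{A,1}$ as exactly the third component of $\delta^{1}(\psi_{A,1},\psi_{B,1},0)$ under the coboundary $\delta^{n}(\varphi_{1},\varphi_{2},\varphi_{3})=(\delta_{Hom}^{n}\varphi_{1},\delta_{Hom}^{n}\varphi_{2},\phi\circ\varphi_{1}-\varphi_{2}\circ\phi^{\otimes n}-\delta_{Hom}^{n-1}\varphi_{3})$; matching the signs and the role of $\phi\circ\varphi_{1}-\varphi_{2}\circ\phi^{\otimes n}$ against $\psi_{B,1}\circ\phi-\phi\circ\psi_{A,1}$ is the delicate bookkeeping step that I expect to be the crux.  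Once this identification is made, the triple $\bigl(\delta_{Hom}^{1}\psi_{A,1},\,\delta_{Hom}^{1}\psi_{B,1},\,\psi_{B,1}\circ\phi-\phi\circ\psi_{A,1}\bigr)$ is precisely $\delta^{1}(\psi_{A,1},\psi_{B,1},0)\in B_{Hom}^{2}(\phi,\phi)$, so $\widetilde{\theta}_{1}-\theta_{1}$ is a coboundary and the two infinitesimals define the same class in $H_{Hom}^{2}(\phi,\phi)$.  Combining this with the cocycle statement from Proposition \ref{2cocycle} completes the proof.
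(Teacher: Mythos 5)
Your proposal is correct and takes essentially the same route as the paper's own proof: the cocycle half is delegated to Proposition \ref{2cocycle}, the two algebra components are handled by Proposition \ref{deformation equivalente}, and the morphism component comes from expanding $\widetilde{\phi}_{t}=\psi_{B,t}\,\phi_{t}\,\psi_{A,t}^{-1}$ to order $t$ and recognizing the difference of infinitesimals as the coboundary of the $1$-cochain $(\psi_{A,1},\psi_{B,1},0)$, exactly as in the paper. The only blemish is a pair of mutually cancelling sign slips: with the paper's conventions one has $\delta^{1}\psi_{*,1}=\mu_{*,1}-\widetilde{\mu}_{*,1}$ (not $\widetilde{\mu}_{*,1}-\mu_{*,1}$) and the third slot of $\delta^{1}(\psi_{A,1},\psi_{B,1},0)$ is $\phi\circ\psi_{A,1}-\psi_{B,1}\circ\phi$, so the correct identity is $\theta_{1}-\widetilde{\theta}_{1}=\delta^{1}(\psi_{A,1},\psi_{B,1},0)$ (equivalently $\widetilde{\theta}_{1}-\theta_{1}=\delta^{1}(-\psi_{A,1},-\psi_{B,1},0)$), which leaves your conclusion about the cohomology class unchanged.
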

\begin{proof}
In view of Proposition $\ref{2cocycle}$, it remains to show that if
$\psi_{t}:\Theta_{t}\rightarrow\Theta_{t}$ is a formal automorphism,
then the $2$-cocycle $\theta_{1}$ and $\widetilde{\theta}_{1}$
differ by a $2$-coboundary. Write $\psi_{t}=(\psi_{A,t},\psi_{B,t})$
and $\widetilde{\Theta}_{t}=(\widetilde{\mu}_{A,t},\widetilde{\mu}_{B,t},
\widetilde{\phi}_{t})$. 
In view of Proposition \ref{deformation equivalente}, we have
 \begin{eqnarray}\label{mu}
   \delta^{1}\psi_{*,1}=\mu_{*,1}-\widetilde{\mu}_{*,1}\in C_{Hom}^{2}
(\mathcal{*},\mathcal{*})
 \end{eqnarray}
for $\ast$ denoting either $A$ or $B$.
To finish the proof, we develop both sides of
$\widetilde{\phi}_{t}=\psi_{B,t}\phi_{t}\psi^{-1}_{A,t}$ and collecting the
coefficients of $t^{n}$  yield for $n=1$ the equality
\begin{equation}\label{equivalence}
  \phi_{1}-\widetilde{\phi}_{1}=\phi\psi_{A,1}-\psi_{B,1}\phi.
\end{equation}
It follows that a $1$-cochain $\alpha=(\psi_{A,1},\psi_{B,1},0)\in C_{Hom}^{1}(\phi,\phi)$
satisfies $\delta^{1}\alpha=\theta_{1}-\widetilde{\theta}_{1}$.
\end{proof}
\begin{defn}
Let $(\mathcal{A},\mu,\alpha)$ be a Hom-associative algebra and
$\theta_{1}=(\mu_{A,1},\mu_{B,1},\phi_{1})$ be an element of $Z^{2}_{Hom}(\phi,\phi)$. The
$2$-cocycle $\theta_{1}$ is said to be integrable if there exists a family
$ (\mu_{A,t}=\sum_{n}\mu_{A,n},\mu_{B,t}=\sum_{n}\mu_{B,n},\phi_{t}=\sum_{n}\phi_{n})$
defining a formal deformation
$\Theta_{t}$ of $\phi$.
\end{defn}
According to \eqref{equivalence} and \eqref{mu} identities, the integrability of $\theta_{1}$
depends only on its cohomology class. Thus, we have the  following result.

\begin{theorem}
Let $(\mathcal{A},\mu_{A},\alpha)$ and $(\mathcal{B},\mu_{B},\beta)$ be two Hom-associative
algebras and  $\Theta_{t}=(\mu_{t,A},\mu_{t,B},\phi_{t})$ be a deformation of a
Hom-associative algebra morphism $\phi:\mathcal{A}\rightarrow \mathcal{B}$.
Then,  there exists an equivalent deformation $\widetilde{\Theta}_{t}=(\widetilde{\mu}_{A,t},
\widetilde{\mu}_{B,t},\widetilde{\phi})$ such that
$\widetilde{\theta}_{1}\in Z_{Hom}^{2}(\phi,\phi)$ and $\widetilde{\theta}_{1}
\not\in B_{Hom}^{2}(\phi,\phi)$. Hence, if $H_{Hom}^{2}(\phi,\phi)=0$ then every formal deformation is
equivalent to a trivial deformation.
\end{theorem}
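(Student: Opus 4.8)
The plan is to run Gerstenhaber's classical reduction-of-order argument inside the morphism complex $(C^*_{Hom}(\phi,\phi),\delta)$. The engine is the link between equivalence and cohomology already recorded in the proof of Theorem \ref{theoreme}: if $\psi_t=(\psi_{A,t},\psi_{B,t})$ is a formal automorphism with linear parts $\psi_{A,1},\psi_{B,1}$, then equations \eqref{mu} and \eqref{equivalence} show that the induced equivalent deformation has infinitesimal $\widetilde{\theta}_1$ with $\theta_1-\widetilde{\theta}_1=\delta^1\alpha$, where $\alpha=(\psi_{A,1},\psi_{B,1},0)\in C^1_{Hom}(\phi,\phi)$. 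Hence the cohomology class of the leading term is the only invariant of the equivalence class of that term, and one has complete freedom to alter the leading term by any prescribed coboundary.

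First I would isolate the reduction step. Suppose $\Theta_t$ is a deformation of $\phi$ whose lowest nontrivial coefficient occurs at order $N$; by the argument already used in the proof of Proposition \ref{2cocycle} (which shows that if $\theta_i=0$ for $1\le i\le n$ then $\theta_{n+1}$ is a $2$-cocycle) this leading term $\theta_N$ lies in $Z^2_{Hom}(\phi,\phi)$. If $\theta_N\notin B^2_{Hom}(\phi,\phi)$, then $\widetilde{\Theta}_t=\Theta_t$ already realizes the first assertion. If instead $\theta_N=\delta^1\alpha_N$ with $\alpha_N=(\psi_{A,N},\psi_{B,N},0)$, I would apply the formal automorphism $\psi_t$ whose only non-identity coefficients sit in order $N$, namely $\psi_{*,t}=Id+t^N\psi_{*,N}$. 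Expanding $\widetilde{\phi}_t=\psi_{B,t}\,\phi_t\,\psi_{A,t}^{-1}$ together with the transported products, and collecting the coefficient of $t^N$ exactly as in the proof of Theorem \ref{theoreme}, the order-$N$ term of the resulting equivalent deformation equals $\theta_N-\delta^1\alpha_N=0$. Thus the new deformation agrees with $(\mu_A,\mu_B,\phi)$ through order $N$, i.e.\ its minimal nontrivial order has been raised to at least $N+1$.

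Then I would iterate, starting at the infinitesimal $N=1$. At each stage either the leading term is a genuine non-coboundary cocycle, which proves the first assertion, or it is a coboundary and the reduction step raises the minimal order. If the process never terminates, I would assemble the limit $\Psi_t=\cdots\circ\psi_t^{(2)}\circ\psi_t^{(1)}$ of the successive automorphisms; since $\psi_t^{(N)}=Id+t^N\psi_{*,N}$ is tangent to the identity to order $N$, each coefficient of a fixed power $t^k$ is affected by only finitely many factors, so $\Psi_t$ is a well-defined formal automorphism over $\mathbb{K}[\![t]\!]$ carrying $\Theta_t$ to the trivial deformation. This yields the \emph{Hence} clause: when $H^2_{Hom}(\phi,\phi)=0$ every $2$-cocycle is a coboundary, the non-coboundary alternative can never occur, the reduction proceeds at every order, and $\Theta_t$ is therefore equivalent to the trivial deformation.

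The main obstacle is precisely this limiting step: one must verify that the elementary order-raising equivalences assemble into a single automorphism defined over $\mathbb{K}[\![t]\!]$, which is where the $t$-adic (order) filtration is indispensable, and that each intermediate transported triple is still an honest deformation—that is, it continues to satisfy the Hom-associativity deformation equation \eqref{deformation} for $\mu_{A,t}$ and $\mu_{B,t}$ and the morphism deformation equation for $\phi_t$ at every order. Checking that $\Psi_t$ genuinely intertwines $\Theta_t$ with $(\mu_A,\mu_B,\phi)$, and the compatibility of each $\psi_{*,t}$ with $\alpha$ and $\beta$, is the routine but essential bookkeeping that finishes the proof.
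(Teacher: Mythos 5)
Your proposal is correct and follows essentially the same route as the paper: the paper's (very terse) proof likewise passes to an equivalent deformation via a pair of formal automorphisms $(\psi_{A,t},\psi_{B,t})$ and rests on the identity $\theta_{1}-\widetilde{\theta}_{1}=\delta^{1}(\psi_{A,1},\psi_{B,1},0)$ from the proof of Theorem \ref{theoreme}, with the order-raising step delegated to the theorem that follows it (whose proof is cited to \cite{d}). You merely make explicit what the paper leaves implicit, namely the induction on the first nonvanishing order and the $t$-adic convergence of the composed automorphisms.
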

\begin{proof}
Define a pair of power series $\psi_{t}=(\psi_{A,t},\psi_{B,t})$.  According to Definition \ref{DefMorphEquiv}, we  define an equivalent
deformation $\widetilde{\Theta}_{t}=(\widetilde{\mu}_{A,t},\widetilde{\mu}_
{B,t},\widetilde{\phi}_{t})
=\sum\limits_{n}\widetilde{\theta}_{n}t^{n}$.\\
We have $\mu_{*,1}\in Z_{Hom}^{2}(*,*)$ and also $\mu_{*,1}-\widetilde{\mu}_{*,1}
\in Z_{Hom}^{2}(*,*)$ for $*\in\{\mathcal{A},\mathcal{B}\}$. Moreover  $\phi_{1}\in Z_{Hom}^{1}(\mathcal{A},\mathcal{B})$
leads to   $\phi_{1}-\widetilde{\phi}_{1}\in Z_{Hom}^{1}(\mathcal{A},\mathcal{B})$. If $\widetilde{\theta}_{1}
\in B_{Hom}^{2}(\phi,\phi)$ then
$\theta_{1}-\widetilde{\theta}_{1}=\delta^{1}\varphi$ for $\varphi\in C_{Hom}^{1}(\phi,\phi)$.
\end{proof}
\begin{theorem}
Let $\Theta_{t}=\sum_{i\geq0}\theta_{i}t^{i}$ be a deformation of a Hom-associative algebra morphism,
in which $\theta_{i}=0$ for $i=1,\cdots,n$ and $\theta_{n+1}$ is a coboundary in $C_{Hom}^{2}(\phi,\phi)$,
then there exists a deformation $\widetilde{\Theta}_{t}$ equivalent to $\Theta_{t}$ and
a formal automorphism $\psi_{t}:\Theta_{t}\rightarrow\widetilde{\Theta}_{t}$ such that
$\widetilde{\theta}_{i}=0$, for $i=1,\cdots,n+1$.
\end{theorem}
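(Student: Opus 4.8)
The plan is to run the classical Gerstenhaber ``kill-a-coboundary'' argument, adapted to the morphism complex, as an inductive gauge step. By hypothesis $\theta_i=(\mu_{A,i},\mu_{B,i},\phi_i)=0$ for $1\le i\le n$, so $\Theta_t$ has the shape $\mu_{*,t}=\mu_{*,0}+\mu_{*,n+1}t^{n+1}+\cdots$ and $\phi_t=\phi+\phi_{n+1}t^{n+1}+\cdots$. Since $\theta_{n+1}\in B^2_{Hom}(\phi,\phi)$, I would choose $\xi=(\eta_A,\eta_B,c)\in C^1_{Hom}(\phi,\phi)$ with $\delta^1\xi=\theta_{n+1}$; unwinding the coboundary formula this reads $\mu_{A,n+1}=\delta^1_{Hom}\eta_A$, $\mu_{B,n+1}=\delta^1_{Hom}\eta_B$ and $\phi_{n+1}=\phi\circ\eta_A-\eta_B\circ\phi-\delta^0_{Hom}c$. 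I would then build a formal automorphism $\psi_t=(\psi_{A,t},\psi_{B,t})$ whose corrections begin only at order $t^{n+1}$ and set $\widetilde{\Theta}_t$ to be the transported deformation $\widetilde{\mu}_{*,t}=\psi_{*,t}\circ\mu_{*,t}\circ(\psi_{*,t}^{-1}\otimes\psi_{*,t}^{-1})$ and $\widetilde{\phi}_t=\psi_{B,t}\circ\phi_t\circ\psi_{A,t}^{-1}$, which is equivalent to $\Theta_t$ by Definition \ref{DefMorphEquiv}.

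First I would observe that, because $\psi_{*,t}=\mathrm{Id}+O(t^{n+1})$, comparing coefficients of $t^i$ for $i\le n$ in the two transport formulas leaves the vanishing low-order terms untouched, so $\widetilde{\theta}_i=0$ for $1\le i\le n$. The only new content sits in the coefficient of $t^{n+1}$, and expanding the transport formulas there --- the same computation that yields \eqref{mu} and \eqref{equivalence} in Theorem \ref{theoreme}, now performed at level $n+1$ --- gives
\begin{align*}
\widetilde{\mu}_{*,n+1}&=\mu_{*,n+1}-\delta^1_{Hom}\psi_{*,n+1},\\
\widetilde{\phi}_{n+1}&=\phi_{n+1}-\bigl(\phi\circ\psi_{A,n+1}-\psi_{B,n+1}\circ\phi\bigr).
\end{align*}
Equivalently $\theta_{n+1}-\widetilde{\theta}_{n+1}=\delta^1(\psi_{A,n+1},\psi_{B,n+1},0)$, so the gauge transport can only subtract coboundaries whose third slot carries no $\delta^0_{Hom}$ term. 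The resolution is that $\delta^0_{Hom}c=(\delta^0_{B}c)\circ\phi$, where $\delta^0_{B}c=[c,\cdot]_{B}$ is the inner derivation of $\mathcal{B}$ attached to $c$; this inner derivation is a Hochschild $1$-coboundary of $\mathcal{B}$, hence $\delta^1_{Hom}(\delta^0_{B}c)=0$ and it is invisible to the $\mathcal{B}$-slot. Concretely $\delta^1(\eta_A,\eta_B,c)=\delta^1(\eta_A,\eta_B+\delta^0_{B}c,0)$, so the given coboundary already admits a representative of the admissible shape. Taking $\psi_{A,n+1}=\eta_A$ and $\psi_{B,n+1}=\eta_B+\delta^0_{B}c$ then forces $\widetilde{\mu}_{*,n+1}=0$ and $\widetilde{\phi}_{n+1}=0$, i.e. $\widetilde{\theta}_{n+1}=\theta_{n+1}-\delta^1\xi=0$, which closes the induction step.

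I expect the genuine obstacle to be this third-slot bookkeeping rather than the algebra slots. The identities $\widetilde{\mu}_{*,n+1}=\mu_{*,n+1}-\delta^1_{Hom}\psi_{*,n+1}$ are exactly the Hom-associative analogue of Gerstenhaber's computation and can be quoted from \cite{aem,m}; what needs care is (i) tracking the correct sign of the conjugation action $\psi_{B,t}\circ\phi_t\circ\psi_{A,t}^{-1}$ on the morphism slot, and (ii) checking that feeding the inner derivation $\delta^0_{B}c$ into $\psi_{B,n+1}$ absorbs the $\delta^0_{Hom}c$ appearing in $\phi_{n+1}$ without reintroducing anything into $\mu_{B,n+1}$. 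One must also confirm that $\eta_A,\eta_B,\delta^0_{B}c$ are genuine cochains of the twisted complex, i.e. that they commute appropriately with the structure maps $\alpha,\alpha'$; this follows from multiplicativity of the algebras exactly as in Section $2$.
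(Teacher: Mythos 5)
Your gauge computation at order $t^{n+1}$ is correct and matches the paper's identities \eqref{mu} and \eqref{equivalence}; note also that the paper itself gives no argument for this theorem beyond citing \cite{d}, and your overall strategy (transport by a formal automorphism pair starting at order $t^{n+1}$) is the intended one. The genuine gap sits exactly at the step you yourself flagged as the crux: the absorption identity $\delta^{1}(\eta_A,\eta_B,c)=\delta^{1}(\eta_A,\eta_B+\delta^{0}_{B}c,0)$ requires $\delta^{1}_{Hom}(\delta^{0}_{B}c)=0$, i.e.\ that the inner derivation $g_c(y)=\mu_B(y,c)-\mu_B(c,y)$ is a Hom-Hochschild $1$-cocycle of $\mathcal{B}$. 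That is a classical fact for $\alpha_B=\mathrm{id}$, but it fails in the twisted complex: even assuming $\alpha_B(c)=c$ (which you would separately need for $g_c$ to lie in $C^{1}_{Hom}(\mathcal{B},\mathcal{B})$ at all --- multiplicativity alone does not give it, contrary to your closing remark), formula \eqref{hom} with $n=1$ yields, after applying Hom-associativity to the terms $\mu_B(\mu_B(x,y),c)$ and $\mu_B(c,\mu_B(x,y))$,
\begin{align*}
\delta^{1}_{Hom}g_c(x,y)&=\mu_B(x,\mu_B(y,c))-\mu_B(\alpha_B(x),\mu_B(y,c))-\mu_B(x,\mu_B(c,y))\\
&\quad+\mu_B(\mu_B(x,c),y)+\mu_B(\mu_B(c,x),\alpha_B(y))-\mu_B(\mu_B(c,x),y),
\end{align*}
which does not vanish when $\alpha_B\neq\mathrm{id}$. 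This is precisely why the paper (like \cite{aem}) defines the coboundary operators only for $n\geq1$ and never defines $\delta^{0}_{Hom}$: the degree-zero piece of the twisted complex does not exist in the naive form, since \eqref{hom} at $n=0$ would require $\alpha^{-1}$.

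There are two consistent readings of the morphism complex at $n=1$, and your proof must commit to one. If, as the paper's conventions suggest, the term $\delta^{n-1}_{Hom}\varphi_{3}$ is simply absent for $n=1$ (equivalently $\delta^{0}\equiv 0$), then a $2$-coboundary automatically satisfies $\phi_{n+1}=\phi\circ\eta_A-\eta_B\circ\phi$ with no $c$-term, your third-slot maneuver is unnecessary, and taking $\psi_{A,n+1}=\eta_A$, $\psi_{B,n+1}=\eta_B$ already closes the induction --- that is the correct proof under the paper's definitions. If instead one insists on the classical convention $\delta^{0}_{Hom}c=g_c\circ\phi$, then your absorption step is genuinely broken for $\alpha_B\neq\mathrm{id}$ and would need extra hypotheses (for instance invertibility of $\alpha_B$ with a suitably twisted definition of $g_c$, or restricting $C^{0}$ to elements on which $g_c$ really is a cocycle). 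As written, your argument transplants a classical Hochschild fact into a setting where it is false.
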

\begin{proof}
The proof is similar to that in \cite{d}.
\end{proof}
A Hom-associative algebras morphism for which every formal deformation is
equivalent to a trivial deformation $(\mu_{A,0},\mu_{B,0},\psi)$ is said to be analytically rigid.
The vanishing of the second cohomology group $(H^{2}_{Hom}(\phi,\phi)=0)$
gives a sufficient criterion for rigidity.
\subsubsection{Obstructions }
A deformation of order $N$ of $\phi$ is  a triple, $\Theta_{t}=(\mu_{A,t},\mu_{B,t},\phi_{t})$
satisfying $\phi_{t}(\mu_{A,t}(a,b))=\mu_{B,t}(\phi_{t}(a),\phi_{t}(b))$
or equivalently
$\sum\limits_{i=0}^{n}\phi_{i}(\mu_{A,n-i}(a,b))=\sum_{i+j+k=n}\mu_{B,i}(\phi_{j}(a),\phi_{k}(b))$, for $n\leq N$.\\
Given a deformation $\Theta_{t}$ of order $N$, it is said  extended to order $N+1$
if and only if there exists a $2$-cochain $\theta_{N+1}=(\mu_{A,N+1},\mu_{B,N+1},\phi_{N+1})
\in C_{Hom}^{2}(\phi,\phi)$ such that $\overline{\Theta}_{t}=\Theta_{t}+t^{N+1}\theta_{N+1}$
is a deformation of order $N+1$.
$\overline{\Theta}_{t}$ is called an order $N+1$ extension of $\Theta_{t}$.\\
Let $\mu_{\mathcal{A},t}=\sum\limits_{i=0}^{N+1}\mu_{i}t^{i}$
be a deformation of $\mathcal{A},$ the primary obstruction is $$-\sum\limits_{p+q=N+1,
p>0,q>0} \mu_{p}\circ_{\alpha}\mu_{q}$$ which is a $3$-cocycle.\\
The analogous primary obstruction to an infinitesimal deformation of a morphism
$\phi:\mathcal{A}\rightarrow\mathcal{B}$ is obtained by calculating the second
order term in the deformation equation of $\phi$. Then
\begin{equation*}\begin{array}{llll}
&\mu_{\mathcal{B},2}(\phi(a),\phi(b))+\mu_{\mathcal{B},1}(\phi_{1}(a),\phi(b))
+\mu_{\mathcal{B},1}(\phi(a),\phi_{1}(b))\\[1pt]
&+\mu_{\mathcal{B},0}(\phi_{1}(a),\phi_{1}(b))+\mu_{\mathcal{B},0}(\phi_{2}(a),\phi(b))
+\mu_{\mathcal{B},0}(\phi(b),\phi_{2}(b)\\[1pt]
&=\phi_{2}\mu_{\mathcal{A},0}(a,b)+\phi_{1}\mu_{1,\mathcal{A}}(a,b)+\phi
\mu_{\mathcal{A},2}(a,b)
\end{array}
\end{equation*}
giving
\begin{equation*}\begin{array}{llll}
\mu_{\mathcal{B},1}\overline{\circ}\phi_{1}-\phi_{1}\circ\mu_{\mathcal{A},1}
+\phi_{1}\smile\phi_{1}
=\phi\mu_{\mathcal{A},2}(a,b)-\mu_{\mathcal{B},2}(\phi(a),\phi(b))-\delta_{Hom}^{1}\phi_{2}.
\end{array}
\end{equation*}
Since $\mu_{*,1}\circ_{\alpha}\mu_{*,1}=-\delta_{Hom}^{2}\mu_{*,2}$, for $*\in \{\mathcal{A},\mathcal{B}\}$,
we have
\begin{equation*}
\small{(\mu_{\mathcal{A},1}\overline{\circ}\mu_{\mathcal{A},1},\mu_{\mathcal{B},1}\overline{\circ}
\mu_{\mathcal{B},1},~\mu_{1,\mathcal{B}}\widehat{\circ}\phi_{1}-\phi_{1}\circ
\mu_{\mathcal{A},1}+\phi_{1}\smile\phi_{1})=\delta^{2}(\mu_{\mathcal{A},2},\mu_{\mathcal{B},2},\phi_{2})},
\end{equation*}
where $\overline{\circ}$ is defined in  \eqref{cup produit 1} and $\widehat{\circ}$
in \eqref{cup}.\\
Set $\mathcal{O}b_{\mathcal{A}}=\sum\limits_{\begin{subarray}{l} p+q=N+1\\
p>0,q>0
\end{subarray}} \frac{1}{2}[\mu_{A,p},\mu_{A,q}]^{\Delta}_{\alpha}$ for  the obstruction of a
deformation of a Hom-associative algebra $\mathcal{A}$,
$\mathcal{O}b_{\mathcal{B}}=\sum\limits_{\begin{subarray}{l} p+q=N+1\\
p>0,q>0
\end{subarray}} \frac{1}{2}[\mu_{B,p},\mu_{B,q}]^{\Delta}_{\alpha}$ for the obstruction
of a deformation of Hom-associative algebra $\mathcal{B}$ and
$$\mathcal{O}b_{\phi}=\sum\limits_{\begin{subarray}{l} p+q=N+1\\
p>0,q>0
\end{subarray}} \mu_{B,p}\overline{\circ}\phi_{q}-
\sum\limits_{\begin{subarray}{l} p+q=N+1\\
p>0,q>0
\end{subarray}}\phi_{p}\circ\mu_{A,q}+
\sum\limits_{\begin{subarray}{l} p+q=N+1\\
p>0,q>0
\end{subarray}}\phi_{p}\smile\phi_{q}+\sum\limits_{\begin{subarray}{l} p+q=N+1\\
p>0,q>0,k>0
\end{subarray}}\mu_{B,p}\circ(\phi_{q},\phi_{k})$$
for the obstruction of the extension of the Hom-associative algebra morphism $\phi$.
\begin{thm}
Let $(\mathcal{A},\mu_{A,0},\alpha_{A})$ and $(\mathcal{B},\mu_{B,0},\alpha_{B})$
be two Hom-associative algebras.
Let $\phi:\mathcal{A}\rightarrow\mathcal{B}$ be a Hom-associative  algebra morphism
and $\Theta_{t}=(\mu_{A,t},\mu_{B,t},\phi_{t})$
be an order $k$ one-parameter formal deformation of $\phi$.\\
Then $\mathcal{O}b=(\mathcal{O}b_{\mathcal{A}},\mathcal{O}b_{\mathcal{B}},
\mathcal{O}b_{\phi})\in Z_{Hom}^{3}(\phi,\phi)$.
Therefore the deformation extends to a deformation of order $k+1$ if and only if
$\mathcal{O}b_{\phi}$ is a coboundary.
\end{thm}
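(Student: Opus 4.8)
The plan is to show $\delta^{3}\mathcal{O}b=0$ directly and then read the extension criterion off the order-$(k+1)$ deformation equations. By the coboundary formula on the morphism complex,
\[
\delta^{3}\mathcal{O}b=\big(\delta^{3}_{Hom}\mathcal{O}b_{\mathcal{A}},\ \delta^{3}_{Hom}\mathcal{O}b_{\mathcal{B}},\ \phi\circ\mathcal{O}b_{\mathcal{A}}-\mathcal{O}b_{\mathcal{B}}\circ\phi^{\otimes 3}-\delta^{2}_{Hom}\mathcal{O}b_{\phi}\big),
\]
so membership in $Z^{3}_{Hom}(\phi,\phi)$ reduces to three identities. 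I would dispatch the two algebra slots first and treat the mixed slot last, the latter being the genuinely new point.

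For the two algebra components I would use the graded Lie algebra $(C_{\alpha}(\mathcal{A},\mathcal{A}),[\cdot,\cdot]^{\Delta}_{\alpha})$ together with the identification $\delta_{Hom}\psi=\pm[\mu_{A,0},\psi]^{\Delta}_{\alpha}$ recorded in \cite{aem}, noting that $[\mu_{A,0},\mu_{A,0}]^{\Delta}_{\alpha}=0$ expresses Hom-associativity of $\mu_{A,0}$, so $[\mu_{A,0},-]^{\Delta}_{\alpha}$ squares to zero. Writing $\mathcal{O}b_{\mathcal{A}}=\tfrac12\sum_{p+q=k+1,\,p,q>0}[\mu_{A,p},\mu_{A,q}]^{\Delta}_{\alpha}$ and applying $\delta^{3}_{Hom}$, the graded Jacobi identity turns $\delta^{3}_{Hom}\mathcal{O}b_{\mathcal{A}}$ into a sum of terms $[[\mu_{A,0},\mu_{A,p}],\mu_{A,q}]^{\Delta}_{\alpha}$; the lower-order deformation equations $\sum_{i+j=s}[\mu_{A,i},\mu_{A,j}]^{\Delta}_{\alpha}=0$ for $s\le k$ make these telescope to zero. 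This is exactly the classical proof that the primary obstruction of a (Hom-)associative algebra is a $3$-cocycle, applied verbatim to $\mathcal{A}$ and to $\mathcal{B}$.

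The main obstacle is the mixed identity
\[
\phi\circ\mathcal{O}b_{\mathcal{A}}-\mathcal{O}b_{\mathcal{B}}\circ\phi^{\otimes 3}=\delta^{2}_{Hom}\mathcal{O}b_{\phi}.
\]
I would obtain it by expanding the morphism deformation equation $\phi_{t}\circ\mu_{A,t}=\mu_{B,t}\circ(\phi_{t}\otimes\phi_{t})$ and collecting the coefficient of $t^{k+1}$, namely $\sum_{i+j=k+1}\phi_{i}\circ\mu_{A,j}=\sum_{i+j+l=k+1}\mu_{B,i}\circ(\phi_{j}\otimes\phi_{l})$, then separating the top-order data $\mu_{A,k+1},\mu_{B,k+1},\phi_{k+1}$ (which assemble into $\phi\circ\mu_{A,k+1}-\mu_{B,k+1}\circ\phi^{\otimes 2}-\delta^{1}_{Hom}\phi_{k+1}$) from the purely lower-order terms, the latter regrouping exactly into $\mathcal{O}b_{\phi}$ through the products $\overline{\circ}$, $\smile$ and $\mu_{B,p}\circ(\phi_{q},\phi_{l})$ of the definition. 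To match $\phi\circ\mathcal{O}b_{\mathcal{A}}-\mathcal{O}b_{\mathcal{B}}\circ\phi^{\otimes 3}$ with $\delta^{2}_{Hom}\mathcal{O}b_{\phi}$ I would repeatedly feed in the lower-order morphism equations (valid for $s\le k$) to trade each $\phi_{j}\circ\mu_{A,i}$ for a $\mu_{B}$-expression, so that the extremal slots of the Hochschild differential acting on $\mathcal{O}b_{\phi}$ are produced by the terms $\mu_{B}(\phi(\cdot),-)$ and $\mu_{B}(-,\phi(\cdot))$. The delicate part is purely bookkeeping of the twist: every argument carries a power of $\alpha$ or $\beta$, each reindexing in $\overline{\circ}$ and in $\mu_{B,p}\circ(\phi_{q},\phi_{l})$ carries its Koszul sign, and $\phi\circ\alpha=\beta\circ\phi$ must be invoked to push $\phi$ past the twisting maps; grouping terms by which of $\mu_{A,k+1},\mu_{B,k+1},\phi_{k+1}$ they avoid and checking pairwise cancellation of the residual mixed products yields the identity. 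This is the Hom-twisted analogue of the Gerstenhaber--Schack computation in \cite{ms3}, and essentially all the labour lives here.

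Finally, for the extension criterion I would observe that a triple $\theta_{k+1}=(\mu_{A,k+1},\mu_{B,k+1},\phi_{k+1})$ extends $\Theta_{t}$ to order $k+1$ precisely when the order-$(k+1)$ parts of the three deformation equations hold; by the computations above these read
\[
\delta^{2}\theta_{k+1}=\big(\delta^{2}_{Hom}\mu_{A,k+1},\ \delta^{2}_{Hom}\mu_{B,k+1},\ \phi\circ\mu_{A,k+1}-\mu_{B,k+1}\circ\phi^{\otimes 2}-\delta^{1}_{Hom}\phi_{k+1}\big)=\mathcal{O}b,
\]
with signs absorbed into the definition of $\mathcal{O}b$. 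Hence an extension exists if and only if $\mathcal{O}b$ is a coboundary in $C^{3}_{Hom}(\phi,\phi)$. Since the algebra obstructions $\mathcal{O}b_{\mathcal{A}},\mathcal{O}b_{\mathcal{B}}$ are resolved by the chosen extensions $\mu_{A,k+1},\mu_{B,k+1}$ of the underlying algebra deformations, the cocycle identity just proved shows that the residual obstruction is carried by the third slot, so the morphism deformation extends if and only if $\mathcal{O}b_{\phi}$ is a coboundary, as claimed.
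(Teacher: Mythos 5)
Your proposal is correct and takes essentially the same route as the paper: the two algebra slots are dispatched by the single-algebra obstruction theory (you supply the standard argument via $\delta_{Hom}=\pm[\mu_{*,0},\cdot]^{\Delta}_{\alpha}$, graded Jacobi and the lower-order deformation equations, where the paper simply records $\delta^{3}\mathcal{O}b_{\mathcal{A}}=\delta^{3}\mathcal{O}b_{\mathcal{B}}=0$ as known), and everything reduces to the mixed identity $\phi\circ\mathcal{O}b_{\mathcal{A}}-\mathcal{O}b_{\mathcal{B}}\circ\phi^{\otimes 3}-\delta^{2}_{Hom}\mathcal{O}b_{\phi}=0$, which the paper establishes by exactly the sum-splitting and pairwise cancellation you outline, feeding in the lower-order morphism equations, the compatibility $\phi\circ\alpha_{A}=\alpha_{B}\circ\phi$, and Hom-associativity, following Yau's dialgebra computation. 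Your closing step, reading the extension criterion off $\delta^{2}(\mu_{A,k+1},\mu_{B,k+1},\phi_{k+1})=\mathcal{O}b$, likewise matches the paper's conclusion.
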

\begin{proof}
We show that the 3-cochain $(\mathcal{O}b_{\mathcal{A}},
\mathcal{O}b_{\mathcal{B}},\mathcal{O}b_{\phi})$ is
a cocycle,  i.e.
$\delta^{3}(\mathcal{O}b_{\mathcal{A}},\mathcal{O}b_{\mathcal{B}},
\mathcal{O}b_{\phi})=0$, i.e. $(\delta^{3}\mathcal{O}b_{\mathcal{A}},
\delta^{3}\mathcal{O}b_{\mathcal{B}},\phi\mathcal{O}b_{\mathcal{A}}
-\mathcal{O}b_{\mathcal{B}}\phi-\delta^{2}\mathcal{O}b_{\phi})=0$. One has already that $\delta^{3}\mathcal{O}b_{\mathcal{A}}=0$ and

 $\delta^{3}\mathcal{O}b_{\mathcal{B}}=0$ then it remains to show
that $\phi\mathcal{O}b_{\mathcal{A}}-\mathcal{O}b_{\mathcal{B}}\phi-\delta^{2}
\mathcal{O}b_{\phi}=0$.

\begin{eqnarray*}
\phi\mathcal{O}b_{\mathcal{A}}&=&\phi(\sum\limits_{p+q=N+1,p>0,q>0}\mu_{A,p}\circ_{\alpha}
\mu_{A,q})\\[1pt]
&=&\phi(\sum\limits_{p+q=N+1,p>0,q>0}\mu_{A,p}\circ(\mu_{A,q}\otimes\alpha)-\mu
_{A,p}\circ(\alpha_{A}\otimes\mu_{A,q}))\\[1pt]
&=&(\sum\limits_{p+q=N+1,p>0,q>0}(\phi\circ\mu_{A,p})\circ(\mu_{A,q}
\otimes\alpha_{A})-(\phi\circ\mu_{A,p})
\circ(\alpha_{A}\otimes\mu_{A,q})),
\end{eqnarray*}
\begin{eqnarray*}
\mathcal{O}b_{\mathcal{B}}\phi&=&\sum\limits_{p+q=N+1,p>0,q>0}(\mu_{B,p}\circ(\mu
_{B,q}\otimes\alpha_{B})-\mu_{B,p}\circ(\alpha_{B}\otimes\mu_{B,q}))\phi\\[1pt]
&=&\sum\limits_{p+q=N+1,p>0,q>0}\mu_{B,p}\circ(\mu_{B,q}\circ(\phi\circ\phi)\otimes\alpha_{B}\circ\phi)\\[1pt]
&-&\sum\limits_{p+q=N+1,p>0,q>0}\mu_{B,p}\circ(\alpha_{B}\circ\phi\otimes\mu_{B,q} \circ(\phi\otimes\phi)),
\end{eqnarray*}
\begin{eqnarray*}
\delta^{2}(\phi_{p}\circ\mu_{A,q})
&=&\mu_{B,0}\circ(\phi\circ\alpha_{A}\otimes\phi_{p}\circ\mu_{A,q})
-\phi_{p}\circ\mu_{A,q}\circ(\mu_{A,0}\otimes\alpha_{A})\\[1pt]
&+&\phi_{p}\circ\mu_{A,q}\circ(\alpha_{A}\otimes\mu_{A,0})-
\mu_{B,0}\circ(\phi_{p}\circ\mu_{A,q}\otimes\phi\circ\alpha_{A})\\[1pt]
&=&\mu_{B,0}\circ(\phi\otimes\phi_{p})\circ(\alpha_{A}\otimes\mu_{A,q})
-\phi_{p}\circ\mu_{A,q}\circ(\mu_{A,0}\otimes\alpha_{A})\\[1pt]
&+&\phi_{p}\circ\mu_{A,q}\circ(\alpha_{A}\otimes\mu_{A,0})-
\mu_{B,0}\circ(\phi_{p}\otimes\phi)\circ(\mu_{A,q}\otimes\alpha_{A}),
\end{eqnarray*}
\begin{eqnarray*}
\delta^{2}(\phi_{p}\smile\phi_{q})&=&
\mu_{B,0}\circ(\phi\circ\alpha_{A}\otimes\phi_{p}\smile\phi_{q})
-\phi_{p}\smile\phi_{q}\circ(\mu_{A,0}\otimes\alpha_{A})\\[1pt]
&+&\phi_{p}\smile\phi_{q}
\circ(\alpha_{A}\otimes\mu_{A,0})-\mu_{B,0}\circ(\phi_{p}\smile\phi_{q}
\otimes\phi\circ\alpha_{A})\\[1pt]
&=&\mu_{B,0}\circ(\phi\circ\alpha_{A}\otimes\mu_{B,0}\circ(\phi_{p}\otimes\phi_{q}))
-\mu_{B,0}\circ(\phi_{p}\otimes\phi_{q})\circ(\mu_{A,0}\otimes\alpha_{A})\\[1pt]
&+&\mu_{B,0}\circ(\phi_{p}\otimes\phi_{q})\circ(\alpha_{A}\otimes\mu_{A,0})
-\mu_{B,0}\circ(\mu_{B,0}\circ(\phi_{p}\otimes\phi_{q})\otimes\phi\otimes\alpha_{A})\\[1pt]
&=&\mu_{B,0}\circ(\phi\otimes\mu_{B,0})\circ(\alpha_{A}\otimes\phi_{p}\otimes\phi_{q})
-\mu_{B,0}\circ(\phi_{p}\otimes\phi_{q})\circ(\mu_{A,0}\otimes\alpha_{A})\\[1pt]
&+&\mu_{B,0}\circ(\phi_{p}\otimes\phi_{q})\circ(\alpha_{A}\otimes\mu_{A,0})
-\mu_{B,0}\circ(\mu_{B,0}\otimes\phi)\circ(\phi_{p}\otimes\phi_{q}\otimes\alpha_{A}),
\end{eqnarray*}
\begin{eqnarray*}
 \delta^{2}(\mu_{B,p}\overline{\circ}\phi_{q})
 &=&\mu_{B,0}\circ(\phi\circ\alpha_{A}\otimes\mu_{B,p}\overline{\circ}\phi_{q})
 -\mu_{B,p}\overline{\circ}\phi_{q}\circ(\mu_{A,0}\otimes\alpha_{A})\\[1pt]
 &+&\mu_{B,p}\overline{\circ}\phi_{q}\circ(\alpha_{A}\otimes\mu_{A,0})
 -\mu_{B,0}\circ(\mu_{B,p}\overline{\circ}\phi_{q}\otimes\psi\circ\alpha_{A})\\[1pt]
 &=&\mu_{B,0}\circ(\phi\circ\alpha_{A}\otimes\mu_{B,p}\circ(\phi_{q}\otimes\phi))
 +\mu_{B,0}\circ(\phi\circ\alpha_{A}\otimes\mu_{B,p}\circ(\phi\otimes\phi_{q}))\\[1pt]
 &-&\mu_{B,p}\circ(\phi_{q}\otimes\phi)\circ(\mu_{A,0}\otimes\alpha_{A})
 -\mu_{B,p}\circ(\phi\otimes\phi_{q}\circ(\mu_{A,0}\otimes\alpha_{A})\\[1pt]
 &+&\mu_{B,p}\circ(\phi_{q}\otimes\phi)\circ(\alpha_{A}\otimes\mu_{A,0})
 +\mu_{B,p}\circ(\phi\otimes\phi_{q})\circ(\alpha_{A}\otimes\mu_{A,0})\\[1pt]
 &-&\mu_{B,0}\circ(\mu_{B,p}\circ(\phi_{q}\otimes\phi)\otimes\phi\circ\alpha_{A})
 -\mu_{B,0}\circ(\mu_{B,p}\circ(\phi\otimes\phi_{q})\otimes\phi\circ\alpha_{A})\\
 &=&\mu_{B,0}\circ(\phi\otimes\mu_{B,p})\circ(\alpha_{A}\otimes\phi_{q}\otimes\phi)
 +\mu_{B,0}\circ(\phi\otimes\mu_{B,p})\circ(\alpha_{A}\otimes\phi\otimes\phi_{q})\\[1pt]
 &-&\mu_{B,p}\circ(\phi_{q}\otimes\phi)\circ(\mu_{A,0}\otimes\alpha_{A})
 -\mu_{B,p}\circ(\phi\otimes\phi_{q})\circ(\mu_{A,0}\otimes\alpha_{A})\\[1pt]
 &+&\mu_{B,p}\circ(\phi_{q}\otimes\phi)\circ(\alpha_{A}\otimes\mu_{A,0})
 +\mu_{B,p}\circ(\phi\otimes\phi_{q})\circ(\alpha_{A}\otimes\mu_{A,0})\\[1pt]
 &-&\mu_{B,0}\circ(\mu_{B,p}\otimes\phi)\circ(\phi_{q}\otimes\phi\otimes\alpha_{A})
 -\mu_{B,0}\circ(\mu_{B,p}\otimes\phi)\circ(\phi\otimes\phi_{q}\otimes\alpha_{A}),
\end{eqnarray*}
\begin{eqnarray*}
\delta^{2}(\mu_{B,p}\circ(\phi_{p}\otimes\phi_{q}))&=&
\mu_{B,0}\circ(\phi\circ\alpha_{A}\otimes\mu_{B,p}\circ(\phi_{p}\otimes\phi_{q}))
-\mu_{B,p}\circ(\phi_{p}\otimes\phi_{q})\circ(\mu_{A,0}\otimes\alpha_{A})\\[1pt]
&+&\mu_{B,p}\circ(\phi_{p}\otimes\phi_{q})
\circ(\alpha_{A}\otimes\mu_{A,0})-\mu_{B,0}\circ(\mu_{B,p}\circ(\phi_{p}\otimes\phi_{q})
\otimes\phi\circ\alpha_{A}),
\end{eqnarray*}
$\phi\mathcal{O}b_{\mathcal{A}}-\mathcal{O}b_{B}\phi-\delta^{2}(\mathcal{O}b(\phi))=0$.\\
The proof is similar to that of Lemma $6.2$ in \cite{d} by setting
$$\begin{array}{llcl}\sum'=\sum\limits_{\begin{subarray}{l} i+j=N+1\\
~~i,j>0\\~~~k=0
\end{subarray}}+\sum\limits_{\begin{subarray}{l} i+k=N+1\\
~~i,k>0\\~~~j=0
\end{subarray}}+\sum\limits_{\begin{subarray}{l} j+k=N+1\\
~~j,k>0\\~~~i=0
\end{subarray}}
\sum\limits_{\begin{subarray}{l} i+k+j=N+1\\
~~~i,k,j>0
\end{subarray}}
\end{array}$$
and
$$\widetilde{\sum}=\sum\limits_{\begin{subarray}{l}
\alpha+\beta+\gamma+\lambda+\mu=N+1\\
~~~1\leq\alpha+\beta+\gamma\leq N\\~~~~\alpha,\beta,\gamma,\lambda,\mu\geq 0
\end{subarray}}+\sum\limits_{\begin{subarray}{l}
\alpha+\beta=N+1\\
\gamma=\lambda=\mu= 0
\end{subarray}}+\sum\limits_{\begin{subarray}{l}
\alpha+\gamma=N+1\\
\beta=\lambda=\mu=0
\end{subarray}}+\sum\limits_{\begin{subarray}{l}
\beta+\gamma=N+1\\
\alpha=\lambda=\mu=0
\end{subarray}}.$$
By  Hom-associativity, we get
$$\widetilde{\sum}[\mu_{B,\lambda}(\phi_{\alpha}\circ\alpha\otimes\mu_{\mu}(\phi_{\beta}
\otimes\phi_{\gamma}))-\mu_{B,\lambda}(\mu_{B,\mu}(\phi_{\alpha}\otimes\phi_{\beta})
\otimes\phi_{\gamma}\circ\alpha))]=0.$$
One has moreover
$$\delta^{2}(\mu_{A,N+1},\mu_{B,N+1},\phi_{N+1})=\sum\limits_{\begin{subarray}{l} p+q=N+1\\
~p>0,q>0
\end{subarray}}(\phi\mathcal{O}b_{\mathcal{A}}-\mathcal{O}b_{\mathcal{B}}\phi,
\mathcal{O}b_{\phi}).$$
Then, the order $N$ formal deformation extends to an order $N+1$ formal deformation
whenever \\
$\sum\limits_{\begin{subarray}{l} p+q=N+1\\
~p>0,q>0
\end{subarray}}(\mu_{A,p}\circ_{\alpha}\mu_{A,q},
\mu_{B,p}\circ_{\alpha}\mu_{B,q},\mu_{B,p}\overline{\circ}\phi_{q}-\phi_{p}\widehat{\circ}
\mu_{A,q}+\phi_{q}\smile\phi_{p}+\sum\limits_{k>0}\mu_{B,p}\circ(\phi_{q}\otimes\phi_{k}))$
is a coboundary.\\
Setting
\begin{eqnarray*}
&&-\sum'\mu_{B,p}\circ(\phi_{p}\otimes\phi_{k})\circ(\mu_{A,0}\otimes\alpha_{A})
=-\sum_{\begin{subarray}{l}p,q>0\\~k=0 \end{subarray}}\mu_{B,p}\circ(\phi_{q}\circ\mu_{A,0}\otimes\phi\circ\alpha_{A})\\[1pt]
&&-\sum_{\begin{subarray}{l} p,k>0\\~q=0\end{subarray}}\mu_{B,p}\circ
(\phi\circ\mu_{A,0}\otimes\phi_{k}\circ\alpha_{A})
-\sum_{\begin{subarray}{l} q,k>0\\~p=0\end{subarray}}\mu_{B,0}\circ
(\phi_{q}\circ\mu_{A,0}\otimes\phi_{k}\circ\alpha_{A})\\[1pt]
&&-\sum_{p,q,k>0}\mu_{B,p}\circ(\phi_{p}\circ\mu_{A,0}\otimes\phi_{k}\circ\alpha_{A})
\end{eqnarray*}
and as well decomposing the sum on the right hand side of
$\sum'\mu_{B,p}\circ(\phi_{q}\circ\mu_{A,0}\otimes\phi_{k}\circ\alpha_{A})$
as follows: $$\sum'=\sum\limits_{\begin{subarray}{l}p,q>0\\~k=0 \end{subarray}}+\sum\limits_{\begin{subarray}{l} p,k>0\\~q=0\end{subarray}}
+\sum\limits_{\begin{subarray}{l} q,k>0\\~p=0\end{subarray}}+\sum\limits_{p,q,k>0},$$
we can write
\begin{eqnarray}
  \sum'\mu_{B,p}\circ(\phi_{q}\circ\mu_{A,0}\otimes\phi_{k}\circ\alpha_{A})
  \label{clair1}&=&\sum'_{\begin{subarray}{l} \alpha+\beta+\gamma=q\\ ~\alpha,\beta,\gamma\geq0\end{subarray}}
  \mu_{B,i}\circ(\mu_{\beta,\alpha}\circ(\phi_{\beta}\otimes\phi_{\gamma})\otimes\phi_{k}\circ\alpha_{A})\\[1pt]
  &+&\label{clair2}\sum'_{\begin{subarray}{l} \lambda+\mu=q\\1\leq\mu\leq q\end{subarray}}\mu_{B,i}
  (\phi_{\lambda}\circ\mu_{A,\mu}\otimes\phi_{k}\circ\alpha_{A})
\end{eqnarray}
with
\begin{eqnarray}
 \eqref{clair1}&=&-\sum_{\begin{subarray}{l} i+\alpha+\beta+\gamma+k=N+1\\~~~~~~~~k=0\\~~~~i,\beta+\alpha+\gamma>0\end{subarray}}
 \mu_{B,i}\circ(\mu_{B,\alpha}\circ(\phi_{\beta}\otimes\phi_{\gamma})\otimes\phi\circ\alpha_{A})\label{clair3}\\[1pt]
 \nonumber\label{clair4}&&-\sum_{\begin{subarray}{l}\alpha=\beta=\gamma=0\\~~~i,k>0\\i+k=N+1\end{subarray}}
 \mu_{B,i}\circ(\mu_{B,0}\circ(\phi\otimes\phi)\otimes\phi_{k}\circ\alpha_{A})\\[1pt]
 \nonumber\label{clair5}&&-\sum_{\begin{subarray}{l}k,\alpha+\beta+\gamma>0\\~~~~i=0\end{subarray}}
 \mu_{B,0}\circ(\mu_{B,\alpha}\circ(\phi_{\beta}\otimes\phi_{\gamma})\otimes\phi_{k}\circ\alpha_{A})\\[1pt]
 \nonumber&&\label{clair6}-\sum_{i,\alpha+\beta+\gamma,k>0}\mu_{B,i}\circ(\mu_{B,\alpha}\circ(\phi_{\beta}\otimes\phi_{\gamma})
 \otimes\phi_{k}\circ\alpha_{A})
\end{eqnarray}

The sum on the right hand side of \eqref{clair2} write $\sum'\limits_{\lambda+\mu=q}=\sum\limits_{\begin{subarray}{l}~~~~i,\mu>0\\i+\lambda+\mu=N+1\\~~~~k=0\end{subarray}}+
\sum\limits_{\begin{subarray}{l}\lambda+\mu+k=N+1\\~~ \mu,k>0,\lambda\geq0\end{subarray}}+\sum\limits_{\begin{subarray}{l}~~~~i,\mu,k>0\\i+\lambda+\mu+k=N+1\end{subarray}}$\\
and
\begin{eqnarray}
  \eqref{clair3}=&&-\sum_{\begin{subarray}{l}~~~i,\alpha>0\\i+\alpha=N+1\end{subarray}}
  \mu_{B,i}\circ(\mu_{B,\alpha}\circ(\phi\otimes\phi)\otimes\phi\circ\alpha_{A})\label{daguere1}\\[1pt]
  \nonumber&&\label{daguere2}-\sum_{\begin{subarray}{l}i+\alpha+\beta+\gamma=N+1\\~~~~i,\beta+\gamma>0\\~~~~ \alpha,\beta,\gamma\geq0\end{subarray}}
  \mu_{B,i}\circ(\mu_{B,\alpha}\circ(\phi_{\beta}\otimes\phi_{\gamma})\otimes\phi\circ\alpha_{A}).
\end{eqnarray}
Setting
\begin{eqnarray*}
&&\sum'_{i,j,k}\mu_{B,i}\circ(\phi_{j}\circ\alpha_{A}\otimes\phi_{k}\circ\mu_{A,0})
=\sum_{\begin{subarray}{l}j+k=N+1\\~~~j,k>0\end{subarray}}\mu_{B,0}\circ(\phi_{j}\circ\alpha_{A}\otimes\phi_{k}\circ\mu_{A,0})
\\[1pt]
&&+\sum_{\begin{subarray}{l}i+k=N+1\\~~~i,k>0\end{subarray}}\mu_{B,i}\circ(\phi\circ\alpha_{A}\otimes\phi_{k}\circ\mu_{A,0})
+\sum_{\begin{subarray}{l}i+j=N+1\\~~~i,j>0\end{subarray}}\mu_{B,i}\circ(\phi_{j}\circ\alpha_{A}\otimes\phi\circ\mu_{A,0})\\[1pt]
&&+\sum_{\begin{subarray}{l}~~~i,j,k>0\\i+j+k=N+1\end{subarray}}\mu_{B,i}\circ(\phi_{j}\circ\alpha_{A}\otimes\phi_{k}\circ\mu_{A,0}),
\end{eqnarray*}
equally we have
\begin{eqnarray}
 \sum'_{i+j+k=N+1}\mu_{B,i}\circ(\phi_{j}\circ\alpha_{A}\otimes\phi_{k}\circ\mu_{A,0})
 \label{mairie1}=\sum'_{i,j,\alpha,\beta,\gamma}\mu_{B,i}\circ(\phi_{j}\circ\alpha_{A}\otimes\mu_{B,\alpha}\circ(
 \phi_{\beta}\otimes\phi_{\gamma}))\\[1pt]
\label{mairie2} -\sum'_{\begin{subarray}{l}\lambda+\mu=k\\1\leq\mu\leq k\end{subarray}}
\mu_{B,i}\circ(\phi_{j}\circ\alpha_{A}\otimes\phi_{\lambda}\circ\mu_{A,\mu})
\end{eqnarray}
\begin{small}
We can write \eqref{mairie1} as
\begin{eqnarray}
  \eqref{mairie1}
 \label{mairie11} &=&\sum_{\begin{subarray}{l}i+\alpha+\beta+\gamma=N+1\\~~~i,\alpha+\beta+\gamma>0\\ ~~~\alpha,\beta,\gamma\geq0\end{subarray}}
  \mu_{B,i}(\phi\circ\alpha_{A}\otimes\mu_{B,\alpha}\circ(\phi_{\beta}\otimes\phi_{\gamma}))
\\[1pt] &&
  \nonumber+ \label{mairie12}\sum_{\begin{subarray}{l}~~~~~~~~i=0\\ ~~\alpha+\beta+\gamma+j=N+1\\ j,\alpha+\beta+\gamma>0,\alpha,\beta,\gamma\geq0\end{subarray}}
  \mu_{B,0}\circ(\phi_{j}\circ\alpha_{A}\otimes\mu_{B,\alpha}\circ(\phi_{\beta}\otimes\phi_{\gamma}))\\[1pt]
  \nonumber&&+\label{mairie13}\sum_{\begin{subarray}{l}~~i,j>0\\ \alpha=\beta=\gamma=0\\i+j=N+1\end{subarray}}
  \mu_{B,i}\circ(\phi_{j}\circ\alpha_{A}\otimes\mu_{B,0}\circ(\phi\otimes\phi))
\\[1pt]
\nonumber&&+\label{mairie14}\sum_{\begin{subarray}{l}i+j+\alpha+\beta+\gamma=N+1\\~~~i,j,\alpha+\beta+\gamma>0\end{subarray}}
\mu_{B,i}\circ(\phi_{j}\circ\alpha_{A}\otimes\mu_{B,\alpha}\circ(\phi_{\beta}\otimes\phi_{\gamma})).
\end{eqnarray}
\end{small}
The sum on the right hand side of \eqref{mairie2} is $\sum'\limits_{\begin{subarray}{l}\lambda+\mu=k\\1\leq\mu\leq k\end{subarray}}
=\sum\limits_{\begin{subarray}{l}j+\lambda+\mu=N+1\\~~~~~i=0\\ ~~j,\mu>0,\lambda\geq0\end{subarray}}+
\sum\limits_{\begin{subarray}{l}i+\lambda+\mu=N+1,j=0\\ ~~~~~i,\mu >0\\~~~~\lambda+\mu=k\\~~~~~\lambda\geq0 \end{subarray}}
+\sum\limits_{\begin{subarray}{l}~~~\mu,i,j>0\\ \lambda+\mu=k,\lambda\geq0\end{subarray}}$

\begin{eqnarray}
\nonumber
 \eqref{mairie11}&=&
 \sum_{\begin{subarray}{l}i+\alpha+\beta+\gamma=N+1\\~~~i,\alpha+\beta+\gamma>0\\~~~~\alpha,\beta,\gamma\geq0\end{subarray}}
  \mu_{B,i}(\phi\circ\alpha_{A}\otimes\mu_{B,\alpha}\circ(\phi_{\beta}\otimes\phi_{\gamma}))\\[1pt]
  \label{mairie111}&=&\sum_{\begin{subarray}{l}i+\alpha=N+1\\~~i,\alpha>0\end{subarray}}
  \mu_{B,i}\circ(\phi\circ\alpha_{A}\otimes\mu_{B,\alpha}\circ(\phi\otimes\phi))\\[1pt]
  \nonumber\label{mairie112}&+&\sum_{\begin{subarray}{l}i+\alpha+\beta+\gamma=N+1\\~~~~i,\beta+\gamma>0\\~~~~ \alpha,\beta,\gamma\geq0\end{subarray}}
\mu_{B,i}\circ(\phi\circ\alpha_{A}\otimes\mu_{A,\alpha}\circ(\phi_{\beta}\otimes\phi_{\gamma})).
\end{eqnarray}
The sum \eqref{daguere1}+\eqref{mairie111} is equal to $\mathcal{O}b_{B}\circ\phi$.
In other hand, we have
\begin{eqnarray}
\nonumber
  \sum_{p+q=N+1}\phi_{p}\circ\mu_{A,q}\circ(\mu_{A,0}\otimes\alpha_{A})&=&
  \sum_{\begin{subarray}{l}p+q=N+1\\~~p,q>0\end{subarray}}\phi_{i}\circ\mu_{A,q}\circ(\alpha_{A}\otimes\mu_{A,0})\\[1pt]
  \label{porte1}&+&\sum_{\begin{subarray}{l}i+j+k=N+1\\~~i,k>0,j\geq0\end{subarray}}
  \phi_{i}\mu_{A,j}\circ(\alpha_{A}\otimes\mu_{A,k})\\[1pt]
  \label{porte2}&-&\sum_{\begin{subarray}{l}p+j+k=N+1\\~~p,k>0,j\geq0\end{subarray}}
  \phi_{p}\circ\mu_{A,j}\circ(\mu_{A,k}\otimes\alpha_{A}).
\end{eqnarray}
The first sum of the right hand side vanishes with the second last sum of
$\delta^{2}(\phi_{p}\circ\mu_{A,q})$.
\begin{eqnarray}
\label{porte11} \eqref{porte1}=&&\sum_{k=1}^{N}[\sum_{\begin{subarray}{l}i+j=N+1-k\\~~~~i,j\geq0\end{subarray}}
 \phi_{i}\mu_{A,j}\circ(\alpha_{A}\otimes\mu_{A,k})]\\[1pt]
 \label{porte12}&&-\phi[\sum_{\begin{subarray}{l}j+k=N+1\\~~~j,k>0\end{subarray}}
 \mu_{A,j}\circ(\alpha_{A}\otimes\mu_{A,k})].
\end{eqnarray}

\begin{eqnarray}
  \label{porte21}\eqref{porte2}=&&\sum_{k=1}^{N}[\sum_{i+j=N+1-k}\phi_{i}\mu_{A,j}(\mu_{A,k}\otimes\alpha_{A})]\\[1pt]
  \label{porte22}&&+\phi[\sum_{\begin{subarray}{l}j+k=N+1\\~~~j,k>0\end{subarray}}\mu_{A,j}\circ(\mu_{A,k}\otimes\alpha_{A})].
\end{eqnarray}
The sum \eqref{porte12}+\eqref{porte22} is equal to $\phi\circ\mathcal{O}b_{A}$.
Furtheremore
\begin{eqnarray}
 \nonumber
  \eqref{porte11}&=&\sum_{k=1}^{N}[\sum_{\begin{subarray}{l}\beta+\alpha+\gamma+k=N+1\\ ~~~\alpha,\beta,\gamma\geq0\end{subarray}}
  \mu_{B,\alpha}(\phi_{\beta}\circ\alpha_{A}\otimes\phi_{\gamma}\circ\mu_{A,k})]\\[1pt]
  \label{porte111}&=&\sum_{\begin{subarray}{l}\gamma+k=N+1\\ ~~~\gamma,k>0\end{subarray}}\mu_{B,0}(\phi\circ\alpha_{A}\otimes\phi_{\gamma}\circ\mu_{A,k})\\[1pt]
  \label{porte112}&+&\sum'_{\begin{subarray}{l}\lambda+\mu=k\\ 1\leq\mu\leq q\\ ~~\lambda\geq 0\end{subarray}}\mu_{B,i}(\phi_{j}\circ\alpha_{A}\otimes\phi_{\lambda}\circ\mu_{A,\mu}).
\end{eqnarray}
The term \eqref{porte111} vanishes with the first sum of $\delta^{2}(\phi_{p}\mu_{A,q})$ and
\eqref{porte112} vanishes with \eqref{mairie2}.\\
Also
\begin{eqnarray*}
 \eqref{porte21}&=&\sum_{k=1}^{N}[\sum_{i+j=N+1-k}\phi_{i}\mu_{A,j}(\mu_{A,k}\otimes\alpha_{A})]=
 -\sum_{\begin{subarray}{l}k+i+j=N+1\\~~~~ k>0 \end{subarray}}\phi_{i}\mu_{A,j}(\mu_{A,k}\otimes\alpha_{A})\\[1pt]
 &=&-\sum_{\begin{subarray}{l}\beta+\gamma+k+\alpha=N+1\\~~ \alpha,\beta,\gamma\geq0,k>0\end{subarray}}
 \mu_{B,\alpha}(\phi_{\beta}\circ\mu_{A,k}\otimes\phi_{\gamma}\circ\alpha_{A})\\[1pt]
 &=&-\sum_{\begin{subarray}{l}\beta+k=N+1\\ ~~~k>0 \end{subarray}}\mu_{B,0}(\phi_{\beta}\circ\mu_{A,k}\otimes\phi\circ\alpha_{A})
 -\sum'_{\begin{subarray}{l}\lambda+\mu=j\\ 1\leq\mu\leq j\end{subarray}}
 \mu_{B,i}(\phi_{\lambda}\circ\mu_{A,\mu}\otimes\phi_{k}\circ\alpha_{A}).
\end{eqnarray*}
The second last sum vanishes with the last sum of $\delta^{2}(\phi_{p}\mu_{A,q})$ and
the last sum vanishes with \eqref{clair2}.\\
The remaining terms can be written in the form of one sum as follows
$$\widetilde{\sum}[\mu_{B,\lambda}(\phi_{\alpha}\circ\alpha\otimes\mu_{\mu}(\phi_{\beta}
\otimes\phi_{\gamma}))-\mu_{B,\lambda}(\mu_{B,\mu}(\phi_{\alpha}\otimes\phi_{\beta}).
\otimes\phi_{\gamma}\circ\alpha))]$$
By  Hom-associativity, it equals $0$.
\end{proof}
\begin{corollary}
If $H^{3}_{Hom}(\phi,\phi)=0$, then every infinitesimal deformation can be extended to a
formal deformation of larger order.
\end{corollary}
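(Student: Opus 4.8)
The plan is to argue by induction on the order of the deformation, using the preceding Theorem as the inductive step. Starting from an infinitesimal deformation, which by Proposition \ref{2cocycle} is an order-one deformation whose infinitesimal $\theta_{1}=(\mu_{A,1},\mu_{B,1},\phi_{1})$ is a $2$-cocycle in $C_{Hom}^{2}(\phi,\phi)$, I would assume inductively that $\theta_{1}$ has been extended to a deformation $\Theta_{t}=\sum_{i=0}^{N}\theta_{i}t^{i}$ of some order $N\geq 1$, and then show that one further coefficient $\theta_{N+1}=(\mu_{A,N+1},\mu_{B,N+1},\phi_{N+1})$ can always be adjoined so as to produce a deformation of order $N+1$.

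The heart of the matter is the obstruction. By the Theorem above, the obstruction $3$-cochain $\mathcal{O}b=(\mathcal{O}b_{\mathcal{A}},\mathcal{O}b_{\mathcal{B}},\mathcal{O}b_{\phi})$ attached to $\Theta_{t}$ lies in $Z_{Hom}^{3}(\phi,\phi)$, and the computation in the proof of that Theorem identifies the relation $\delta^{2}(\mu_{A,N+1},\mu_{B,N+1},\phi_{N+1})=\mathcal{O}b$ with precisely the degree-$(N+1)$ component of the deformation equation for $\Theta_{t}+t^{N+1}\theta_{N+1}$. Hence the order-$N$ deformation extends to order $N+1$ if and only if $\mathcal{O}b$ is a $3$-coboundary, equivalently if and only if its class $[\mathcal{O}b]$ vanishes in $H_{Hom}^{3}(\phi,\phi)$.

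Now I would invoke the hypothesis $H_{Hom}^{3}(\phi,\phi)=0$. This forces $Z_{Hom}^{3}(\phi,\phi)=B_{Hom}^{3}(\phi,\phi)$, so that every $3$-cocycle, and in particular the obstruction $\mathcal{O}b$, is automatically a coboundary. Thus there exists a $2$-cochain $\eta\in C_{Hom}^{2}(\phi,\phi)$ with $\delta^{2}\eta=\mathcal{O}b$; taking $\theta_{N+1}=\eta$ (equivalently, solving componentwise for $\mu_{A,N+1}$, $\mu_{B,N+1}$ and $\phi_{N+1}$) yields a deformation of order $N+1$ extending $\Theta_{t}$. Iterating this construction produces coefficients $\theta_{n}$ for every $n$, and the resulting formal series $\Theta_{t}=\sum_{n\geq 0}\theta_{n}t^{n}$ is a formal deformation of $\phi$ prolonging the original infinitesimal deformation.

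The step I expect to demand the most care is the inductive identification $\delta^{2}\theta_{N+1}=\mathcal{O}b$: one must check that the three components of the coboundary relation correspond exactly to the three pieces of the deformation equation (the $\alpha$-associator conditions for $\mu_{A}$ and for $\mu_{B}$, together with the morphism-compatibility condition for $\phi$), and that the cochain $\eta$ supplied by the vanishing of $H_{Hom}^{3}(\phi,\phi)$ may indeed be chosen as the new triple $\theta_{N+1}$ without altering the lower-order data already fixed. Once this bookkeeping is settled, the corollary is an immediate consequence of the Theorem.
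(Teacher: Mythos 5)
Your argument is correct and is exactly the reasoning the paper intends: the corollary is stated without proof as an immediate consequence of the preceding obstruction theorem, and your inductive scheme (obstruction $\mathcal{O}b\in Z^{3}_{Hom}(\phi,\phi)$ at each order, $H^{3}_{Hom}(\phi,\phi)=0$ forcing $Z^{3}_{Hom}=B^{3}_{Hom}$, hence $\delta^{2}\theta_{N+1}=\mathcal{O}b$ solvable, iterated to all orders) is precisely that consequence spelled out. Nothing further is needed beyond the sign bookkeeping you already flag, which the paper's proof of the theorem settles.
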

\subsection{Deformations of Hom-Lie algebra morphisms}
In this section, we discuss deformations of Hom-Lie algebra morphisms.

\subsubsection{Infinitesimal Deformations}
\begin{defn}
Let $(\mathcal{L},[\cdot, \cdot],\alpha)$ be a Hom-Lie algebra. A one-parameter formal
Hom-Lie deformation of $\mathcal{L}$ is given by a $\mathbb{K}[\![t]\!]$-bilinear map
$[\cdot, \cdot]_{t}:\mathcal{L}[\![t]\!]\times\mathcal{L}[\![t]\!]\rightarrow\mathcal{L}[\![t]\!]$
of the form
$$[\cdot, \cdot]_{t}=\sum\limits_{i\geq0}t^{i}[\cdot, \cdot]_{i}$$
where each $[\cdot, \cdot]_{i}$ is a bilinear map $[\cdot, \cdot]_{i}:\mathcal{L}\times\mathcal{L}\rightarrow\mathcal{L}$
(extended to  $\mathbb{K}[\![t]\!]$-bilinear map), $ [\cdot, \cdot]=[\cdot, \cdot]_{0}$ and satisfying the following
conditions
$$
[x,y]_{t}=-[y,x]_{t} ~\hbox{for all} ~x,y\in \mathcal{L}\quad \hbox{(skew-symmetry)},$$
$$\circlearrowleft_{x,y,z}[\alpha(x),[y,z]_{t}]_{t}=0 \quad \hbox{(Hom-Jacobi identity)}$$
\end{defn}
\begin{defn}
Let $\phi:\mathcal{L}\rightarrow \mathcal{L}'$ be a Hom-Lie algebra morphism.
A deformation of $\phi $ is to  a triple $\Theta_{t}=([\cdot, \cdot]_{t};[\cdot, \cdot]_{t}';\phi_{t})$
in which :
\begin{itemize}
    \item $[\cdot, \cdot]_{t}=\sum\limits_{i\geq0}t^{i}[\cdot, \cdot]_{i}$ is a deformation of $\mathcal{L}$,
    \item $[\cdot, \cdot]_{t}'=\sum\limits_{i\geq0}t^{i}[\cdot, \cdot]_{i}'$ is a deformation of $\mathcal{L}'$,
    \item $\phi_{t}:\mathcal{L}[\![t]\!]\rightarrow \mathcal{L}'[\![t]\!]$ is a
    Hom-Lie algebra morphism of the form $\phi_{t}=\sum\limits_{n\geq0}\phi_{n}t^{n}$
    where each $\phi_{n} :\mathcal{L}\rightarrow \mathcal{L}'$ is a $\mathbb{K}$-linear map and
    $\phi_{0}=\phi$.
\end{itemize}
\end{defn}
\begin{prop}\label{2cocycles}
The linear coefficient, $\theta_{1}=([\cdot, \cdot]_{1}, [\cdot, \cdot]'_{1},\phi_{1})$,
which is called the infinitesimal of the deformation $\Theta_{t}$,
is a $2$-cocycle in $C^{2}_{HL}(\phi,\phi)$.
\end{prop}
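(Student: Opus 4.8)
The plan is to verify $\delta^{2}\theta_{1}=0$ one component at a time, exactly as in the Hom-associative case (Proposition \ref{2cocycle}). Writing out the morphism coboundary of Section~3.2 for $n=2$ gives
$$\delta^{2}(\varphi_{1},\varphi_{2},\varphi_{3})=\bigl(\delta^{2}_{HL}\varphi_{1},\,\delta^{2}_{HL}\varphi_{2},\,\delta^{1}_{HL}\varphi_{3}-\phi\circ\varphi_{1}+\varphi_{2}\diamond\phi\bigr),$$
so for $\theta_{1}=([\cdot,\cdot]_{1},[\cdot,\cdot]'_{1},\phi_{1})$ I must establish the three identities $\delta^{2}_{HL}[\cdot,\cdot]_{1}=0$, $\delta^{2}_{HL}[\cdot,\cdot]'_{1}=0$, and $\delta^{1}_{HL}\phi_{1}-\phi\circ[\cdot,\cdot]_{1}+[\cdot,\cdot]'_{1}\diamond\phi=0$. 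First I note that $\theta_{1}$ genuinely lies in $C^{2}_{HL}(\phi,\phi)$: since $\alpha$ and $\alpha'$ are fixed along the deformation and $\phi_{t}$, $[\cdot,\cdot]_{t}$, $[\cdot,\cdot]'_{t}$ satisfy the relevant twisting relations order by order, each coefficient $[\cdot,\cdot]_{1}$, $[\cdot,\cdot]'_{1}$, $\phi_{1}$ is a Hom-cochain in the sense of \eqref{condition cochain}.

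For the first two components I would expand the Hom-Jacobi identity of the deformed bracket. Substituting $[\cdot,\cdot]_{t}=\sum_{i\geq0}t^{i}[\cdot,\cdot]_{i}$ into $\circlearrowleft_{x,y,z}[\alpha(x),[y,z]_{t}]_{t}=0$ and collecting the coefficient of $t$ yields
$$\circlearrowleft_{x,y,z}[\alpha(x),[y,z]_{1}]+\circlearrowleft_{x,y,z}[\alpha(x),[y,z]]_{1}=0,$$
the first cyclic sum using the undeformed bracket as the outer operation (the adjoint action) and the second using $[\cdot,\cdot]_{1}$ as the outer operation. I would then write out $\delta^{2}_{HL}[\cdot,\cdot]_{1}$ from \eqref{alpha} with $n=2$: the term $\sum_{i}(-1)^{i}[\alpha(x_{i}),[\cdot,\cdot]_{1}(\ldots)]$ reproduces the first cyclic sum, while the term $\sum_{i<j}(-1)^{i+j}[\cdot,\cdot]_{1}([x_{i},x_{j}],\alpha(x_{k}))$ reproduces the second after using skew-symmetry of $[\cdot,\cdot]_{1}$ to move $\alpha(x_{k})$ into the first slot. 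Hence $\delta^{2}_{HL}[\cdot,\cdot]_{1}$ equals the order-$t$ part of the Hom-Jacobi equation, which is zero; the identical computation in $\mathcal{L}'$ gives $\delta^{2}_{HL}[\cdot,\cdot]'_{1}=0$.

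For the third component I would expand the morphism deformation equation $\phi_{t}([x,y]_{t})=[\phi_{t}(x),\phi_{t}(y)]'_{t}$. Collecting the coefficient of $t$ gives
$$\phi([x,y]_{1})+\phi_{1}([x,y])=[\phi(x),\phi(y)]'_{1}+[\phi_{1}(x),\phi(y)]'+[\phi(x),\phi_{1}(y)]'.$$
On the other hand, evaluating $\delta^{1}_{HL}\phi_{1}$ via \eqref{phi} with $n=1$ gives $\delta^{1}_{HL}\phi_{1}(x,y)=[\phi(x),\phi_{1}(y)]'-[\phi(y),\phi_{1}(x)]'-\phi_{1}([x,y])$, while $(\phi\circ[\cdot,\cdot]_{1})(x,y)=\phi([x,y]_{1})$ and $([\cdot,\cdot]'_{1}\diamond\phi)(x,y)=[\phi(x),\phi(y)]'_{1}$. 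Substituting these and using skew-symmetry of $[\cdot,\cdot]'$ to rewrite $-[\phi(y),\phi_{1}(x)]'=[\phi_{1}(x),\phi(y)]'$, the expression $\delta^{1}_{HL}\phi_{1}-\phi\circ[\cdot,\cdot]_{1}+[\cdot,\cdot]'_{1}\diamond\phi$ becomes precisely the difference of the two sides of the order-$t$ morphism equation above, hence vanishes.

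I expect the main obstacle to be purely bookkeeping: matching the sign pattern and argument ordering of the Chevalley--Eilenberg coboundary \eqref{alpha} (and of \eqref{phi}) against the coefficients produced by expanding the two deformation equations, where repeated use of skew-symmetry is needed to bring every term into a common normal form. No deeper input is required beyond the Hom-Jacobi identity, the morphism relation $\phi\circ[\cdot,\cdot]=[\cdot,\cdot]'\circ(\phi\otimes\phi)$, and the $\alpha$-compatibility \eqref{condition cochain}; the argument is the Hom-analogue of Fr\'egier's computation and parallels Proposition \ref{2cocycle} line for line.
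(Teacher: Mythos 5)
Your proposal is correct and takes essentially the same route as the paper: the paper likewise expands the morphism deformation equation $\phi_{t}([x,y]_{t})=[\phi_{t}(x),\phi_{t}(y)]'_{t}$, collects the coefficient of $t$ (the $s=1$ equation), and identifies the result with the vanishing of $\delta^{1}_{HL}\phi_{1}-\phi\circ[\cdot,\cdot]_{1}+[\cdot,\cdot]'_{1}\diamond\phi$, exactly as you do. The only difference is one of completeness rather than method: the paper leaves the first two components $\delta^{2}_{HL}[\cdot,\cdot]_{1}=\delta^{2}_{HL}[\cdot,\cdot]'_{1}=0$ (standard from the Hom-Lie deformation theory of \cite{aem,ms4}) and the $\alpha$-compatibility of $\theta_{1}$ implicit, whereas you verify them explicitly via the order-$t$ Hom-Jacobi identity — a harmless and in fact welcome addition.
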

\begin{proof}
Let $\phi$ be a Hom-Lie algebra morphism, we have the following deformation equation
$$[\phi_{t}(x),\phi_{t}(y)]_{t}'=\phi_{t}([x,y]_{t}),$$
which may be written
$\sum\limits_{i,j\geq0}t^{i+j}\phi_{i}([x,y]_{j})=\sum\limits_{i,j,k\geq 0}
t^{i+j+k}[\phi_{i}(x),\phi_{j}(y)]'_{k}.$
It  is equivalent to the following infinite system of equations
$$\sum_{i=0}^{s}\phi_{i}([x,y]_{s-i}(x,y)=\sum_{i,j\geq0; \; i+j\leq s}[\phi_{i}(x),\phi_{j}(y)]'_{s-i-j},~~for
~s=0,1,2,\cdots$$
For $s=1$, we have
 $$\phi([x,y]_{1})-[\phi(x),\phi(y)]'_{1}-
(-\phi_{1}([x,y]_{0})+[\phi_{1}(x),\phi(y)]'_{0}+ [\phi(x),\phi_{1}(y)]'_{0})=0.$$
This is equivalent  to the $2$-cochain
$\delta^{1}\phi_{1}(x,y)-\phi([x,y]_{1})+[\phi(x),\phi(y)]'_{1}
$
 equals  $0$.
\end{proof}
\subsubsection{Equivalent Deformations}
Let $(\mathcal{L},[\cdot, \cdot],\alpha)$ be a multiplicative Hom-Lie algebra. Let  $\mathcal{L}_{t}=
(\mathcal{L},[\cdot, \cdot]_{t},\alpha)$ and $\mathcal{L}'_{t}=(\mathcal{L},[\cdot, \cdot]_{t}',\alpha)$ be  two deformations
of $\mathcal{L}$, where $[\cdot, \cdot]_{t}=\sum_{i\geq0}t^{i}[\cdot, \cdot]_{i}$
and $[\cdot, \cdot]_{t}'=\sum_{i\geq0}t^{i}[\cdot, \cdot]'_{i}$ with $[\cdot, \cdot]_{0}=[\cdot, \cdot]_{0}'=[\cdot, \cdot]$.
We say that $\mathcal{L}_{t}$ and $\mathcal{L}'_{t}$ are equivalent if there exists a formal
automorphism $\psi_{t}:\mathcal{L}[\![t]\!]\rightarrow \mathcal{L}[\![t]\!]$, that may be written in the form $\psi_{t}=\sum_{i\geq0}
\psi_{i}t^{i}$ where $\psi_{i}\in End(\mathcal{L})$ and $\psi_{0}=id$, such that
$\psi_{t}([x,y]_{t})=[\psi_{t}(x),\psi_{t}(y)]'_{t}.$\\
A deformation $\mathcal{L}_{t}$ is said to be trivial if and only if $\mathcal{L}_{t}$
is equivalent to $\mathcal{L}$.
\begin{defn}Let $(\mathcal{L},[\cdot, \cdot]_{\mathcal{L}},\alpha)$, $(\mathcal{G},[\cdot, \cdot]_{\mathcal{G}},\beta)$ be two Hom-Lie
algebras and $\phi:\mathcal{L}\rightarrow\mathcal{G}$ be a Hom-Lie algebra mprphism. 
Let $\Theta_{t}=([\cdot, \cdot]_{\mathcal{L},t},[\cdot, \cdot]_{\mathcal{G},t},\phi_{t})$ and $\widetilde{\Theta}=
([\cdot, \cdot]_{\mathcal{L},t}',[\cdot, \cdot]_{\mathcal{G},t}',\widetilde{\phi}_{t})$ be two deformations of
a Hom-Lie algebra morphism $\phi$.\\
A formal automorphism $\psi_{t}:\Theta_{t}\rightarrow\widetilde{\Theta}_{t}$ is
a pair $(\psi_{\mathcal{L},t},\psi_{\mathcal{G},t})$, where $\psi_{\mathcal{L},t}:
\mathcal{L}[\![t]\!]\rightarrow \mathcal{L}[\![t]\!]$
and $\psi_{\mathcal{G},t}:\mathcal{G}[\![t]\!]\rightarrow \mathcal{G}[\![t]\!]$ are
formal automorphisms, such that
$\widetilde{\phi}_{t}=\psi_{\mathcal{L},t}\circ \phi_{t}\circ \psi^{-1}_{\mathcal{G},t}$.\\
$\bullet$ Two deformation $\Theta_{t}$ and $\widetilde{\Theta}_{t}$ are equivalent
if and only if there exists a formal automorphism  that transforms $\theta_{t}$ in to $\widetilde{\theta}_{t}$,  $\theta_{t}\rightarrow\widetilde{\theta}_{t}$.\\
$\bullet$ Given a deformation $\Theta_{t}$ and a pair of power series
$\psi_{t}=(\psi_{\mathcal{L},t}=\sum\limits_{n}\psi_{\mathcal{L},n}t^{n},\psi_{\mathcal{G},t}
=\sum\limits_{n}\psi_{\mathcal{G},n}t^{n})$,
one can define a deformation $\widetilde{\Theta}_{t}$. The deformation $\widetilde{\Theta}_{t}$
is automatically equivalent to $\Theta_{t}$.
\end{defn}
\begin{prop}\label{deformation equivalentes}
If $[\cdot, \cdot]_{t}$ and $[\cdot, \cdot]_t'$ are equivalent deformations of
$\mathcal{L}$ given by the automorphism $\psi_{t}:\mathcal{L}[\![t]\!]\rightarrow \mathcal{L}[\![t]\!]$,
the infinitesimals of $[\cdot, \cdot]_t$ and $[\cdot, \cdot]_t'$ belong to the same cohomology class.
\end{prop}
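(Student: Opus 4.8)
The plan is to mimic the classical deformation argument: extract the linear term of the equivalence equation and recognise it as a Chevalley--Eilenberg coboundary. By hypothesis there is a formal automorphism $\psi_{t}=\sum_{i\geq0}\psi_{i}t^{i}$ with $\psi_{0}=\mathrm{id}$ that intertwines the two deformations, so that $\psi_{t}([x,y]_{t})=[\psi_{t}(x),\psi_{t}(y)]_{t}'$ for all $x,y\in\mathcal{L}$. First I would expand both sides as formal power series in $t$, writing the left-hand side as $\sum_{i,j\geq0}t^{i+j}\psi_{i}([x,y]_{j})$ and the right-hand side as $\sum_{i,j,k\geq0}t^{i+j+k}[\psi_{i}(x),\psi_{j}(y)]_{k}'$.

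Next I would collect the coefficients of $t^{1}$ on each side. Using $\psi_{0}=\mathrm{id}$ and $[\cdot,\cdot]_{0}=[\cdot,\cdot]_{0}'=[\cdot,\cdot]$, the linear coefficient on the left is $[x,y]_{1}+\psi_{1}([x,y])$, while on the right it is $[x,y]_{1}'+[\psi_{1}(x),y]+[x,\psi_{1}(y)]$. Equating these and rearranging yields
$$[x,y]_{1}-[x,y]_{1}'=[\psi_{1}(x),y]+[x,\psi_{1}(y)]-\psi_{1}([x,y]).$$

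The decisive step is then to identify the right-hand side with $\delta^{1}_{HL}\psi_{1}$. Specialising formula \eqref{alpha} to $n=1$, where $\alpha^{n-1}=\mathrm{id}$, gives $\delta^{1}_{HL}\psi_{1}(x,y)=[x,\psi_{1}(y)]-[y,\psi_{1}(x)]-\psi_{1}([x,y])$, and after applying skew-symmetry to the middle term this is exactly the right-hand side above. Hence $[\cdot,\cdot]_{1}-[\cdot,\cdot]_{1}'=\delta^{1}_{HL}\psi_{1}$. Before concluding I would check that $\psi_{1}$ is a legitimate cochain in $\widetilde{C}^{1}_{\alpha,\alpha'}(\mathcal{L},\mathcal{L})$, namely that $\alpha\circ\psi_{1}=\psi_{1}\circ\alpha$; this comes from comparing the linear terms of the compatibility relation $\psi_{t}\circ\alpha=\alpha\circ\psi_{t}$ satisfied by a formal automorphism. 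It then follows that $[\cdot,\cdot]_{1}-[\cdot,\cdot]_{1}'\in B^{2}_{HL}(\mathcal{L},\mathcal{L})$, so the two infinitesimals determine the same class in $H^{2}_{HL}(\mathcal{L},\mathcal{L})$.

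I do not expect any genuine obstacle here; the only points demanding care are the sign bookkeeping when matching the rearranged linear term against the explicit coboundary in \eqref{alpha}, and the routine verification that $\psi_{1}$ lies in the correct space of Hom-cochains.
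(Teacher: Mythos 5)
Your proposal is correct: expanding $\psi_{t}([x,y]_{t})=[\psi_{t}(x),\psi_{t}(y)]'_{t}$, collecting the coefficient of $t$, and matching $[\psi_{1}(x),y]+[x,\psi_{1}(y)]-\psi_{1}([x,y])$ with the $n=1$ case of \eqref{alpha} (after one use of skew-symmetry) is exactly the standard argument that the paper leaves implicit here — it states the proposition without proof, the associative analogue being attributed to \cite{aem}, where the same linear-term extraction is used. Your extra check that $\alpha\circ\psi_{1}=\psi_{1}\circ\alpha$, so that $\psi_{1}\in\widetilde{C}^{1}_{\alpha,\alpha}(\mathcal{L},\mathcal{L})$ and $\delta^{1}_{HL}\psi_{1}$ is a legitimate coboundary, is a point the paper glosses over and is worth keeping.
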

\begin{theorem}
The infinitesimal of a deformation $\Theta_{t}$ of $\phi$ is a $2$-cocycle in
$C^{2}_{HL}(\phi,\phi)$ whose cohomology class is determined by the equivalence class
of the first term of $\Theta_{t}$.
\end{theorem}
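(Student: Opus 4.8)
The plan is to mirror the structure of Theorem \ref{theoreme}, the analogous statement for Hom-associative algebra morphisms, since the proof in the Hom-Lie setting follows the same two-part template. The statement asserts two things: first that the infinitesimal $\theta_1 = ([\cdot,\cdot]_1, [\cdot,\cdot]_1', \phi_1)$ is a $2$-cocycle in $C^2_{HL}(\phi,\phi)$, and second that its cohomology class depends only on the equivalence class of the deformation. The first claim is exactly Proposition \ref{2cocycles}, which I would invoke directly; hence the work lies entirely in establishing the second claim.

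For the cohomology-class invariance, I would start from a formal automorphism $\psi_t = (\psi_{\mathcal{L},t}, \psi_{\mathcal{G},t})$ realizing the equivalence $\Theta_t \to \widetilde{\Theta}_t$, and analyze its effect componentwise. First I would treat the bracket components: applying Proposition \ref{deformation equivalentes} to each of $\psi_{\mathcal{L},t}$ and $\psi_{\mathcal{G},t}$ shows that the infinitesimals $[\cdot,\cdot]_1$ and $\widetilde{[\cdot,\cdot]}_1$ (respectively the primed versions) differ by a coboundary, i.e.\ $\delta^1 \psi_{*,1} = [\cdot,\cdot]_{*,1} - \widetilde{[\cdot,\cdot]}_{*,1}$ for $*$ denoting either $\mathcal{L}$ or $\mathcal{G}$. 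This handles the first two slots of the triple.

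The remaining step, and the one I expect to be the main obstacle, is the morphism slot. I would expand the defining relation $\widetilde{\phi}_t = \psi_{\mathcal{G},t}\, \phi_t\, \psi^{-1}_{\mathcal{L},t}$ as a power series in $t$ and extract the coefficient of $t^1$. Using $\psi_{*,0} = \mathrm{id}$ and $\phi_0 = \phi$, this should yield an identity of the schematic form $\phi_1 - \widetilde{\phi}_1 = \phi\circ\psi_{\mathcal{L},1} - \psi_{\mathcal{G},1}\circ\phi$ (up to the sign conventions built into the morphism component of $\delta^n$ in Section $3.2$, where the third coordinate carries a factor $(-1)^{n-1}$ and the term $\phi\circ\varphi_1 - \varphi_2\diamond\phi$). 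The delicate point is checking that the combined triple $\gamma = (\psi_{\mathcal{L},1}, \psi_{\mathcal{G},1}, 0) \in C^1_{HL}(\phi,\phi)$ satisfies $\delta^1\gamma = \theta_1 - \widetilde{\theta}_1$ with the signs matching the coboundary formula exactly in all three slots simultaneously; this requires care because the Hom-Lie coboundary on the morphism factor involves the $\diamond$ product rather than ordinary composition, and one must verify the degree-$1$ sign $(-1)^{n-1}$ specializes correctly.

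Once this identity is verified, the conclusion is immediate: $\theta_1$ and $\widetilde{\theta}_1$ differ by the coboundary $\delta^1\gamma$, hence represent the same class in $H^2_{HL}(\phi,\phi)$, which is precisely the assertion that the cohomology class of the infinitesimal is determined by the equivalence class of $\Theta_t$. I would close by noting, as in the associative case, that this combined with Proposition \ref{2cocycles} completes the proof.
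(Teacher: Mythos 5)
Your proposal is correct and takes essentially the same route as the paper, whose proof of this theorem consists of the single line ``Same proof as for Theorem \ref{theoreme}'': invoke Proposition \ref{2cocycles} for the cocycle claim, apply Proposition \ref{deformation equivalentes} to the two bracket slots, and extract the coefficient of $t$ in the conjugation relation to see that the $1$-cochain $(\psi_{\mathcal{L},1},\psi_{\mathcal{G},1},0)$ cobounds $\theta_{1}-\widetilde{\theta}_{1}$, exactly as you outline. Your sign check on the morphism slot is right (the factor $(-1)^{n-1}$ is $+1$ at $n=1$, so the third component of $\delta^{1}(\psi_{\mathcal{L},1},\psi_{\mathcal{G},1},0)$ is $\phi\circ\psi_{\mathcal{L},1}-\psi_{\mathcal{G},1}\circ\phi=\phi_{1}-\widetilde{\phi}_{1}$), and you even write the conjugation in the correct order $\widetilde{\phi}_{t}=\psi_{\mathcal{G},t}\,\phi_{t}\,\psi_{\mathcal{L},t}^{-1}$, silently fixing a typo in the paper's definition, which has $\mathcal{L}$ and $\mathcal{G}$ swapped.
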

\begin{proof}
Same proof as for Theorem \ref{theoreme}.
\end{proof}
\begin{defn}
Let $(\mathcal{L},[\cdot, \cdot]_{\mathcal{L}},\alpha)$ and $(\mathcal{G},[\cdot, \cdot]_{\mathcal{G}},\beta)$ be
two Hom-Lie algebras, and $\phi_{1}$ be an element of $Z^{1}_{HL}(\mathcal{L},\mathcal{G})$, the $1$-cocycle
is said to be integrable if there exists a family $(\phi_{t})_{t\geq 0}$ such that $\phi_{t}=\sum_{i\geq 0}
t^{i}\phi_{i}$ defines a formal deformation $\phi_{t}$ of $\phi$.
\end{defn}
\begin{theorem}
Let $(\mathcal{L},[\cdot, \cdot]_{\mathcal{L}},\alpha)$ and $(\mathcal{G},[\cdot, \cdot]_{\mathcal{G}},\beta)$ be two Hom-Lie
algebras. Let $\Theta_{t}=([\cdot, \cdot]_{\mathcal{L},t},[\cdot, \cdot]_{\mathcal{G},t},\phi_{t})$ be a deformation of a
Hom-Lie algebra morphism $\phi$.
Then there exists an equivalent deformation $\widetilde{\Theta}_{t}=([\cdot, \cdot]_{\mathcal{L},t}',
[\cdot, \cdot]_{\mathcal{G},t},\widetilde{\phi})$ such that
$\widetilde{\theta}_{1}\in Z_{HL}^{2}(\phi,\phi)$ and $\widetilde{\theta}_{1}
\not\in B_{HL}^{2}(\phi,\phi)$. Hence, if $H_{HL}^{2}(\phi,\phi)=0$ then every formal deformation is
equivalent to a trivial deformation.
\end{theorem}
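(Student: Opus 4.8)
The plan is to transport the proof of Theorem~\ref{theoreme} and of the Hom-associative rigidity statement preceding it to the Chevalley--Eilenberg side, replacing the Gerstenhaber data by the Nijenhuis--Richardson bracket and the coboundary \eqref{phi}. By Proposition~\ref{2cocycles} the infinitesimal $\theta_{1}=([\cdot,\cdot]_{1},[\cdot,\cdot]'_{1},\phi_{1})$ is a $2$-cocycle in $C^{2}_{HL}(\phi,\phi)$, so it defines a class $[\theta_{1}]\in H^{2}_{HL}(\phi,\phi)$. The first task is to prove that this class is an invariant of the equivalence class of $\Theta_{t}$; the stated rigidity is then a formal consequence.

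To prove invariance I would take a formal automorphism $\psi_{t}=(\psi_{\mathcal{L},t},\psi_{\mathcal{G},t})$ as in Definition~\ref{DefMorphEquiv}, with linear terms $\psi_{\mathcal{L},1},\psi_{\mathcal{G},1}$, and let $\widetilde{\Theta}_{t}$ be the deformation it produces. Applying Proposition~\ref{deformation equivalentes} separately to the deformations of $\mathcal{L}$ and of $\mathcal{G}$ shows that the first two components of $\theta_{1}$ and $\widetilde{\theta}_{1}$ differ exactly by $\delta^{1}_{HL}\psi_{\mathcal{L},1}$ and $\delta^{1}_{HL}\psi_{\mathcal{G},1}$. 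For the morphism component I would expand both sides of $\widetilde{\phi}_{t}=\psi_{\mathcal{G},t}\circ\phi_{t}\circ\psi^{-1}_{\mathcal{L},t}$ and read off the coefficient of $t$, obtaining $\phi_{1}-\widetilde{\phi}_{1}=\phi\circ\psi_{\mathcal{L},1}-\psi_{\mathcal{G},1}\circ\phi$, the exact analogue of \eqref{equivalence}. Comparing with the third slot of the coboundary formula for $\delta^{1}$ evaluated at the $1$-cochain $(\psi_{\mathcal{L},1},\psi_{\mathcal{G},1},0)\in C^{1}_{HL}(\phi,\phi)$ then gives $\delta^{1}(\psi_{\mathcal{L},1},\psi_{\mathcal{G},1},0)=\theta_{1}-\widetilde{\theta}_{1}$, so that $[\theta_{1}]=[\widetilde{\theta}_{1}]$. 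In particular one may always normalise $\widetilde{\theta}_{1}$ to a chosen representative of its class; when $[\theta_{1}]\neq0$ this representative lies in $Z^{2}_{HL}(\phi,\phi)\setminus B^{2}_{HL}(\phi,\phi)$, which is the first assertion.

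For the rigidity conclusion I would argue by induction on the order of the lowest surviving term. If $H^{2}_{HL}(\phi,\phi)=0$ then $\theta_{1}$ is a coboundary, say $\theta_{1}=\delta^{1}(\psi_{\mathcal{L},1},\psi_{\mathcal{G},1},0)$; using $(\psi_{\mathcal{L},1},\psi_{\mathcal{G},1})$ as the linear part of a formal automorphism produces an equivalent deformation with $\widetilde{\theta}_{1}=0$. The Hom-Lie analogue of the earlier ``$\theta_{i}=0$ for $i\le n$ forces $\theta_{n+1}\in Z^{2}$'' statement shows that the next surviving term of a partially trivialised deformation is again a $2$-cocycle, hence again a coboundary, hence removable by a formal automorphism of the form $\mathrm{id}+t^{n+1}(\cdots)$. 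Composing these automorphisms, which converges in the $t$-adic topology since their corrections start in ever higher degree, transports $\Theta_{t}$ to a deformation all of whose positive-order terms vanish, i.e.\ to the trivial deformation.

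The main obstacle will be the inductive cocycle step: verifying in the coupled morphism setting that the lowest nonzero term $\widetilde{\theta}_{n+1}$ is closed in $C^{2}_{HL}(\phi,\phi)$. This requires extracting the degree-$(n+1)$ part of the three deformation equations---the two Hom-Jacobi identities for $[\cdot,\cdot]_{t}$ and $[\cdot,\cdot]'_{t}$ together with the compatibility $[\phi_{t}(x),\phi_{t}(y)]'_{t}=\phi_{t}([x,y]_{t})$---and checking that all cross terms reorganise, via Hom-Jacobi and the relation $\phi\circ[\cdot,\cdot]=[\cdot,\cdot]'\circ(\phi\otimes\phi)$, into $\delta^{n}_{HL}$ of a single $2$-cochain. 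The delicate bookkeeping is the $\diamond$-product and the signs in the third slot; once these match the computation in the proof that $\delta^{n+1}_{HL}\circ\delta^{n}_{HL}=0$, the remaining steps are formal and identical to those of Theorem~\ref{theoreme}.
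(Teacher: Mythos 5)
Your proposal is correct and is essentially the paper's own argument: the paper establishes this Hom-Lie theorem only implicitly, by transporting the Hom-associative case (Theorem \ref{theoreme}, its rigidity consequence, and the order-by-order trivialization theorem), and your steps --- Proposition \ref{2cocycles}, Proposition \ref{deformation equivalentes} applied componentwise, the analogue of \eqref{equivalence} giving $\delta^{1}(\psi_{\mathcal{L},1},\psi_{\mathcal{G},1},0)=\theta_{1}-\widetilde{\theta}_{1}$, and the $t$-adic induction on the lowest surviving term --- coincide with that template, including the delicate inductive cocycle step which the paper handles (for the associative case) inside the proof of Proposition \ref{2cocycle}. You also correct, as one must, the paper's non-composable formula $\widetilde{\phi}_{t}=\psi_{\mathcal{L},t}\circ\phi_{t}\circ\psi^{-1}_{\mathcal{G},t}$ to $\widetilde{\phi}_{t}=\psi_{\mathcal{G},t}\circ\phi_{t}\circ\psi^{-1}_{\mathcal{L},t}$.
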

\subsubsection{Obstructions}
A deformation of order $N$ of $\phi$ is  a triple, $\Theta_{t}=([\cdot, \cdot]_{\mathcal{L},t}
,[\cdot, \cdot]_{\mathcal{G},t},\phi_{t})$
satisfying  $\phi_{t}([x,y]_{\mathcal{L},t})=[\phi_{t}(x),\phi_{t}(y)]_{\mathcal{G},t}$
or equivalently
$\sum\limits_{i=0}^{N}\phi_{i}([x,y]_{\mathcal{L},N-i})=\sum_{i+j+k=N}[\phi_{i}(x),
\phi_{j}(y)]_{\mathcal{G},k}.$\\
Given a deformation $\Theta_{t}$ of order $N$, it  extends to a deformation of order $N+1$
if and only if there exists a $2$-cochain $\theta_{N+1}=([\cdot, \cdot]_{\mathcal{L},N+1},[\cdot, \cdot]_
{\mathcal{G},N+1},\phi_{N+1})
\in C_{HL}^{2}(\phi,\phi)$ such that $\overline{\Theta}_{t}=\Theta_{t}+t^{N+1}\theta_{N+1}$
is a deformation of order $N+1$.  Then
$\overline{\Theta}_{t}$ is said to be  an extension of $\Theta_{t}$ of order $N+1$.\\
Let  $\mathcal{O}b_{\mathcal{L}}=\frac{1}{2}\sum\limits_{p+q=N+1,p>0,q>0}[[\cdot, \cdot]_{\mathcal{L},p},
[\cdot, \cdot]_{\mathcal{L},q}]_{\alpha}^{\wedge}$ (resp.
$\mathcal{O}b_{\mathcal{G}}=\frac{1}{2}\sum\limits_{p+q=N+1,p>0,q>0}[[\cdot, \cdot]_{\mathcal{G},p},[\cdot, \cdot]_
{\mathcal{G}q,}]_{\alpha}^{\wedge}]$)  be
the obstruction of a deformation of the Hom-Lie algebra $\mathcal{L}$ (resp. $\mathcal{G}$ ) and\\
$\mathcal{O}b_{\phi}=\sum_{i=0}^{N+1}
\phi_{i}([x,y]_{\mathcal{L},N+1-i})-\sum'[\phi_{i}(x),\phi_{j}(y)]_{\mathcal{G},k}$
 be the obstruction of the extension of the
Hom-Lie algebra morphism $\phi$, where
$$\begin{array}{llcl}\sum'=\sum\limits_{\begin{subarray}{l} i+j=N+1\\
~~i,j>0\\~~~k=0
\end{subarray}}+\sum\limits_{\begin{subarray}{l} i+k=N+1\\
~~i,k>0\\~~~j=0
\end{subarray}}+\sum\limits_{\begin{subarray}{l} j+k=N+1\\
~~j,k>0\\~~~i=0
\end{subarray}}
\sum\limits_{\begin{subarray}{l} i+k+j=N+1\\
~~~i,k,j>0
\end{subarray}}.
\end{array}$$
\begin{thm}
Let $(\mathcal{L},[\cdot, \cdot]_{\mathcal{L},0},\alpha)$ and $(\mathcal{G},[\cdot, \cdot]_{\mathcal{G},0},\beta)$ be
two Hom-Lie algebras and $\phi:\mathcal{L}\rightarrow \mathcal{G}$ be  a Hom-Lie algebra morphism. Let $\Theta_{t}=([\cdot, \cdot]_{\mathcal{L},t},[\cdot, \cdot]_{\mathcal{G},t},\phi_{t})$ be an order
$k$ one-parameter formal deformation of $\phi$. Then $\mathcal{O}b=(\mathcal{O}b_{\mathcal{A}},\mathcal{O}b_{\mathcal{B}},
\mathcal{O}b_{\phi})\in Z_{HL}^{3}(\phi,\phi)$.\\
Therefore, the deformation extends to a deformation of order $k+1$ if and only if
$\mathcal{O}b$ is a coboundary.
\end{thm}
\begin{proof}
We must show that $(\mathcal{O}b_{\mathcal{L}},
\mathcal{O}b_{\mathcal{G}},\mathcal{O}b_{\phi})$ is
a cocycle in $C_{HL}^{3}(\phi,\phi)$ that is
$\delta^{3}(\mathcal{O}b_{\mathcal{L}},\mathcal{O}b_{\mathcal{G}},
\mathcal{O}b_{\phi})=0$ i.e. $(\delta^{3}\mathcal{O}b_{\mathcal{L}},
\delta^{3}\mathcal{O}b_{\mathcal{G}},\delta^{2}\mathcal{O}b_{\phi}+
\phi\mathcal{O}b_{\mathcal{L}}-\mathcal{O}b_{\mathcal{G}}\phi)=0$.
One has  $\delta^{3}\mathcal{O}b_{\mathcal{L}}=0$ and
$\delta^{3}\mathcal{O}b_{\mathcal{G}}=0$,  then it remains to show
that $\delta^{2}\mathcal{O}b_{\phi}+\phi\mathcal{O}b_{\mathcal{L}}
-\mathcal{O}b_{\mathcal{G}}\phi=0$.\\
We have $$\small{(-\sum\limits_{p+q=s,p,q>0}\circlearrowleft_{x,y,z}[\alpha(x),[y,z]_{p}]_{q}
=\frac{1}{2}\sum\limits_{p+q=s,p>0,q>0}[[\cdot, \cdot]_{p},[\cdot, \cdot]_{q}]_{\alpha}^{\wedge}(x,y,z))}$$
\begin{eqnarray}
&\nonumber\delta^{2}\left(\sum_{i=1}^{N}
\phi_{i}([\cdot, \cdot]_{\mathcal{L},N+1-i}-\sum'[\phi_{i},\phi_{j}]_{\mathcal{G},k})\right)(x,y,z)
\nonumber\label{1}=-[\phi(\alpha(x)),\sum'[\phi_{i}(y),\phi_{j}(z)]_{\mathcal{G},k}]_{\mathcal{G},0}\\[1pt]
\nonumber\label{2}&+[\phi(\alpha(y)),\sum'[\phi_{i}(x),\phi_{j}(z)]_{\mathcal{G},k}]_{\mathcal{G},0}
\nonumber\label{3}-[\phi(\alpha(z)),\sum'[\phi_{i}(x),\phi_{j}(y)]_{\mathcal{G},k}]_{\mathcal{G},0}\\[1pt]
\label{4}&+\sum'[\phi_{i}([x,y]_{\mathcal{L},0}),\phi_{j}(\alpha(z))]_{\mathcal{G},k}\\[1pt]
\label{5}&-\sum'[\phi_{i}([x,z]_{\mathcal{L},0}),\phi_{j}(\alpha(y))]_{\mathcal{G},k}\\[1pt]
\label{6}&+\sum'[\phi_{i}([y,z]_{\mathcal{L},0}),\phi_{j}(\alpha(x))]_{\mathcal{G},k}\\[1pt]
\label{7}&+[\phi(\alpha(x)),\sum_{i=1}^{N}\phi_{i}([y,z]_{\mathcal{L},N+1-i}))]_{\mathcal{G},0}\\[1pt]
\label{8}&-[\phi(\alpha(y)),\sum_{i=1}^{N}\phi_{i}([x,z]_{\mathcal{L},N+1-i}))]_{\mathcal{G},0}\\[1pt]
\label{9}&+[\phi(\alpha(z)),\sum_{i=1}^{N}\phi_{i}([x,y]_{\mathcal{L},N+1-i}))]_{\mathcal{G},0}\\[1pt]
\label{10}&-\sum_{i=1}^{N}\phi_{i}([[x,y]_{\mathcal{L},0},\alpha(z)]_{\mathcal{L},N+1-i})\\[1pt]
\label{11}&+\sum_{i=1}^{N}\phi_{i}([[x,z]_{\mathcal{L},0},\alpha(y)]_{\mathcal{L},N+1-i})\\[1pt]
\label{12}&-\sum_{i=1}^{N}\phi_{i}([[y,z]_{\mathcal{L},0},\alpha(x)]_{\mathcal{L},N+1-i}).
\end{eqnarray}
Using the fact that
$$\begin{array}{lcl}
\phi_{i}([x,y]_{\mathcal{L},0})&=&\sum\limits_{\begin{subarray}{l} \beta+\alpha+\gamma=i\\
\alpha,\beta,\gamma>0\end{subarray}}
[\phi_{\beta}(x),\phi_{\gamma}(z)]_{\mathcal{G},\alpha}
-\sum\limits_{\begin{subarray}{l} \lambda+\mu=i\\
1\leq\mu\leq i\end{subarray}}\phi_{\lambda}([x,z]_{\mathcal{L},\mu}),
\end{array}$$
then \eqref{5} becomes
\begin{eqnarray}
  \label{A}-\sum'[\phi_{i}([x,z]_{\mathcal{L},0}),\phi_{j}(\alpha(y))]_{\mathcal{G},k}&=&
 -\sum'_{\begin{subarray}{l}\alpha+\beta+\gamma=i\\ \alpha,\beta,\gamma\geq0
  \end{subarray}}[[\phi_{\beta}(x),\phi_{\gamma}(z)]_{\mathcal{G},\alpha},\phi_{j}(\alpha(y))]_{\mathcal{G},k}\\[1pt]
 \label{B}&+&\sum'_{\begin{subarray}{l}\lambda+\mu=i\\ 1\leq\mu\leq i\end{subarray}}
  [\phi_{\lambda}([x,z]_{\mathcal{L},\mu}),\phi_{j}(\alpha(y))]_{\mathcal{G},k},
\end{eqnarray}
where the sum on the right hand side of \eqref{A} is\\
\begin{equation}\label{somme1}\sum'_{\begin{subarray}{l}\alpha+\beta+\gamma=i\\ \alpha,\beta,\gamma\geq0
\end{subarray}}=\sum\limits_{\begin{subarray}{l}\alpha+\beta+\gamma,k>0\\ ~\alpha,\beta,\gamma\geq0\\~~~j=0
\end{subarray}}+\sum\limits_{\begin{subarray}{l}\alpha+\beta+\gamma,j>0\\ ~\alpha,\beta,\gamma\geq0\\
~~~k=0\end{subarray}}
+\sum\limits_{\begin{subarray}{l}\alpha=\beta=\gamma=0\\ ~~j,k>0
\end{subarray}}
+\sum\limits_{\begin{subarray}{l}\alpha+\beta+\gamma,k,j>0\\ ~~~\alpha,\beta,\gamma\geq0
\end{subarray}}\end{equation}
and \eqref{B} is of the form
\begin{equation*}
\sum'_{\begin{subarray}{l}\lambda+\mu=i\\ 1\leq\mu\leq i\end{subarray}}=
\sum\limits_{\begin{subarray}{l}\lambda+\mu+k=N+1\\~k,\mu>0,\lambda\geq0\\ ~~~j=0\end{subarray}}
+\sum\limits_{\begin{subarray}{l}\lambda+\mu+j=N+1\\ ~j,\mu>0,\lambda\geq0\\ ~~~k=0\end{subarray}}
+\sum\limits_{\begin{subarray}{l}\lambda+\mu+j+k=N+1\\ ~j,\mu,k>0,\lambda\geq0\end{subarray}}.
\end{equation*}
The first term of \eqref{somme1} can be written as
\begin{align}
&&-\sum\limits_{\begin{subarray}{l}\alpha+\beta+\gamma,k>0\\ ~\alpha,\beta,\gamma\geq0\\~~~j=0
\end{subarray}}[[\phi_{\beta}(x),\phi_{\gamma}(z)]_{\mathcal{G},\alpha},\phi(\alpha(y))]_{\mathcal{G},k}
\label{*}=-\sum\limits_{\begin{subarray}{l}k+\alpha=N+1\\~~~k,\alpha>0\end{subarray}}[[\phi(x),\phi(z)]
 _{\mathcal{G},\alpha},\phi(\alpha(y))]_{\mathcal{G},k}\\[1pt]
\nonumber
&&- \label{T}\sum\limits_{\begin{subarray}{l}\beta+\gamma+\alpha+k=N+1\\~~~k,\beta+\gamma>0\\ ~~~\alpha,\beta,\gamma\geq0\end{subarray}}
[[\phi_{\beta}(x),\phi_{\gamma}(z)]_{\mathcal{G},\alpha},\phi(\alpha(y))]_{\mathcal{G},k}.
\end{align}
The term \eqref{*} vanishes with the second term of $-\mathcal{O}b_{\mathcal{G}}\circ\phi$.
We may write \eqref{4} as follows
\begin{eqnarray}
  \label{C}\sum'[\phi_{i}([x,y]_{\mathcal{L},0}),\phi_{j}(\alpha(z))]_{\mathcal{G},k}&=&
 \sum'_{\begin{subarray}{l}\alpha+\beta+\gamma=i\\ \alpha,\beta,\gamma\geq0
  \end{subarray}}[[\phi_{\beta}(x),\phi_{\gamma}(y)]_{\mathcal{G},\alpha},\phi_{j}(\alpha(z))]_{\mathcal{G},k}\\[1pt]
 \label{D}&-&\sum'_{\begin{subarray}{l}\lambda+\mu=i\\ 1\leq\mu\leq i\end{subarray}}
  [\phi_{\lambda}([x,y]_{\mathcal{L},\mu}),\phi_{j}(\alpha(z))]_{\mathcal{G},k},
\end{eqnarray}
where the sum on the right hand side of \eqref{C} is\\
\begin{equation}\label{somme3}
\begin{array}{llcl}\sum'_{\begin{subarray}{l}\alpha+\beta+\gamma=i\\ \alpha,\beta,\gamma\geq0
\end{subarray}}=\sum\limits_{\begin{subarray}{l}\alpha+\beta+\gamma,k>0\\ ~\alpha,\beta,\gamma\geq0\\~~~j=0
\end{subarray}}+\sum\limits_{\begin{subarray}{l}\alpha+\beta+\gamma,j>0\\ ~\alpha,\beta,\gamma\geq0\\
~~~k=0\end{subarray}}
+\sum\limits_{\begin{subarray}{l}\alpha=\beta=\gamma=0\\ ~~j,k>0
\end{subarray}}
+\sum\limits_{\begin{subarray}{l}\alpha+\beta+\gamma,k,j>0\\ ~~~\alpha,\beta,\gamma\geq0
\end{subarray}}\end{array}.\end{equation}
The first term of \eqref{somme3} can be written as
\begin{align}
&\sum\limits_{\begin{subarray}{l}\alpha+\beta+\gamma,k>0\\ ~\alpha,\beta,\gamma\geq0\\~~~j=0
  \end{subarray}}[[\phi_{\beta}(x),\phi_{\gamma}(y)]_{\mathcal{G},\alpha},\phi(\alpha(z))]_{\mathcal{G},k}
 \label{**}=\sum\limits_{\begin{subarray}{l}k+\alpha=N+1\\~~~k,\alpha>0\end{subarray}}[[\phi(x),\phi(y)]
  _{\mathcal{G},\alpha},\phi(\alpha(z))]_{\mathcal{G},k}\\[1pt]
\nonumber
\label{M}&+ \sum\limits_{\begin{subarray}{l}\beta+\gamma+\alpha+k=N+1\\~~~k,\beta+\gamma>0\\ ~~~\alpha,\beta,\gamma\geq0\end{subarray}}
  [[\phi_{\beta}(x),\phi_{\gamma}(y)]_{\mathcal{G},\alpha},\phi(\alpha(z))]_{\mathcal{G},k}.
\end{align}
The term \eqref{**} vanishes with the first term of $-\mathcal{O}b_{\mathcal{G}}\circ\phi$.
We can write \eqref{6} as follows
\begin{eqnarray}
\label{E}\sum'[\phi_{i}([y,z]_{\mathcal{L},0}),\phi_{j}(\alpha(x))]_{\mathcal{G},k}&=&
\sum'_{\begin{subarray}{l}\alpha+\beta+\gamma=i\\ \alpha,\beta,\gamma\geq0
\end{subarray}}[[\phi_{\beta}(y),\phi_{\gamma}(z)]_{\mathcal{G},\alpha},\phi_{j}(\alpha(x))]_{\mathcal{G},k}\\[1pt]
\label{F}&-&\sum'_{\begin{subarray}{l}\lambda+\mu=i\\ 1\leq\mu\leq i\end{subarray}}
[\phi_{\lambda}([y,z]_{\mathcal{L},\mu}),\phi_{j}(\alpha(x))]_{\mathcal{G},k},
\end{eqnarray}
where the sum on the right hand side of \eqref{E} is\\
\begin{equation}\label{somme4}
\sum'_{\begin{subarray}{l}\alpha+\beta+\gamma=i\\ \alpha,\beta,\gamma\geq0
\end{subarray}}=\sum\limits_{\begin{subarray}{l}\alpha+\beta+\gamma,k>0\\ ~\alpha,\beta,\gamma\geq0\\~~~j=0
\end{subarray}}+\sum\limits_{\begin{subarray}{l}\alpha+\beta+\gamma,j>0\\ ~\alpha,\beta,\gamma\geq0\\
~~~k=0\end{subarray}}
+\sum\limits_{\begin{subarray}{l}\alpha=\beta=\gamma=0\\ ~~j,k>0
\end{subarray}}
+\sum\limits_{\begin{subarray}{l}\alpha+\beta+\gamma,k,j>0\\ ~~~\alpha,\beta,\gamma\geq0
  \end{subarray}}.\end{equation}
The first term of \eqref{somme4} can be written as
\begin{align}
&&\sum\limits_{\begin{subarray}{l}\alpha+\beta+\gamma,k>0\\ ~\alpha,\beta,\gamma\geq0\\~~~j=0
  \end{subarray}}[[\phi_{\beta}(y),\phi_{\gamma}(z)]_{\mathcal{G},\alpha},\phi(\alpha(x))]_{\mathcal{G},k}
 \label{***}=\sum\limits_{\begin{subarray}{l}k+\alpha=N+1\\~~~k,\alpha>0\end{subarray}}[[\phi(y),\phi(z)]
_{\mathcal{G},\alpha},\phi(\alpha(x))]_{\mathcal{G},k}\\[1pt]
\nonumber
&&+ \label{K}\sum\limits_{\begin{subarray}{l}\beta+\gamma+\alpha+k=N+1\\~~~k,\beta+\gamma>0\\ ~~~\alpha,\beta,\gamma\geq0\end{subarray}}
[[\phi_{\beta}(y),\phi_{\gamma}(z)]_{\mathcal{G},\alpha},\phi(\alpha(x))]_{\mathcal{G},k}.
\end{align}
The term \eqref{***} vanishes with the third term of $-\mathcal{O}b_{\mathcal{G}}\circ\phi$.\\
On the other hand, one
uses the fact that
\begin{equation*}
 \circlearrowleft_{x,y,z} \bigg(\sum\limits_{i=0}^{N}[[x,z]_{\mathcal{L},0},\alpha(y)]_{\mathcal{L},N+1-i}+
 \sum\limits_{j=0}^{N}[[x,z]_{\mathcal{L},N+1-j},\alpha(y)]_{\mathcal{L},j}\bigg)=0,
\end{equation*}
then
\begin{eqnarray}
\nonumber && \eqref{10}+\eqref{11}+\eqref{12}=-\sum_{i=1}^{N}\phi_{i}([[x,y]_{\mathcal{L},0},\alpha(z)]_{\mathcal{L},N+1-i})\\[1pt]
\nonumber&&+\sum_{i=1}^{N}\phi_{i}([[x,z]_{\mathcal{L},0},\alpha(y)]_{\mathcal{L},N+1-i})
-\sum_{i=1}^{N}\phi_{i}([[y,z]_{\mathcal{L},0},\alpha(x)]_{\mathcal{L},N+1-i})\\[1pt]
\nonumber&=&-\sum\limits_{i=1}^{N}\circlearrowleft_{x,y,z}\left(\phi_{i}
([[x,y]_{\mathcal{L},0},\alpha(z)]_{\mathcal{L},N+1-i})\right)
=\sum\limits_{\begin{subarray}{l}i+j+k=N+1\\~~i,j>0,k\geq0\end{subarray}}\circlearrowleft_{x,y,z}
\phi_{i}\big([[x,y]_{\mathcal{L},j},\alpha(z)]_{\mathcal{L},k}\big)\\[1pt]
\label{somme}&=&\sum\limits_{j=1}^{N}\circlearrowleft_{x,y,z}\big
(\sum\limits_{\begin{subarray}{l}i+k=N+1-j\\~~~~i,k\geq0\end{subarray}}
\phi_{i}([[x,y]_{\mathcal{L},j},\alpha(z)]_{\mathcal{L},k})\big)\\[1pt]
\label{rotation}&&-\phi\bigg(\sum\limits_{\begin{subarray}{l}j+k=N+1\\~~~j,k>0\end{subarray}}
\circlearrowleft_{x,y,z}[[x,y]_{\mathcal{L},j},\alpha(z)]_{\mathcal{L},k}\bigg)\bigg).
\end{eqnarray}
The term \eqref{rotation} vanishes with $\phi\circ\mathcal{O}b_{\mathcal{L}}$.
The term \eqref{somme} may be  written as
\begin{align}
\nonumber&\circlearrowleft_{x,y,z}\sum\limits_{j=1}^{N}\bigg
(\sum\limits_{\begin{subarray}{l}i+k=N+1-j\\~~~~i,k\geq0\end{subarray}}
\phi_{i}([[x,y]_{j},\alpha(z)]_{\mathcal{L},k}\bigg)
\\[1pt]
\nonumber&
=\sum\limits_{j=1}^{N}\circlearrowleft_{x,y,z}\bigg
(\sum\limits_{\begin{subarray}{l}\beta+\gamma+\alpha=N+1-j\\ ~~~~\alpha,\beta,\gamma\geq0\end{subarray}}
[\phi_{\beta},([x,y]_{j}),\phi_{\gamma}(\alpha(z))]_{\mathcal{G},\alpha}\bigg)\\[1pt]
\label{somme123}&=\circlearrowleft_{x,y,z}\sum\limits_{i=1}^{N}[\phi_{i}([x,y]_{\mathcal{L},N+1-i}),
\phi(\alpha(z))]_{\mathcal{G},0}\\[1pt]
\label{sommeBFD}&+\circlearrowleft_{x,y,z}\sum'\limits_{\begin{subarray}{l}\lambda+\mu+j+\alpha=N+1\\
~~~~~\lambda+\mu=j\\~~~~1\leq\mu\leq j\end{subarray}}[\phi_{\lambda}([x,y]_{\mathcal{L},\mu}),\phi_{k}(\alpha(z))]_{\mathcal{G},\alpha}.
\end{align}
The term
\eqref{sommeBFD} may be written as
\begin{eqnarray*}
&\circlearrowleft_{x,y,z}\sum'\limits_{\begin{subarray}{l}\lambda+\mu+j+\alpha=N+1\\
~~~~~\lambda+\mu=j\\~~~~1\leq\mu\leq j\end{subarray}}
[\phi_{\lambda}([x,y]_{\mathcal{L},\mu}),\phi_{k}(\alpha(z))]_{\mathcal{G},\alpha}
=\sum'\limits_{\begin{subarray}{l}\lambda+\mu+j+\alpha=N+1\\
~~~~~\lambda+\mu=j\\~~~~1\leq\mu\leq j\end{subarray}}
[\phi_{\lambda}([x,y]_{\mathcal{L},\mu}),\phi_{k}(\alpha(z))]_{\mathcal{G},\alpha}\\[1pt]
&+\sum'\limits_{\begin{subarray}{l}\lambda+\mu+j+\alpha=N+1\\
~~~~~\lambda+\mu=j\\~~~~1\leq\mu\leq j\end{subarray}}
[\phi_{\lambda}([z,x]_{\mathcal{L},\mu}),\phi_{k}(\alpha(y))]_{\mathcal{G},\alpha}
+\sum'\limits_{\begin{subarray}{l}\lambda+\mu+j+\alpha=N+1\\
~~~~~\lambda+\mu=j\\~~~~1\leq\mu\leq j\end{subarray}}
[\phi_{\lambda}([y,z]_{\mathcal{L},\mu}),\phi_{k}(\alpha(x))]_{\mathcal{G},\alpha}\\[1pt]
&=-(\eqref{B}+\eqref{D}+\eqref{F}).
\end{eqnarray*}
Then $\eqref{sommeBFD}+\eqref{B}+\eqref{D}+\eqref{F}=0$
and \eqref{somme123} may be  written as
\begin{eqnarray*}
&\circlearrowleft_{x,y,z}\sum\limits_{i=1}^{N}[\phi_{i}([x,y]_{\mathcal{L},N+1-i}),
\phi(\alpha(z))]_{\mathcal{G},0}
=\sum\limits_{i=1}^{N}[\phi_{i}([y,z]_{\mathcal{L},N+1-i}),
\phi(\alpha(x))]_{\mathcal{G},0}\\[1pt]
&+\sum\limits_{i=1}^{N}[\phi_{i}([z,x]_{\mathcal{L},N+1-i}),
\phi(\alpha(y))]_{\mathcal{G},0}
+\sum\limits_{i=1}^{N}[\phi_{i}([x,y]_{\mathcal{L},N+1-i}),
\phi(\alpha(z))]_{\mathcal{G},0}\\[1pt]
&=-(\eqref{7}+\eqref{8}+\eqref{9}).
\end{eqnarray*}
Then $\eqref{somme123}+(\eqref{7}+\eqref{8}+\eqref{9})=0$.\\
The remaining components of $\delta^{2}\mathcal{O}b_{\phi}+\phi\mathcal{O}b_{\mathcal{L}}
-\mathcal{O}b_{\mathcal{G}}\phi$ can be written as
$$\circlearrowleft_{\phi_{\beta},\phi_{\gamma},\phi_{j}}\widetilde{\sum}
[[\phi_{\beta}(x),\phi_{\gamma}(y)]_{\mathcal{G},\alpha},\beta(\phi_{j}(z))]_{\mathcal{G},k}=0,$$
where
$\widetilde{\sum}=\sum\limits_{\begin{subarray}{l}j+\beta=N+1\\ ~~~\beta,j>0\end{subarray}}
+\sum\limits_{\begin{subarray}{l}j+\gamma=N+1\\ ~~~\gamma,j>0\end{subarray}}
+\sum\limits_{\begin{subarray}{l}\gamma+\beta=N+1\\ ~~~\beta,\gamma>0\end{subarray}}
+\sum\limits_{\begin{subarray}{l}j+\beta+\gamma=N+1\\~~~ \beta,j,\gamma>0\end{subarray}}
+\sum\limits_{\begin{subarray}{l}j+\beta+\gamma+j+\alpha=N+1\\~~~1\leq\beta+\gamma+j\leq N\\
~~~~~\alpha,\beta,\gamma,j,k\geq 0\end{subarray}}.
$\\
Then $\delta^{2}\mathcal{O}b_{\phi}+\phi\mathcal{O}b_{\mathcal{L}}
-\mathcal{O}b_{\mathcal{G}}\phi=0$.\\
One has moreover
$\delta^{2}([\cdot, \cdot]_{\mathcal{L},N+1},[\cdot, \cdot]_{\mathcal{G},N+1},\phi_{N+1})=
(\mathcal{O}b_{\mathcal{L}},\mathcal{O}b_{\mathcal{G}},\mathcal{O}b_{\phi})$.
Then, the order $N$ formal deformation extends to an order $N+1$ formal deformation
whenever $(\mathcal{O}b_{\mathcal{L}},\mathcal{O}b_{\mathcal{G}},\mathcal{O}b_{\phi})$
is a coboundary in $C_{HL}^{3}(\phi,\phi)$.
\end{proof}
\section{Example}
We compute in this section    a   cohomology of a given Hom-Lie algebra morphism and  discuss some deformations.
Let $\mathfrak{g}_{1}=(\mathfrak{g}_{1},[\cdot,\cdot]_{1},\alpha_1)$ and $\mathfrak{g}_{2}=(\mathfrak{g}_{2},[\cdot,\cdot]_{2},\alpha_2)$ be two Hom-Lie algebras defined with respect to the basis $\{e_{1},e_{2},e_{3}\}$
(resp. $\{f_{1},f_{2},f_{3}\}$) by 
$$\mathfrak{g}_{1}:\small{\left\{\begin{array}{llll}
& [e_{1},e_{2}]_{1}=e_{3}\\
& [e_{2},e_{3}]_{1}=0\\
& [e_{1},e_{3}]_{1}=0
\end{array}\right.} \  \alpha_{1}=\small{\begin{pmatrix}&p_{1}&0&0\\&0&p_{2}&0\\&0&0&p_{1}p_{2}
\end{pmatrix}},
\ \mathfrak{g}_{2}:\small{\left\{\begin{array}{llll}
&[f_{1},f_{2}]_{2}=f_{1}+f_{3}\\
&[f_{2},f_{3}]_{2}=f_{2}\\
&[f_{1},f_{3}]_{2}=f_{1}+2f_{3}
\end{array}\right.} \ \alpha_{2}=\small{\begin{pmatrix}&1&0&0\\&0&2&0\\&0&0&2
\end{pmatrix}},$$
where $ p_{1},p_{2},a,b,c,d$ are parameters.\\
Let $\phi_{1,2}:\mathfrak{g}_{1}\rightarrow \mathfrak{g}_{2}$ be a  Hom-Lie algebra morphism.
The morphism $\phi_{1,2}$ is wholly determined by a set of structure constants $\lambda_{i,j}$.
It turns out that we have the following cases\\
$$if ~p_{1}=p_{2}=2\qquad\left\{\begin{array}{ll}
&\phi_{1,2}^{1}(e_{1})=\lambda_{2,1}f_{2}
+\lambda_{3,1}f_{3}\\
&\phi_{1,2}^{1}(e_{2})=\lambda_{2,2}f_{2}+\frac{\lambda_{2,2}\lambda_{3,1}}{\lambda_{2,1}}f_{3}
\\
&\phi_{1,2}^{1}(e_{3})=0
\end{array}\right.$$
and $$~if ~p_{1}=2 ~and ~p_{2}=0 \qquad \left\{\begin{array}{ll}
&\phi_{1,2}^{2}(e_{1})=\lambda_{2,1}f_{2}
+\lambda_{3,1}f_{3}\\
&\phi_{1,2}^{2}(e_{2})=0\\
&\phi_{1,2}^{2}(e_{3})=0
\end{array}\right.$$

We come now to the computation of $\mathfrak{g}_{1}$ cohomology. A $2$-cochain is given by a triple
$(\psi,\varphi,\rho),$ where $\psi:\mathfrak{g}_{1}\times\mathfrak{g}_{1}\rightarrow\mathfrak{g}_{1}$,
$ \varphi:\mathfrak{g}_{2}\times\mathfrak{g}_{2}\rightarrow\mathfrak{g}_{2}$,
$\rho:\mathfrak{g}_{1}\rightarrow\mathfrak{g}_{2}$. The  $2$-cochains of the Hom-Lie
algebras $\mathfrak{g}_{1}$ are defined by $\psi_{i},i=1,\cdots ,8$,  \\
\begin{small}
 $$\left\{\begin{array}{lll}&\psi_{1}(e_{1},e_{2})=a_{1}e_{2}\\
&\psi_{1}(e_{2},e_{3})=0\\
&\psi_{1}(e_{1},e_{3})=0
\end{array}\right.\quad\left\{\begin{array}{lll}&\psi_{2}(e_{1},e_{2})=a_{2}e_{3}\\
&\psi_{2}(e_{2},e_{3})=0\\
&\psi_{2}(e_{1},e_{3})=0
\end{array}\right.\quad\left\{\begin{array}{lll}&\psi_{3}(e_{1},e_{2})=0\\
&\psi_{3}(e_{2},e_{3})=a_{3}e_{3}\\
&\psi_{3}(e_{1},e_{3})=0
\end{array}\right.\quad\left\{\begin{array}{lll}&\psi_{4}(e_{1},e_{2})=0\\
&\psi_{4}(e_{2},e_{3})=0\\
&\psi_{4}(e_{1},e_{3})=a_{4}e_{2}
\end{array}\right.$$

$$
\left\{\begin{array}{lll}&\psi_{5}(e_{1},e_{2})=0\\
&\psi_{5}(e_{2},e_{3})=0\\
&\psi_{5}(e_{1},e_{3})=a_{5}e_{3}
\end{array}\right.\quad\left\{\begin{array}{lll}&\psi_{6}(e_{1},e_{2})=a_{6}e_{1}\\
&\psi_{6}(e_{2},e_{3})=0\\
&\psi_{6}(e_{1},e_{3})=0
\end{array}\right.\quad\left\{\begin{array}{lll}&\psi_{7}(e_{1},e_{2})=0\\
&\psi_{7}(e_{2},e_{3})=a_{7}e_{1}\\
&\psi_{7}(e_{1},e_{3})=0
\end{array}\right.\quad\left\{\begin{array}{lll}&\psi_{8}(e_{1},e_{2})=0\\
&\psi_{8}(e_{2},e_{3})=-\frac{p_{2}}{p_{1}}a_{8}e_{2}\\
&\psi_{8}(e_{1},e_{3})=a_{8}e_{1}
\end{array}\right.
$$
\end{small}
where $a_{1},\cdots,a_{8}$ are  parameters. \\
We obtain the following results
\begin{enumerate}
  \item If $p_{1}=0$ then $Z^{2}_{HL}(\mathfrak{g}_{1},\mathfrak{g}_{1})
=\big<\psi_{2},\psi_{3},\psi_{5},\psi_{6},\psi_{7}\big>$. Hence, dim$Z^{2}_{HL}(\mathfrak{g}_{1},\mathfrak{g}_{1})=5$.
  \item If $p_{1}=1$ and $p_{2}\not\in\{-1,0,1\}$, $Z^{2}_{HL}(\mathfrak{g}_{1},\mathfrak{g}_{1})
=\big<\psi_{1},\psi_{2},\psi_{4},\psi_{5}\big>$, with dimension $4$.
\begin{enumerate}
  \item If $p_{2}=1$, $Z^{2}_{HL}(\mathfrak{g}_{1},\mathfrak{g}_{1})$ is generated in addition by $\{\psi_{3},\psi_{6},\psi_{7},\psi_{8}\}$.
  \item If $p_{2}=0$, $Z^{2}_{HL}(\mathfrak{g}_{1},\mathfrak{g}_{1})$ is generated in addition by $\{\psi_{3}\}$.
  \item If $p_{2}=-1$, $Z^{2}_{HL}(\mathfrak{g}_{1},\mathfrak{g}_{1})$ is generated in addition by $\{\psi_{7}\}$.
\end{enumerate}
  \item If $p_{1}=-1$ and $p_{2}\not\in\{-1,0,1\}$ then $Z^{2}_{HL}(\mathfrak{g}_{1},\mathfrak{g}_{1})
=\big<\psi_{2},\psi_{4}\big>$, with dimension $2$.
\begin{enumerate}
  \item If $p_{2}=1$, $Z^{2}_{HL}(\mathfrak{g}_{1},\mathfrak{g}_{1})$ is generated in addition by $\{\psi_{3},\psi_{6},\psi_{7}\}$.
  \item If $p_{2}=0$, $Z^{2}_{HL}(\mathfrak{g}_{1},\mathfrak{g}_{1})$ is generated in addition by $\{\psi_{1},\psi_{3},\psi_{5}\}$.
  \item If $p_{2}=-1$, $Z^{2}_{HL}(\mathfrak{g}_{1},\mathfrak{g}_{1})$ is generated in addition by $\{\psi_{7},\psi_{8}\}$.
\end{enumerate}
\item If $p_{1}\not\in\{-1,0,1\}$ and $p_{2}=\frac{1}{p_{1}}$, then $Z_{HL}^{2}(\mathfrak{g}_{1},\mathfrak{g}_{1})
=\big<\psi_{2},\psi_{8}\big>$, with dimension $2$.
\begin{enumerate}
  \item If $p_{2}=1$, $Z^{2}_{HL}(\mathfrak{g}_{1},\mathfrak{g}_{1})$ is generated by $\{\psi_{2},\psi_{3},\psi_{6},\psi_{7}\}$.
  \item If $p_{2}=0$, $Z^{2}_{HL}(\mathfrak{g}_{1},\mathfrak{g}_{1})$ is generated by $\{\psi_{1},\psi_{2},\psi_{3},\psi_{4},\psi_{5}\}$.
  \item If $p_{2}=-1$, $Z^{2}_{HL}(\mathfrak{g}_{1},\mathfrak{g}_{1})$ is generated by $\{\psi_{2},\psi_{7}\}$.
\end{enumerate}
\item If $p_{1}\neq\{-1,0,1\}$ and $p_{2}\neq\{-1,0,1\}$, such  that $p_{2}\neq\frac{1}{p_{1}}$, then $Z^{2}_{HL}(\mathfrak{g}_{1},\mathfrak{g}_{1})
=\big<\psi_{2}\big>$.
\end{enumerate}
All cochains are not coboundaries except $\psi_{2}$.
Therefore, in summary, we have
\begin{equation*}
dim~H^{2}_{HL}(\mathfrak{g}_{1},\mathfrak{g}_{1})
= \left\{
\begin{array}{llllll}
&4 \qquad \hbox{ if } p_{1}=0 \hbox{ with } a_{3}\neq a_{6}\\
&3 \quad \hbox{ if } p_{1}=1 \hbox{ and } p_{2}\not\in\{-1,0,1\} \hbox{ with } a_{1}\neq -a_{5} .\\
&7 \quad \hbox{ if } p_{1}=1 \hbox{ and } p_{2}=1 \hbox{ with } a_{1}\neq -a_{5} \hbox{ and } a_{3}\neq a_{6}.\\
&4 \quad \hbox{ if } p_{1}=1 \hbox{ and } p_{2}=0 \hbox{ with } a_{1}\neq -a_{5}.\\
&4 \quad \hbox{ if } p_{2}=-1 \hbox{ and } p_{2}=-1\hbox{ with } a_{1}\neq -a_{5}.\\[3pt]
&1 \quad \hbox{ if } p_{1}=-1 \hbox{ and } p_{2} \not\in\{-1,0,1\} \\
&4 \quad \hbox{ if } p_{1}=-1 \hbox{ and } p_{2}=1 \hbox{ with } a_{3}\neq a_{6}.\\
&4 \quad \hbox{ if } p_{1}=-1 \hbox{ and } p_{2}=0.\\
&3 \quad \hbox{ if } p_{1}=-1 \hbox{ and } p_{2}=-1.\\[3pt]
&1 \quad \hbox{ if } p_{1} \not\in\{-1,0,1\} \hbox{ and } p_{2}=\frac{1}{p_{1}}.\\
&3 \quad \hbox{ if } p_{1} \not\in\{-1,0,1\} \hbox{ and } p_{2}=1 \hbox{ with } a_{3}\neq a_{6}.\\
&4 \quad \hbox{ if } p_{1} \not\in\{-1,0,1\} \hbox{ and } p_{2}=0 \hbox{ with } a_{1}\neq -a_{5}. \\
&1 \quad \hbox{ if } p_{1} \not\in\{-1,0,1\} \hbox{ and } p_{2}=-1.\\
&0 \quad \hbox{ if } p_{1} \not\in\{-1,0,1\} \hbox{ and } p_{2}\not\in\{-1,0,1\}.
\end{array}\right.
\end{equation*}
Now, we consider  the second Lie algebra $\mathfrak{g}_{2}$. The $2$-cocycles are defined as
$$\left\{\begin{array}{lll}\varphi(f_{1},f_{2})&=k_{1}f_{2}+k_{2}f_{3}\\
\varphi(f_{2},f_{3})&=0\\
\varphi(f_{1},f_{3})&=k_{3}f_{2}-k_{1}f_{3}.
\end{array}\right.$$
Therefore,  dim$H^{2}_{HL}(\mathfrak{g}_{2},\mathfrak{g}_{2})=1$ and it is generated by
$$\left\{\begin{array}{lll}\varphi_{2}(f_{1},f_{2})&=f_{3}\\
\varphi_{2}(f_{2},f_{3})&=0\\
\varphi_{2}(f_{1},f_{3})&=0.
\end{array}\right.$$
Now, we consider the third component. Set $\rho(e_{i})=\sum_{k=1}^{3}a_{k,i}f_{k}$, we start with
the first morphism $\phi_{1,2}^{1}$. There is only one $1$-cocycle corresponding to $\psi=\psi_{2}$ and $\varphi$,
and it is  given by
$$\rho=\left\{\begin{array}{llll}
\rho(e_{1})&=(-\frac{a_{3,2}\lambda_{2,1}^{2}}{\lambda_{2,2}\lambda_{3,1}}
+\frac{a_{2,2}\lambda_{2,1}}{\lambda_{2,2}}+\frac{a_{3,1}\lambda_{2,1}}{\lambda_{3,1}})f_{2}
+a_{3,1}f_{3}\\
\rho(e_{2})&=a_{2,2}f_{2}+a_{3,2}f_{3}\\
\rho(e_{3})&=0.
\end{array}\right.$$

Therefore $H^{1}_{HL}(\mathfrak{g}_{1},\mathfrak{g}_{2})$ is $6$-dimensional.\\
For the second morphism $\phi_{1,2}^{2}$, we have 
$$\rho=\left\{\begin{array}{llll}
\rho(e_{1})&=a_{2,1}f_{2}+a_{3,1}f_{3}\\
\rho(e_{2})&=0\\
\rho(e_{3})&=0.
\end{array}\right.
$$
Therefore $H^{1}_{HL}(\mathfrak{g}_{1},\mathfrak{g}_{2})$ is $2$-dimensional.\\

Now, we provide examples of deformations. For   $\mathfrak{g}_{1}$ we consider the deformed brackets  $[~,~]_{1}'$.
$$[e_{1},e_{2}]'_{1}=e_{3};~[e_{2},e_{3}]'_{1}=0;~[e_{1},e_{3}]'_{1}=wte_{2},$$ where $w$ is a parameter, or
$$[e_{1},e_{2}]'_{1}=e_{3}+te_{2};~[e_{2},e_{3}]'_{1}=0;~[e_{1},e_{3}]'_{1}=te_{3}.$$
Let $[~,~]_{2}'$ be a deformation of $\mathfrak{g}_{2}$ defined by
$$[f_{1},f_{2}]'_{2}=af_{1}+(b+tk_{2})f_{3};~[f_{2},f_{3}]'_{2}=cf_{2};~[f_{2},f_{3}]'_{2}=df_{1}+2af_{3}$$
and $\widetilde{\phi}$ a deformation of the second morphism given by
$$\widetilde{\phi}(e_{1})=(\lambda_{2,1}+a_{2,1}t)f_{2}+(\lambda_{3,1}+a_{3,1}t)f_{3};~\widetilde{\phi}
(e_{2})=0;~\widetilde{\phi}(e_{3})=0.$$
Then $([~,~]'_{1},[~,~]'_{2},\widetilde{\phi})$ is a deformation of $\phi_{1,2}^{2}$.\\
Let $\widetilde{\phi}$ be a deformation of the first morphism, where
\begin{eqnarray*}
\widetilde{\phi}(e_{1})&=&(\lambda_{21}+(-\frac{a_{3,2}\lambda^{2}_{2,1}}{\lambda_{2,2}\lambda_{3,1}}
+\frac{a_{22}\lambda_{2,1}}{\lambda_{2,2}}+\frac{a_{3,1}\lambda_{2,1}}{\lambda_{3,1}})t)f_{2}+(\lambda_{3,1}+ta_{3,1})f_{3}\\
\widetilde{\phi}(e_{2})&=&(\lambda_{2,2}+ta_{2,2})f_{2}+(\frac{\lambda_{2,2}\lambda_{3,1}}{\lambda_{2,1}}+ta_{3,2})f_{3}\\
\widetilde{\phi}(e_{3})&=&0
\end{eqnarray*}
Then $([~,~]_{1},[~,~]'_{2},\widetilde{\phi})$ is a deformation of $\phi_{1,2}^{1}$.


\end{document}